\title{Large cliques in sparse random intersection graphs
\footnote{Supported by the 
Research Council of Lithuania (MIP-052/2010, MIP-067/2013).} }
\date{2013-09-29}
\newenvironment{proof}{\noindent{\bf Proof}}{\hspace*{\fill}$\Box$}
\newenvironment{proofof}[1]{%
\noindent {\bf Proof of #1}}%
{\hspace*{\fill}$\Box$}
\newtheorem{theorem}{Theorem}[section]
\newtheorem{lemma} [theorem] {Lemma}
\newtheorem{prop} [theorem] {Proposition}
\newtheorem{remark} [theorem] {Remark}
\def\E{{\mathbb E}\,}
\let\eps=\epsilon
\def\enddiscard{}
\long\def\discard#1\enddiscard{}
\newcommand{\pr}{\mathbb P}
\newcommand{\ca}{{\mathcal A}}
\newcommand{\cc}{{\mathcal C}}
\newcommand{\cf}{{\mathcal F}}
\newcommand{\ck}{{\mathcal K}}
\newcommand{\ii}{{\mathbb I}}
\newcommand{\I}{{\mathbb I}}
\begin{document}

\author{Valentas Kurauskas  and  Mindaugas Bloznelis \\ \\ Vilnius University}


\maketitle


\begin{abstract}
%
Given positive integers $n$ and $m$, and a probability measure $P$ on $\{0, 1, \dots, m\}$,
the random intersection graph $G(n,m,P)$ on vertex set $V = \{1,2, \dots, n\}$ and
with attribute set $W  = \{w_1, w_2, \dots, w_m\}$ is defined as follows.
Let $S_1, S_2, \dots, S_n$ be independent random subsets of $W$ such that for 
any $v \in V$ and any $S \subseteq W$ we have
$\pr( S_v = S ) = {P(|S|)} / {\binom m {|S|}}$.
The edge set of $G(n,m,P)$ consists of those pairs $\{u,v\} \subseteq V$ for 
which $S_u \cap S_v \ne \emptyset$.

We study the asymptotic order of the clique number $\omega(G(n,m,P))$ 
 of  sparse
 random intersection graphs.
For instance, for $m = \Theta(n)$ 
we show that 
the maximum clique is of 
size
\begin{displaymath}
(1-\alpha/2)^{-\alpha/2}n^{1-\alpha/2}(\ln n)^{-\alpha/2}(1+o_P(1))
\end{displaymath}
in the case where the vertex degree distribution is a
power-law with exponent $\alpha \in (1;2)$, and it is 
of 
size
$\frac {\ln n} {\ln \ln n}(1+o_P(1))$
in the case where the degree distribution has a finite variance. 
In each case 
there is
a polynomial algorithm which finds a clique of size 
$\omega(G(n,m,P)) (1-o_P(1))$. 

\end{abstract}

\bigskip

\emph{keywords: clique, random intersection graph, greedy algorithm, 
complex network, 
power-law, clustering}

\small {
  \bigskip

  corresponding author:

  Valentas Kurauskas, Faculty of Mathematics and Informatics, Didlaukio 47, LT-08303 Vilnius, Lithuania

  \bigskip

  email:

  Mindaugas Bloznelis: mindaugas.bloznelis@mif.vu.lt
  
  \smallskip

  Valentas Kurauskas: valentas@gmail.com

  }

 \newpage

\section{Introduction}
\label{sec.intro}

Bianconi and Marsili observed in 2006 \cite{bianc06} 
that ``scale-free'' real networks can have 
very large cliques;
they gave an argument suggesting that the rate of divergence is polynomial 
if the degree variance is unbounded~\cite{bianc06}.
In a more precise analysis, Janson, \L uczak and Norros \cite{jln10} 
showed exact asymptotics for the clique number in a power-law random graph model where 
edge probabilities are proportional
to the product of weights of their endpoints.

Another 
feature of a real network
that may affect  formation of cliques
is the clustering property: the probability of a link between two randomly chosen vertices 
increases dramatically after we learn 
about the presence of their common neighbour.
An interesting question is  whether and how  the clustering property
is related to the clique number.

With conditionally independent edges, 
the random graph of \cite{jln10} does 
not have the clustering
property and, therefore, can not 
explain such a relation.

In the present paper we address this question  by
showing precise asymptotics  
for the clique number of a related random intersection graph model that admits a 
tunable clustering 
coefficient and  power-law degree distribution. We find that the effect of  
clustering  on the clique number only 
shows up for the degree sequences having a 
finite variance. We note that the finite variance is a necessary, but not sufficient 
condition for the clustering coefficient 
to attain
a non-trivial value,
see 
\cite{bloznelis11} and (\ref{2013-09-19}) below.

In the language of hypergraphs, we ask what is the largest intersecting 
family in a random hypergraph on the
vertex set $[m]$, 
where $n$ identically distributed and independent hyperedges have random sizes
distributed according to $P$. 
A related problem for uniform hypergraphs was
considered by 
Balogh, Bohman and Mubayi \cite{bbm2009}. 
Although the motivation  and the approach of \cite{bbm2009} are 
different from ours, 
the result of \cite{bbm2009}  yields  the clique number, for a particular
class of random intersectipon graphs based on the subsets having the 
same (deterministic) number of elements.

The random intersection graph model was introduced by Karo\'nski, Scheinerman and 
Singer-Cohen in 1999 \cite{karonski99} and further generalised by 
Godehardt and Jaworski \cite{gj03} and others.
With appropriate parameters, it yields graphs that are
sparse \cite{deijfen, bloznelis08}, have a positive clustering coefficient 
\cite{deijfen, bloznelis11} and assortativity  \cite{bloznelis12}. 
We will consider a sequence $\{G(n)\}=\{G(n), n = 1, 2, \dots\}$ of 
random intersection graphs 
$G(n) = G(n, m, P)$, where $P = P(n)$ and $m = m(n)\to+\infty$ as $n\to+\infty$.
Let $X(n)$ denote a random variable distributed 
according to $P(n)$ and define $Y(n) := \sqrt {\frac n m} X(n)$. If 
not explicitly stated otherwise, the limits below will be taken as $n \to \infty$. In this
paper we use the standard notation $o()$, $O()$, $\Omega()$, $\Theta()$, $o_P()$, $O_P()$,
see, for example,~\cite{jlr}.
For positive sequences $(a_n)$, $(b_n)$ we write $a_n \sim b_n$ if $a_n/b_n \to 1$ 
$a_n\ll b_n$ if $a_n/b_n\to 0$.
For a sequence of events $\{\ca_n\}$, we say that $\ca_n$ occurs \emph{whp}, if $\pr(\ca_n) \to 1$.
 
 We will assume in what follows that 
\begin{equation}\label{eq.Y}
    \E Y(n) = O(1).
\end{equation}
This condition ensures that 
the expected number of edges in $G(n)$ is $O(n)$. Hence 
$G(n)$ is sparse. We remark, that if, in addition, 
$Y(n)$ converges in distribution
to an integrable random variable, say $Z$, and $\E Y(n)\to \E Z$, then
$G(n)$ has asymptotic degree distribution $Poiss(\lambda)$, where $\lambda=Z\E Z$, 
see, e.g., \cite{bloznelis11}.
In particular,
if $Y(n)$ has asymptotic square integrable distribution, then  
$G(n)$ has asymptotic square integrable degree distribution too.
Furthermore, if $Y(n)$ has a power-law asymptotic distribution, then  
$G(n)$ has asymptotic power-law degree distribution with the same exponent.

Our first result, Theorem \ref{thm.main}, shows that in the latter case the clique number
diverges polynomially.
In fact, we do not require $Y(n)$  to have a limiting power-law distribution, but consider
a 
condition that only involves the tail of $Y(n)$. Namely, we assume that for some 
$\alpha>0$  
and some slowly varying function $L$ 
there is  $0<\eps_0<0.5$ such that
for each sequence $x_n$ with $n^{1/2 - \eps_0} \le x_n \le n^{1/2+\eps_0}$ we have
\begin{equation}\label{eq.pldef2}
  \pr\left (Y(n) \ge x_n \right) \sim L(x_n) x_n^{-\alpha}.
\end{equation}
We recall that a function $L: \mathbb{R_+} \to \mathbb{R_+}$ is called \emph{slowly varying} 
if $\lim_{x \to \infty} L(t x) / L(x) = 1$ for any $t > 0$.

\begin{theorem} \label{thm.main} Let $1<\alpha<2$.
   Assume that  $\{G(n)\}$ is a sequence of random 
 intersection graphs satisfying (\ref{eq.Y}), (\ref{eq.pldef2}).
   Suppose that for some $\beta > \max \{2-\alpha, \alpha - 1\}$ 
we have $m = m(n) = \Omega (n^\beta)$.
   Then the clique number of $G(n)$ is
\begin{equation}\label{2013-09-17}
   \omega(G(n)) =  (1+o_P(1))\left(1 -  \alpha/2 \right) ^ {- \alpha/2}K(n)
\end{equation}
 where
    \[
    K(n) =  L\left( (n \ln n)^{1/2}\right) n ^ {1-\alpha / 2} (\ln n)^{-\alpha/2}.
    \]
\end{theorem}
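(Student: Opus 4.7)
The strategy is based on the following heuristic: in the power-law regime $1<\alpha<2$, large cliques in $G(n)$ arise not from vertices sharing a common attribute (as happens in the light-tail case) but from sets of vertices whose attribute lists $S_v$ are so large that pairwise intersections $S_u\cap S_v$ are overwhelmingly non-empty. Thus the extremal clique should be close to $H_{t^*}:=\{v\in V:\,Y_v\ge t^*\}$ for an optimally chosen threshold $t^*$, and the constant $(1-\alpha/2)^{-\alpha/2}$ emerges from a balance between the \emph{number} of such heavy vertices and their pairwise \emph{non}-adjacency probability.

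\smallskip
\noindent\textbf{Lower bound.} Take $t^*=\sqrt{(1-\alpha/2)\,n\ln n}$, which lies in the power-law range $[n^{1/2-\varepsilon_0},n^{1/2+\varepsilon_0}]$. Condition (\ref{eq.pldef2}) together with the slow variation of $L$ give
\[
\E|H_{t^*}|= n\Pr(Y(n)\ge t^*)\sim nL(t^*)(t^*)^{-\alpha}=(1+o(1))(1-\alpha/2)^{-\alpha/2}K(n),
\]
and concentration (Chebyshev, since $|H_{t^*}|$ is a sum of i.i.d.\ Bernoullis) yields $|H_{t^*}|=(1+o_P(1))\E|H_{t^*}|$. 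For any $u,v\in H_{t^*}$ the hypergeometric formula gives
\[
\Pr\!\bigl(S_u\cap S_v=\emptyset\mid X_u,X_v\bigr)\le \bigl(1-X_u/m\bigr)^{X_v}\le e^{-X_uX_v/m}\le e^{-(t^*)^2/n}=n^{-(1-\alpha/2)},
\]
so the expected number of non-edges in the induced subgraph $G[H_{t^*}]$ is at most $\binom{|H_{t^*}|}{2}n^{-(1-\alpha/2)}=o(|H_{t^*}|)$, and the same holds \whp by Markov. Removing one endpoint of each non-edge produces a clique of size $(1-o_P(1))(1-\alpha/2)^{-\alpha/2}K(n)$; the procedure runs in polynomial time, giving the algorithmic claim of the abstract.

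\smallskip
\noindent\textbf{Upper bound.} Fix $\varepsilon>0$ and put $k=\lceil(1+\varepsilon)(1-\alpha/2)^{-\alpha/2}K(n)\rceil$; the aim is to show $\E[\#\{k\text{-cliques in }G(n)\}]=o(1)$. For a $k$-subset $C=\{v_1,\dots,v_k\}$, anchor on $v_1\in C$ with the smallest $X_v$: the events $\{v_1\sim v_j\}_{j\ge 2}$ are conditionally independent given $S_{v_1}$, and therefore
\[
\Pr(C\text{ is a clique}\mid X\text{'s})\;\le\;\prod_{j=2}^{k}\min\!\Bigl(\frac{X_{v_1}X_{v_j}}{m},\;1\Bigr).
\]
Summing over $C$, using independence of the $X_v$'s, and splitting the resulting expectation according to whether $Y_{v_1}$ lies above or below a threshold $t'$ slightly smaller than $t^*$: in the ``above'' case the lower-bound calculation applied to $t'$ forces the number of candidate vertices to be strictly less than $k$ whp, so $v_1$ cannot be the minimum of a $k$-clique; in the ``below'' case each factor in the product is genuinely small and a direct computation using (\ref{eq.pldef2}) and the range hypothesis on $m$ shows that the resulting sum is $o(1)$.

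\smallskip
\noindent\textbf{Main obstacle.} The delicate part is the upper bound. The adjacency events $\{S_u\cap S_v\ne\emptyset\}$ are not jointly independent given the sizes of the attribute lists, so a clean product bound on $\Pr(C\text{ is a clique})$ through all $\binom{k}{2}$ edges is unavailable; one is restricted to anchoring on a single vertex, and the heavy tail of $X$ makes the first-moment calculation sensitive to how the contributions from different-sized vertices are decomposed. The choice $(t^*)^2=(1-\alpha/2)\,n\ln n$ is precisely what equalises the two competing estimates in the ``above'' and ``below'' cases; moving $t^*$ up or down loses a polynomial factor on one side. The assumption $m=\Omega(n^\beta)$ with $\beta>\max(2-\alpha,\alpha-1)$ is exactly what keeps the relevant $X_v$ inside the regime where both the power-law tail (\ref{eq.pldef2}) and the approximations $(1-X_u/m)^{X_v}\sim e^{-X_uX_v/m}$, $1-e^{-X_uX_v/m}\sim X_uX_v/m$ (when small) are simultaneously valid, so that the optimisation producing $t^*$ actually takes place.
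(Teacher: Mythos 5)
Your lower bound is correct and is essentially the paper's Lemma~\ref{lem.G2} (the Janson--\L uczak--Norros greedy/deletion argument applied to the heavy vertices with threshold $\theta_2\approx\sqrt{(1-\alpha/2)m\ln n}$). The upper bound, however, has a genuine gap that the anchoring device cannot close. Your bound on $\Pr(C\text{ is a clique})$ uses only the $k-1$ edges incident to the anchor $v_1$, and in the ``below'' case these factors are \emph{not} small: for $X_{v_1}$ and $X_{v_j}$ both of order $\sqrt{m\ln n}$ (i.e.\ just below the threshold $t'$), one has $X_{v_1}X_{v_j}/m=\Theta(\ln n)$, so $\min(X_{v_1}X_{v_j}/m,1)=1$ and the product bound is trivial. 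Even in the most favourable sub-case, each factor is at best $\min(1,X_{v_1}\E X/m)\le t'\E Y/n= n^{-1/2+o(1)}$, so the first moment is at least of order
\[
\binom{n}{k}\,\bigl(n^{-1/2+o(1)}\bigr)^{k-1}=\exp\Bigl(\tfrac{\alpha}{2}k\ln n\,(1+o(1))-\tfrac12 k\ln n\,(1+o(1))\Bigr)\longrightarrow\infty,
\]
since $\binom{n}{k}=\exp\bigl((\alpha/2)k\ln n(1+o(1))\bigr)$ for $k=\Theta(K(n))$ and $\alpha>1$. So the union bound over $k$-subsets with only $k-1$ edge factors is exponentially too weak; one needs roughly $\binom{k}{2}$ independent-looking factors, and these are unavailable precisely because, conditionally on the set sizes, the events $\{S_{v_i}\cap S_{v_j}\ne\emptyset\}$ for a fixed $v_i$ hitting several previously exposed sets are positively correlated. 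This is the central difficulty of the theorem, and your proposal acknowledges it but does not resolve it.

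The paper's route around this is quite different. It splits $V$ into $V_0,V_1,V_2$ by set size ($X_v<\theta_1$, $\theta_1\le X_v\le\theta_2$, $X_v>\theta_2$) and bounds $\omega(G)\le\omega(G_0)+\omega(G_1)+\omega(G_2)$. For $G_0$ it colours each edge by a covering attribute, uses a balls-in-bins bound on the largest monochromatic clique to show the colouring is $t$-good, and invokes the rainbow Ramsey theorem of Alon, Jiang, Miller and Pritkin (Lemma~\ref{lem.rainbow}): a large clique would force a rainbow $\ck_h$, whose expected count \emph{does} factor over distinct attributes and is $o(1)$. For $G_1$ it couples with a binomial random intersection graph and uses the system-of-distinct-representatives lemma (Lemma~\ref{lem.DR}), which shows the relevant conditional probability is maximised when the exposed sets are disjoint --- this is what legitimately restores a product bound over all $\binom{t}{2}$ pairs and permits an Erd\H{o}s--R\'enyi-type first moment (Lemmas~\ref{lem.planted}--\ref{lem.G1general}). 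Some substitute for this machinery (or another way to beat the $\binom{n}{k}$ entropy) is needed before your upper bound can stand.
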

%
We remark that adjacency relations of neighbouring vertices of a random intersection graph
are statistically dependent events and this dependence is not negligible for $m=O(n)$.
Theorem \ref{thm.main} says that in the case where the asymptotic degree distribution has 
infinite second moment ($\alpha<2$), the asymptotic order (\ref{2013-09-17}) 
of a power-law random intersection graph is the same as that
of 
the  related model of  
\cite{jln10} which has conditionally independent edges.
Let us mention that the
lower bound for the clique number
$\omega(G(n))$  is obtained using a simple and elegant argument of \cite{jln10}, 
 which is not sensitive to the statistical dependence of edges of $G(n)$.
 To show the matching upper 
 bound we developed
another  approach 
based on  a 
result of 
Alon, Jiang, Miller and Pritkin \cite{alon03}
in Ramsey theory.

In the case where the (asymptotic) degree distribution has a finite second moment
we not only  find the asymptotic 
order of $\omega(G(n))$, but also describe the structure of 
a maximal clique. To this aim, 
it is convenient to interpret attributes 
$w\in W$ as colours.
The set of vertices $T(w)=\{v\in V:\, w\in S_v\}$ induces a clique in $G(n)$ which
we denote (with some ambiguity of notation) $T(w)$. We say that every edge
of $T(w)$ receives colour $w$ and call this clique {\it monochromatic}. Note that
$G(n)$ is covered by the union of monochromatic cliques  $T(w)$, $w\in W$.  
We denote the size of the largest monochromatic clique by $\omega'(G(n))$. Clearly,
$\omega(G(n))\ge \omega'(G(n))$.

Denote $x \vee y = \max(x,y)$.
The next theorem shows that the largest clique
is a monochromatic clique (plus possibly a few extra vertices). 
%
%
%
%
 \begin{theorem}\label{thm.finite}
  Assume that $\{G(n)\}$ is a sequence of 
  random intersection graphs satisfying (\ref{eq.Y}). Suppose that  $Var(Y(n)) = O(1)$. 
  Then 
   \[
     \omega(G(n)) = \omega'(G(n))+ O_P(1).
    \]
    If, in addition, for some positive sequence $\{\eps_n\}$ converging to zero
we have
    \begin{equation} \label{eq.uni1}
        n \pr(Y(n) > \eps_n n^{1/2}) \to 0
    \end{equation}
    then, for an absolute constant $C$,
    \[
    \pr\left (\omega(G(n)) \le C \vee (\omega'(G(n)) + 3) \right) \to 1.
    \]
\end{theorem}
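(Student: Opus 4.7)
The plan is to show that any clique of $G(n)$ is, whp, a monochromatic clique together with only a bounded number of extra vertices. Fix a clique $A$ of size $k$, pick an attribute $w^*$ maximising $|A\cap T(w)|$, and set $A_0 = A\cap T(w^*)$, $R = A\setminus A_0$. Then $A_0\subseteq T(w^*)$ is a monochromatic clique, so $|A_0|\le\omega'(G(n))$, and $r := |R| = k - |A_0|$ measures the deviation from monochromaticity. It suffices to show that $r = O_P(1)$ for part 1, and that $r\le 3$ whenever $k$ exceeds an absolute constant for part 2.

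The tool is a first moment estimate on $N_{k,r}$, the number of quadruples $(w, A_0, R)$ such that $w \in W$, $|A_0|=k-r$, $|R|=r$, $A_0 \subseteq T(w)$, $R\cap T(w) = \emptyset$, and $A_0\cup R$ is a clique in $G(n)$. The existence of a max clique with at least $r_0$ extras forces $N_{k, r}\ge 1$ for some $r \ge r_0$ and some $k\ge r+1$, so it is enough to show $\sum_{k\ge\omega'_{-}+r_0,\,r\ge r_0}\E N_{k,r}\to 0$ for a suitable constant $r_0$ and a suitable high-probability lower bound $\omega'_{-}$ for $\omega'(G(n))$. Writing $X$ for a random variable with distribution $P$ and $p := \E X/m$, the event decomposes as $A_0 \subseteq T(w)$ (probability $p^{k-r}$), $R\cap T(w) = \emptyset$ (probability $\le 1$), and the remaining cross-edges (from $R$ to $A_0$) and within-$R$ edges being present. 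Conditionally on $\{S_u\}_{u\in A_0}$, the events ``$v$ adjacent to every $u\in A_0$'' for $v\in R$ are mutually independent, since the $S_v$ are independent of each other and of the $S_u$; and each can be bounded via
\[
\pr\bigl(\forall u\in A_0\colon S_v\cap S_u\ne\emptyset \bigm| \{S_u\},|S_v|\bigr) \le \prod_{u\in A_0}\frac{|S_v|\,|S_u|}{m},
\]
followed by averaging against the size-biased law of $|S_u|$ given $w\in S_u$ (mean $\E X^2/\E X$). Within-$R$ edges contribute a further $(O(\E X^2/m))^{\binom{r}{2}}$ factor.

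For part 1, the finite-variance hypothesis gives $\E X^2 = O(m/n)$; after collapsing the $m\binom{n}{k-r}\binom{n-k+r}{r}$ counting factor against the above probability, the bound takes the schematic form $\E N_{k,r}\le \Phi(k,r)\cdot(\E[X^{k-r}])^{r}$ with $\Phi$ decaying super-exponentially in $k$ for each fixed $r$. The main obstacle is that the moments $\E[X^{k-r}]$ need not be finite under the hypotheses. I would handle this by a truncation argument: taking a threshold $t_n$ slightly above $\sqrt{nm}$, Chebyshev gives that the expected number of vertices with $|S_v|>t_n$ is $O(n\E X^2/t_n^2) = o(1)$, so whp there are none; on this event $\E[X^{k-r}]\le t_n^{k-r-2}\E X^2$ is polynomially bounded. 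Summing then yields $\sum_{k\ge\omega'_{-}+r_0}\sum_{r\ge r_0}\E N_{k,r}\to 0$ for an appropriate constant $r_0$, which gives $\omega(G(n)) \le \omega'(G(n)) + r_0$ whp.

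For part 2, the tail hypothesis (\ref{eq.uni1}) and a union bound over $v\in V$ give, whp, the uniform bound $|S_v|\le\eps_n\sqrt{nm}$ for all $v$. On this event $\E[X^{k-r}\,\mathbf{1}_{\{X\le\eps_n\sqrt{nm}\}}]\le(\eps_n\sqrt{nm})^{k-r-2}\E X^2$, which inserts an additional $\eps_n^{(k-r-2)r}$ factor into $\E N_{k,r}$. This factor is strong enough to make the first-moment sum tend to zero for every $r\ge 4$ and every $k\ge C$, for a sufficiently large absolute constant $C$; smaller cliques are trivially bounded by $C$. Hence whp $\omega(G(n))\le C\vee(\omega'(G(n))+3)$, as claimed.
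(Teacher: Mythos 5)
The central estimate in your first-moment computation rests on the inequality
\[
\pr\bigl(\forall u\in A_0\colon S_v\cap S_u\ne\emptyset \bigm| \{S_u\}_{u\in A_0},|S_v|\bigr) \le \prod_{u\in A_0}\frac{|S_v|\,|S_u|}{m},
\]
and this inequality is false. Once you condition on the sets $S_u$, the only remaining randomness is the single set $S_v$; the events $\{S_v\cap S_u\ne\emptyset\}$, $u\in A_0$, are all increasing events in $S_v$ and are strongly positively correlated when the $S_u$ overlap. They do overlap by construction (all contain $w$), and even after discarding $w$ (using $R\cap T(w)=\emptyset$) nothing prevents the sets $S_u\setminus\{w\}$ from sharing further attributes. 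In the extreme case where all $S_u$ coincide with a common set $A$, the left-hand side is about $|S_v|\,|A|/m$ while the right-hand side is $(|S_v|\,|A|/m)^{|A_0|}$, so the bound is off by an unbounded factor. This is not a repairable technicality but the crux of the whole problem: in an intersection graph a single shared attribute creates an entire clique of edges at once, which is exactly why the naive first moment over cliques (which works for the conditionally-independent-edge model of Janson, \L uczak and Norros \cite{jln10}) does not transfer to this model. Reversing the conditioning does not help: conditioning on $S_v$ makes the product over $u\in A_0$ legitimate for a single $v$, but then the analogous factorization over $v\in R$ (or over the within-$R$ edges) breaks for the same reason.

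The paper's proof is designed precisely to circumvent this. It first proves a deterministic statement (Lemma~\ref{lem.bound4}): if a clique cover has pairwise intersections of size at most $p$ and the covered clique exceeds its largest monochromatic subclique by more than a bounded amount, then the cover must produce many witnesses of a \emph{rainbow} $\ck_4$, i.e.\ four vertices whose six edges can be certified by six distinct attributes. Rainbow configurations, unlike plain cliques, do admit valid product bounds: for a fixed injective assignment of attributes to edges the relevant events factorize over vertices (Lemma~\ref{lem.PRK4}), giving $\E R = O(1)$ for the number $R$ of rainbow $\ck_4$ witnesses, and $R=0$ whp under (\ref{eq.uni1}) (Lemma~\ref{lem.noRK4}). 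Combined with the fact that whp any two monochromatic cliques share at most one edge (Lemma~\ref{lem.intersection}, giving $p=2$), this yields both parts of the theorem. If you wish to push a direct first-moment argument through, you would have to count rainbow structures rather than cliques (where a statement like Lemma~\ref{lem.DR} on systems of distinct representatives restores the product bound) and then add a combinatorial argument showing that a large clique with a non-monochromatic excess forces such a rainbow substructure --- which is essentially the paper's route.
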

%
%
%
The 
condition (\ref{eq.uni1}) is not very restrictive. 
It is satisfied by  uniformly 
square integrable sequences $\{Y(n)\}$. In particular, (\ref{eq.uni1})
holds if $\{Y(n)\}$ converges in distribution to a square integrable 
random variable, say $Y_*$, and $\E Y^2(n)$ converges to $\E Y_*^2$.

Next, we evaluate the size of the largest monochromatic clique. For this purpose
we relate the random intersection graph to the balls into bins model. Let
every vertex $v\in V$ throw $X_v:=|S_v|$ balls into the bins $w_1,\dots, w_m$ 
uniformly at random,
subject to the condition that every bin receives at most one ball from each vertex.
Then $\omega'(G(n))$ counts the maximum number of balls contained in a bin.
 Let $M(N, m)$ denote the maximum number of balls  contained in any of $m$ bins after
$N$ balls were thrown into $m$ bins uniformly and independently at random.
Our next result says that the probability distribution of $\omega'(G(n))$ can be approximated
by that of $M(N, m)$, with $N\approx n\E X(n)=\E(X_1+\dots+X_n)$.
The asymptotics of $M(N, m)$ are well known, see, e.g., 
Section~6 of Kolchin et al \cite{kolchin}.

Denote by $d_{TV}(\xi,\eta)= 2^{-1}\sum_{i\ge 0}|\pr(\xi =i) - \pr(\eta=i)|$ 
the total variation distance between probability distributions
of non-negative integer valued random variables $\xi$ and $\eta$. 
%
%
%
%
%
%
%
\begin{theorem}\label{thm.omega.mono}
Assume that  $\{G(n)\}$ is a sequence of 
random intersection graphs satisfying
$\E Y = \Theta(1)$ and $Var(Y) = O(1)$.  Then
\[
d_{TV}(\omega'(G(n)), M(\lfloor (mn)^{1/2}\E Y (n) \rfloor, m)) \to 0.
\]
\end{theorem}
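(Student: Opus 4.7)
\bigskip

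\noindent\emph{Proof plan.} The plan is to reduce both sides to counts of ``heavy'' bins and to match them through Poisson approximations that share a common limit. Using the balls-into-bins reformulation described just above the theorem, which I call Model~A, vertex $v$ places one ball in each attribute of $S_v$ (at most one ball per (vertex, bin) pair), so $\omega'(G(n))=\max_w|T(w)|$. In parallel, let Model~C denote the throwing of $N^*:=\lfloor(mn)^{1/2}\E Y(n)\rfloor$ balls uniformly and independently into the $m$ bins, with bin loads $L_w^C$ and maximum $M(N^*,m)$. For each integer $a\ge 1$ define
\[
N_a^A:=\#\{w\in W:|T(w)|\ge a\},\qquad N_a^C:=\#\{w\in W:L_w^C\ge a\},
\]
so that $\pr(\omega'(G(n))\ge a)=\pr(N_a^A\ge 1)$ and $\pr(M(N^*,m)\ge a)=\pr(N_a^C\ge 1)$. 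Since for integer-valued variables $d_{TV}(\xi,\eta)\le\sum_a|\pr(\xi\ge a)-\pr(\eta\ge a)|$, and the max-load distribution is concentrated on $O(1)$ consecutive values, it suffices to prove $|\pr(N_a^A\ge 1)-\pr(N_a^C\ge 1)|\to 0$ uniformly on the critical range $\{a:\lambda_a^\bullet:=\E N_a^\bullet=\Theta(1)\}$.

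For each model I would invoke the Chen--Stein method. The indicators $\ii[w\in S_v]$ are negatively associated across $w$ by the uniform $X_v$-subset structure and independent across $v$, so $(|T(w)|)_w$ is negatively associated in Model~A; multinomial negative association gives the analogous statement in Model~C. Standard Chen--Stein estimates on the associated dependency graph (only pairs $(w,w')$ sharing a contributing vertex interact) then yield, uniformly over the critical range,
\[
d_{TV}\bigl(N_a^A,\Po(\lambda_a^A)\bigr)\to 0,\qquad d_{TV}\bigl(N_a^C,\Po(\lambda_a^C)\bigr)\to 0.
\]
It remains to match the two means. Write $\lambda_a^A=m\pr(|T(w_0)|\ge a)$, with $|T(w_0)|\mid X\sim\sum_v\mathrm{Bern}(X_v/m)$, and $\lambda_a^C=m\pr(\mathrm{Binom}(N^*,1/m)\ge a)$, for a fixed bin $w_0$. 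The hypotheses $\E Y=\Theta(1)$, $Var(Y)=O(1)$, combined with $X(n)=(m/n)^{1/2}Y(n)$, give $\E X^2=O(m/n)$, hence $Var(N)=O(m)$ and so $N:=\sum_v X_v=N^*+O_P(m^{1/2})$, and $\sum_v(X_v/m)^2=O_P(1/m)$. Both single-bin distributions are then close to $\Po(\mu)$ with $\mu:=N^*/m=(n/m)^{1/2}\E Y$, and a tail-sensitive Poisson approximation upgrades this to $\pr(|T(w_0)|\ge a)=\pr(\mathrm{Binom}(N^*,1/m)\ge a)(1+o(1))$ uniformly over the critical range, giving $\lambda_a^A\sim\lambda_a^C$ and closing the argument.

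The main technical obstacle lies in this last step: a bare Le~Cam Poisson-approximation bound controls the single-bin error only by $O(1/m)$, which multiplied by the $m$ bins leaves $|\lambda_a^A-\lambda_a^C|=O(1)$ rather than the required $o(1)$. One has to exploit that at the critical $a$ the Poisson tail $\pr(\Po(\mu)\ge a)$ is itself of order $1/m$, that the first two moments of $|T(w_0)|$ and of $\mathrm{Binom}(N^*,1/m)$ match those of $\Po(\mu)$ up to $O(1/m)$, and that $\sum_v(X_v/m)^2$ is $o_P(1)$ with room to spare, in order to upgrade the Poisson approximation to a relative-error bound of the form $1+o(1)$ in the relevant tail. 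This is where the bulk of the technical work in the proof will go.
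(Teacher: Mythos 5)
Your plan takes a genuinely different route from the paper, and as it stands it has real gaps. The paper does not use Poisson approximation at all: it constructs $\omega'(G(n))$ and $M(\lfloor\sqrt{mn}\,\E Y\rfloor,m)$ on a \emph{common probability space} by generating each $S_{v_i}$ through sequentially throwing balls labelled $i$ until $X_i$ distinct bins are hit. The total number $N'$ of balls thrown concentrates around ${\bar N}=n\E X_1$, whp the fullest bin contains no two balls with the same label (so its load \emph{is} $\omega'(G(n))$), and a separate smoothness lemma shows $M(N,m)$ does not change when $N$ is perturbed by a factor $1+o(1/\ln^2 n)$. Hence the two random variables are literally equal whp and the total variation bound is immediate. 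This coupling completely sidesteps the two places where your argument is incomplete.

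The first gap is the one you flag yourself: you need $\pr(|T(w_0)|\ge a)=\pr(\mathrm{Binom}(N^*,1/m)\ge a)(1+o(1))$ \emph{in relative error} at the critical $a$, where both tails are of order $1/m$, and you do not prove it. This is not a routine upgrade; it requires, e.g., two-sided Bonferroni bounds via factorial moments $\E\binom{|T(w_0)|}{a}=\binom{n}{a}(\E X/m)^a$ versus $\binom{N^*}{a}m^{-a}$, carried out uniformly over the critical window, and a separate argument that the sum of $|\pr(\cdot\ge a)-\pr(\cdot\ge a)|$ over the $\Theta(\ln m/\ln\ln m)$ subcritical values of $a$ is $o(1)$. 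The second gap is in the Chen--Stein step. Unconditionally the indicators $\ii[w\in S_v]$, $w\in W$, are \emph{not} negatively associated when $X_v$ is random: $\pr(w_1,w_2\in S_v)=\E X_v(X_v-1)/(m(m-1))$ exceeds $\pr(w_1\in S_v)\pr(w_2\in S_v)$ whenever $\E X_v^2$ is large enough relative to $(\E X_v)^2$, which the hypotheses $\E Y=\Theta(1)$, $Var(Y)=O(1)$ permit. Moreover your proposed dependency graph is the complete graph on $W$ (every vertex contributes to every pair of bins with positive probability), so the naive dependency-graph form of Chen--Stein gives nothing; you would need to condition on $(X_v)_v$ first (making $\lambda_a^A$ random) or use a local-coupling version. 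None of this is fatal in principle, but the proposal as written does not constitute a proof, whereas the paper's coupling argument closes the statement with only Chernoff/Chebyshev-level estimates.
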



\begin{remark} \label{rmk.omega.mono} 
For $n, m\to+\infty$ the relations  
$\E Y = \Theta(1), Var Y = O(1)$ imply $n=O(m)$. In particular, the conditions of 
Theorem~\ref{thm.omega.mono} rule out the case $m=o(n)$.
\end{remark}
%
%
%
Let us summarize our results about the clique number of a sparse random intersection graph
$G(n)$ with a square integrable (asymptotic) degree distribution. 
We note that the conditional probability
(called the clustering coefficient of $G(n)$)
\begin{equation}\label{2013-09-19}
 \pr(v_1\sim v_2|v_1\sim v_3, v_2\sim v_3)\approx (n/m)^{1/2}\E Y(n)/\E Y^2(n)
\end{equation}
only attains a non-trivial value for $m=\Theta(n)$ and $\E Y^2(n)=\Theta(1)$. (Here $u \sim v$
is the event that $u$ and $v$ are adjacent in $G(n)$, i.e., $uv \in E(G(n))$.)
In the latter case 
 Theorems~\ref{thm.finite} and 
\ref{thm.omega.mono}
 together with the asymptotics for $M(N,m)$ (Theorem~II.6.1 of \cite{kolchin}), 
imply 
that
\[
  \omega(G(n)) = \frac {\ln n} {\ln \ln n} \left( 1 + o_P(1) \right).
\]
 In contrast, the clique number of a sparse  Erd\H os-R\'enyi random graph $G(n,c/n)$ 
is at most $3$, and in the model of \cite{jln10}, with square 
integrable asymptotic degree distribution,
 the largest clique has at most 4 vertices.

Each of our main results, Theorem~\ref{thm.main} and Theorem~\ref{thm.finite},
have corresponding 
simple polynomial algorithms that construct a clique of the optimal order whp.
For  a power-law graph with $\alpha \in (1;2)$, it is the greedy algorithm 
of \cite{jln10}: sort  vertices 
in descending order according to their degree; traverse vertices in 
that order and `grow' a clique, 
by adding a vertex
if it is
connected to each vertex in the current clique.
For  
a graph with a finite degree variance 
we propose even simpler algorithm:
 for each pair of adjacent vertices, 
take any maximal clique
formed by that pair and their common neighbours. 
Output the biggest maximal clique found in this way. 
More details and analysis of each of the algorithms are given 
in Section~\ref{sec.algo} below.

In practical situations a graph may be assumed to be distributed as a random 
intersection graph,
but information about the subset size distribution may not be available. 
In such a case, 
instead of condition (\ref{eq.pldef2}) 
for the 
tail of the normalised subset size $Y(n)$, we may consider a similar condition 
for the tail of the degree $D_1(n)$
of the vertex $1\in V$ in $G(n)$: 
there are constants $\alpha' > 1, \eps' > 0$ and
a slowly varying function $L'(x)$ such that     for any 
sequence $t_n$ with $n^{1/2 - \eps'} \le t_n \le n^{1/2+\eps'}$
    \begin{equation} \label{eq.degheavytail}
        \pr(D_1(n) \ge t_n) \sim L'(t_n) t_n^{-\alpha'}.
    \end{equation}
The following lemma shows that, subject to an additional assumption, there is
equivalence between conditions (\ref{eq.pldef2})  and (\ref{eq.degheavytail}) .
%
%
\begin{lemma} \label{lem.degrees} 
    Assume that  $\{G(n)\}$ is 
a sequence of random intersection graphs such that 
    for some $\eps > 0$ we have
    \begin{equation} \label{eq.uniform}
        \E Y(n) \I_{Y(n) \ge n^{1/2 - \eps}} \to 0.
    \end{equation}
    Suppose that either $(\E Y(n))^2$ or $\E D_1(n)$ converges to a positive number, say, $d$. 
    
    Then both limits exist and are equal,  $\lim\E D_1(n)=\lim(\E Y(n))^2=d$.
Furthermore, the condition (\ref{eq.degheavytail}) holds if and only if 
(\ref{eq.pldef2}) holds. In that case,
    $\alpha' = \alpha$ and $L'(t) = d^{\alpha/2} L(t)$.
\end{lemma}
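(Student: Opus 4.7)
The plan is to condition on $X_1 = |S_1|$, observe that $D_1(n)$ then becomes a binomial random variable, and use binomial concentration to transfer tail behaviour between $Y(n)$ and $D_1(n)$. Writing $q(a) := \E [1 - \binom{m - a}{X}/\binom{m}{X}]$ (expectation over an independent copy $X$ of $X(n)$) for the probability that a fresh $S_v$ meets a given $a$-set, the events $\{S_v \cap S_1 \ne \emptyset\}$ for $v \ge 2$ are conditionally independent given $S_1$, so $D_1(n)\mid X_1 \sim \mathrm{Binomial}(n-1, q(X_1))$. The elementary estimate $1 - \binom{m-a}{b}/\binom{m}{b} = ab/m + O((ab/m)^2)$ for $a,b \le m/2$, together with the truncation of $X_1, X_2$ at $\sqrt m\, n^{-\eps}$ afforded by (\ref{eq.uniform}) (which, via $X = \sqrt{m/n}\,Y$, reads $\sqrt{n/m}\,\E X\, \I_{X \ge \sqrt m\, n^{-\eps}} \to 0$), give
\[
  (n-1)\, q(X_1) = Y_1\, \E Y(n)\,(1 + o_P(1))
\]
on the typical event where $Y_1$ is polynomially bounded, using $X_1 = \sqrt{m/n}\,Y_1$ and $\E X = \sqrt{m/n}\,\E Y$.

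Averaging over $X_1$ gives $\E D_1(n) = (n-1)\,\E q(X_1) = (\E Y(n))^2\,(1 + o(1))$ by the same truncation argument, so convergence of either side to a positive number $d$ is equivalent to convergence of the other to the same $d$; this proves the first assertion of the lemma.

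For the tail equivalence I would fix $t_n$ in the admissible range and set $y_n := t_n / \E Y(n)$, which lies in $[n^{1/2 - \eps''}, n^{1/2 + \eps''}]$ for any small $\eps'' > \eps'$ once $n$ is large, since $\E Y(n) \to d^{1/2}$. Standard Chernoff bounds give, for any fixed $\eta > 0$,
\[
  \pr(D_1 \ge t_n \mid Y_1 = y) = \begin{cases} 1 - o(1), & y \ge (1 + \eta)\,y_n, \\ o(1), & y \le (1 - \eta)\, y_n, \end{cases}
\]
uniformly in $y$, because the conditional mean is $y\,\E Y(n)(1+o(1))$ and the binomial relative fluctuation is $O(y_n^{-1/2})$. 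Sandwiching and invoking the regular variation in (\ref{eq.pldef2}) yields
\[
  \pr(D_1 \ge t_n) \sim \pr(Y_1 \ge y_n) \sim L(y_n)\, y_n^{-\alpha} \sim d^{\alpha/2}\, L(t_n)\, t_n^{-\alpha},
\]
which is (\ref{eq.degheavytail}) with $\alpha' = \alpha$ and $L'(t) = d^{\alpha/2}\, L(t)$. The reverse implication uses exactly the same sandwich, starting from (\ref{eq.degheavytail}).

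The main obstacle will be ensuring that the binomial concentration is uniformly strong across the admissible range of $t_n$ while preserving the linearisation $q(a) \approx a\mu/m$; both are controlled by (\ref{eq.uniform}) together with standard Chernoff estimates for sums of independent Bernoulli variables. Once these are in place the rest is bookkeeping with slowly varying functions, where one repeatedly uses $L((1 \pm \eta) y_n)/L(y_n) \to 1$ as $\eta \to 0$ and $L(y_n)/L(t_n) \to 1$ from the slow variation property.
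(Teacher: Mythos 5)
Your approach is essentially the paper's: both parts rest on the observation that, conditionally on $X_1$, the degree $D_1(n)$ is binomial with success probability $\approx X_1\E X/m$, combined with truncation via (\ref{eq.uniform}) for the first claim and Chernoff concentration plus regular variation for the tail equivalence. (The paper phrases the latter as a proof by contradiction along subsequences with explicit liminf/limsup bounds, but the mechanism is identical to your direct sandwich.) One quantitative point must be sharpened before the sandwich closes: for $y\le(1-\eta)y_n$ you assert $\pr(D_1\ge t_n\mid Y_1=y)=o(1)$, but the quantity you are matching, $\pr(Y_1\ge y_n)\sim L(y_n)y_n^{-\alpha}$, is itself polynomially small (of order roughly $n^{-\alpha/2}$), so a bare $o(1)$ contribution from the event $\{Y_1\le(1-\eta)y_n\}$ swamps the main term in the upper bound and the claimed asymptotic equivalence does not follow as written. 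What you need is that this conditional probability is $o\bigl(\pr(Y_1\ge y_n)\bigr)$ uniformly over $y\le(1-\eta)y_n$; this is available, since the conditional mean is at most $(1-\eta)(1+o(1))t_n$ with $t_n\ge n^{1/2-\eps'}$, so Chernoff gives a super-polynomially small bound of the form $e^{-c\eta^2 t_n}$ --- this is exactly why the paper carries explicit $O(n^{-10})$ estimates at this step, together with a monotonicity-in-$X_1$ argument to make the bound uniform. A second, minor point: when only $\E D_1(n)\to d$ is assumed, you should first record that $\E Y(n)=\Omega(1)$ (it follows from $\E D_1\le\frac{n-1}{n}(\E Y)^2$), since the truncation step requires $\E Y\I_{Y\ge n^{1/2-\eps}}=o(\E Y)$ and not merely $\to 0$. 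With these two repairs your argument coincides with the paper's.
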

%
%
%
Thus, under a mild additional assumption (\ref{eq.uniform}), condition (\ref{eq.pldef2}) 
of Theorem~\ref{thm.main} can be replaced by  (\ref{eq.degheavytail}).
Similarly, the condition  $Var Y(n)=O(1)$ of  Theorem~\ref{thm.finite}
can be replaced by 
the condition $Var D_1(n)=O(1)$. 
\begin{lemma} \label{lem.degvar}
    Assume that  $\{G(n)\}$ is a sequence of random intersection graphs and for some
positive sequence $\{\eps_n\}$ converging to zero we have
    \begin{equation} \label{eq.uni2}
      \E Y^2(n) \I_{Y(n) > \eps_n n^{1/2}} \to 0. 
  \end{equation}
    Suppose that either $\E Y(n) = \Theta(1)$ or $\E D_1(n) = \Theta(1)$. Then
    \begin{align}
        \E D_1(n) &= (\E Y(n))^2  + o(1) \label{eq.ED12}
        \\ Var D_1(n) &= (\E Y(n))^2 (Var Y(n) + 1)   + o(1). 
\label{eq.VarD}
    \end{align}
\end{lemma}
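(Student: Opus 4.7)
The plan is to decompose $D_1 = \sum_{v=2}^{n} \xi_v$ with $\xi_v = \I[S_1 \cap S_v \ne \emptyset]$, condition on $S_1$, and reduce both moments of $D_1$ to the first two moments of $X(n)$ (equivalently of $Y(n)$), using (\ref{eq.uni2}) to control contributions from large attribute sets. The key analytic input is the elementary two-sided bound
$$1 - e^{-sk/m} \;\le\; 1 - \binom{m-s}{k}\big/\binom{m}{k} \;\le\; \min(sk/m,\,1),$$
which implies that for $sk \le m/4$ the left-hand side equals $(sk/m)(1 + O(sk/m))$. Hence on the truncation event $\{X_1, X_v \le \eps_n\sqrt{m}\}$, which is equivalent to $\{Y_1, Y_v \le \eps_n\sqrt{n}\}$, the conditional intersection probability is $X_1 X_v/m$ up to a multiplicative factor $1 + O(\eps_n^2)$.

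Before the computation I would enlarge $\eps_n$ slightly so that, in addition to $\E Y^2 \I[Y > \eps_n\sqrt n] \to 0$, one also has $\E Y^2 \I[Y > \eps_n\sqrt n] = o(\eps_n^2)$; this is always possible (replace $\eps_n$ by $\eps_n \vee a_n^{1/4}$, where $a_n$ denotes the original left-hand side). With this strengthening $n\,\pr(Y > \eps_n\sqrt n) \to 0$, and via $\E[Y \I[Y > \eps_n\sqrt n]] \le \E Y^2\I/(\eps_n\sqrt n)$ the untruncated contributions to $\E \xi_2$ and to $\E[\xi_v \xi_w]$ ($v \neq w$) are $o(1/n)$ and $o(1/n^2)$ respectively. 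The first moment then collapses to
$$\E D_1 = (n-1)(1+o(1))(\E X)^2/m + o(1) = (1+o(1))(\E Y(n))^2 + o(1),$$
which is (\ref{eq.ED12}); the stated equivalence $\E Y(n) = \Theta(1) \Leftrightarrow \E D_1(n) = \Theta(1)$ is an immediate consequence, and it also establishes the $\E Y = O(1)$ control needed below.

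For the variance I condition on $S_1$ and use the law of total variance: given $|S_1| = s$ the indicators $\xi_2, \ldots, \xi_n$ are i.i.d.\ Bernoulli with parameter $g(s) := 1 - \E[\binom{m-s}{X}/\binom{m}{X}]$, so
$$\text{Var}(D_1) = (n-1)\,\E[g(X_1)(1 - g(X_1))] + (n-1)^2\,\text{Var}(g(X_1)).$$
Since $g(X_1) = o_P(1)$ on the truncated event, the first summand equals $(1+o(1))\E D_1 = (\E Y)^2 + o(1)$. For the second, the approximation $g(X_1) = X_1\,\E X/m + (\text{error})$ on the truncated event, combined with the tail estimates above, yields $\text{Var}(g(X_1)) = (\E X)^2 \text{Var}(X_1)/m^2 + o(n^{-2})$; substituting $X = \sqrt{m/n}\,Y$ this becomes $(\E Y)^2 \text{Var}(Y)/n^2 + o(n^{-2})$, whence $(n-1)^2 \text{Var}(g(X_1)) = (\E Y)^2 \text{Var}(Y) + o(1)$, and summing the two pieces delivers (\ref{eq.VarD}). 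The main technical obstacle is exactly this $o(n^{-2})$ error bound on $\text{Var}(g(X_1))$: it is tight, and it is only at this step that the strengthened form of (\ref{eq.uni2}) with $\E Y^2 \I/\eps_n^2 \to 0$ is essential; the rest is bookkeeping via the identity $\E X^k = (m/n)^{k/2}\E Y^k$.
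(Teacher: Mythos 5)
Your argument is correct and is essentially the paper's own proof in different packaging: the paper likewise reduces everything to the two-sided bound $X_1X_2/m \ge \pr(S_1\cap S_2\ne\emptyset\mid X_1,X_2)\ge X_1X_2/m - X_1^2X_2^2/m^2$ with the same truncation at $\eps_n m^{1/2}$, and its computation of $\E D_1(D_1-1)$ by counting $2$-stars centred at vertex $1$ (via the conditional independence of $\{1\sim 2\}$ and $\{1\sim 3\}$ given the set sizes) is algebraically identical to your law-of-total-variance split $Var\, D_1=(n-1)\E[g(X_1)(1-g(X_1))]+(n-1)^2 Var(g(X_1))$. The only real divergence is your preliminary enlargement of $\eps_n$, which is harmless but not needed: the bounds $\E Y\I_{Y>\eps_n n^{1/2}}\le(\E Y^2\I_{Y>\eps_n n^{1/2}})^{1/2}\to 0$ and $g(X_1)\le X_1\E X/m$ already control all the tail contributions, which is how the paper proceeds.
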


\smallskip


Cliques of random intersection graphs 
have been studied in  
\cite{karonski99}, where edge density thresholds 
for emergence of  small (constant-sized) 
cliques were determined, and in \cite{rybstark2010}, where the Poisson approximation
to the distribution 
of the number of small cliques was established. The clique number was studied in 
\cite{nrs2012}, see also \cite{btu09},
in the case, where $m\approx n^{\beta}$, for some $0<\beta<1$. We note that in
the papers \cite{karonski99}, \cite{rybstark2010}, \cite{nrs2012} a particular random 
intersection graph with the binomial distribution $P\sim Bin(p, m)$ was considered.


The rest of the paper is organized as follows. In Section~\ref{sec.power_law} we 
study sparse random power-law intersection graphs with index 
$\alpha \in (1;2)$, 
introduce the result on ``rainbow'' cliques in extremal combinatorics 
(Lemma~\ref{lem.rainbow}) and prove Theorem~\ref{thm.main}.
In Section~\ref{sec.finite.variance} we relate our model 
to the balls and bins model and prove Theorem~\ref{thm.finite}. 
In Section~\ref{sec.algo} we present and analyse algorithms 
for finding large cliques in $G(n,m,P)$. 
In Section~\ref{sec.tailequivalence} we prove 
Lemmas~\ref{lem.degrees}~and~\ref{lem.degvar}.
Finally we give some concluding remarks.

\section{Power-law intersection graphs}
\label{sec.power_law}

\subsection{Proof of Theorem~\ref{thm.main}}
\label{subsec.prelim}

We start with introducing some notation. Given  a family of 
subsets $\{S_v, v\in V'\}$ of an attribute set $W'$, we denote $G(V',W')$
 the 
 \emph{intersection graph} on the vertex set $V'$ defined by this family: $u,v\in V'$ are adjacent
(denoted $u\sim v$)  
whenever $S_u\cap S_v\not=\emptyset$. 
We say that an attribute $w\in W'$ \emph{covers} the edge $u\sim v$ of $G(V',W')$ 
whenever $w\in S_u\cap S_v$. In this case we also say that the edge $u\sim v$ receives 
colour $w$.
In particular, an attribute $w$ covers all edges
of the (monochromatic) clique  subgraph $T_w$ of $G(V',W')$ 
induced by the vertex set $T_w=\{v\in V':\, w\in S_v\}$. Given a graph $H$, we say that 
$G(V',W')$ \emph{contains a rainbow} $H$ if there is a subgraph $H'\subseteq G(V',W')$ 
isomorphic to $H$ such that
every edge  of $H'$ can be prescribed 
an attribute that covers this edge so that all prescribed attributes are different.

We denote by $e(G)$ the size of the set $E(G)$ of edges of a graph $G$.
Given two graphs $G=(V(G),E(G))$ and $R=(V(R),E(R))$  
we denote by $G\vee R$ the graph on 
vertices 
$V(G)\cup V(R)$ and
with edges 
$E(G)\cup E(R)$. In what follows we 
assume that $V(G)=V(R)$ if not mentioned otherwise. 
Let $t$ be a positive integer and let $R$ be a non-random graph on the vertex set $V'$. 
Assuming that subsets $S_v$, $v\in V'$ are drawn at random, introduce the event 
$Rainbow(G(V',W'),R,t)$ that the graph $G(V',W')\vee R$  has a clique $H$ of 
size
$|V(H)|=t$ with 
the property that every edge  of the set $E(H) \setminus E(R)$ can be prescribed 
an attribute that covers this edge so that all prescribed attributes are different.

In the case where every vertex $v$ of 
the random intersection graph $G(n,m,P)$ includes attributes independently at random
with probability $p=p(n)$, the size $X_v:=|S_v|$ of the attribute set has binomial distribution 
$P\sim Binom(m,p)$. We denote such graph
$G(n,m,p)$ and call it a \emph{binomial} random intersection graph.
We note that for 
$mp\to+\infty$ the sizes $X_v$ of random sets are concentrated 
around their mean value 
$\E X_v=mp$. An application of   Chernoff's bound (see, e.g., \cite{cmcd98})
\begin{equation} \label{eq.chernoff}
\pr( \left| B - mp \right| > \eps mp) \le 2 e^{-\frac13 {\eps^2 mp}},
\end{equation}
where $B$ is a binomial random variable $B\sim Binom(m,p)$ and $0<\eps < 3/2$,
implies 
\begin{equation} \label{eq.barB}
\pr(\exists v \in [n] : \left| X_v - mp \right| > y) \le n \pr( | X_v - mp| > y) \to 0
\end{equation}
for any $y = y(n)$ such that $y / \sqrt {mp \ln n} \to \infty$ and $y/(mp)<3/2$.

We write $a\wedge b =\min\{a,b\}$ and $a\vee b=\max\{a,b\}$.

Let us prove Theorem~\ref{thm.main}.  For every member $G(n)=G(V,W)$ of a sequence 
$\{G(n)\}$ 
satisfying conditions of 
Theorem~\ref{thm.main} and a number
 $\eps_1\in (0,\eps_0)$ 
define the subgraphs $G_i\subseteq G(n)$, $i=0,1,2$, induced by the vertex sets
\begin{align*}
 & V_0 = V_0(n) = \{v\in V(G(n)):\, X_v < \theta_1\}; \\
 & V_1 = V_1(n) =  \{v\in V(G(n)):\, \theta_1 \leq X_v \leq \theta_2\};\\
 & V_2 = V_2(n) = \{v\in V(G(n)):\, \theta_2 < X_v\},
\end{align*}
respectively. 
Here $X_v=|S_v|$ denotes the size of the attribute set prescribed to a vertex $v$ and
the numbers
\begin{align*}
&\theta_1 = \theta_1(n) = m^{1/2} n^{-\eps_1}; &\theta_2 = \theta_2(n) = \left( (1-\alpha/2) m \ln n + m e_1 \right)^{1/2},
\end{align*}
with
$e_1 = e_1(n) = \max(0, \ln L( (n \ln n)^{1/2}))$. Note that $e_1\equiv 0$ for $L(x)\equiv 1$.
We have
$V=V_0\cup V_1\cup V_2$ and $V_i\cap V_j=\emptyset$ for $i\not= j$.
Theorem~\ref{thm.main} follows from the three lemmas below.
Let $K = K(n)$ be as in Theorem~\ref{thm.main}.
The first lemma gives a lower bound for the clique number of $G(n)$. 
\begin{lemma}\label{lem.G2} For any $m = m(n)$ 
 \[
    \omega(G_2) = |V_2| (1 - o_P(1)) =(1-o_P(1))\left(1 -  \alpha/2 \right) ^ {- \alpha/2}K.
  \]
\end{lemma}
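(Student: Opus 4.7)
The argument naturally splits into two parts: first, pinning down the size of $V_2$, and second, showing that $V_2$ already induces a near-clique. For the first, observe that $|V_2|=\sum_{v\in V}\I[X_v>\theta_2]$ is a sum of $n$ i.i.d.\ Bernoulli variables with common parameter $p=\pr(Y(n)>y_n)$, where $y_n:=\theta_2\sqrt{n/m}=(n(1-\alpha/2)\ln n+n e_1)^{1/2}$. Since a slowly varying function grows slower than any power, $e_1=\max(0,\ln L(\sqrt{n\ln n}))=o(\ln n)$, so $y_n\sim\sqrt{(1-\alpha/2)n\ln n}$ and $y_n\in[n^{1/2-\eps_0},n^{1/2+\eps_0}]$ for all large $n$, letting (\ref{eq.pldef2}) apply. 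The uniform convergence theorem for slowly varying functions (applied to $y_n/\sqrt{n\ln n}\to\sqrt{1-\alpha/2}\in(0,\infty)$) gives $L(y_n)\sim L(\sqrt{n\ln n})$, whence
\[
\E|V_2|=np\sim (1-\alpha/2)^{-\alpha/2}L(\sqrt{n\ln n})\,n^{1-\alpha/2}(\ln n)^{-\alpha/2}=(1-\alpha/2)^{-\alpha/2}K(n).
\]
Since $\alpha<2$ forces $\E|V_2|\to\infty$, Chebyshev's inequality yields $|V_2|=(1+o_P(1))\E|V_2|$.

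For the second part, I would bound the number $N_2$ of pairs $\{u,v\}\subset V_2$ with $S_u\cap S_v=\emptyset$. The standard estimate
\[
\pr(S_u\cap S_v=\emptyset\mid X_u,X_v)=\binom{m-X_u}{X_v}\!\Big/\!\binom{m}{X_v}\le(1-X_u/m)^{X_v}\le e^{-X_u X_v/m},
\]
combined with $X_u,X_v>\theta_2$, yields $\E N_2\le\binom{n}{2}p^2 e^{-\theta_2^2/m}=\binom{n}{2}p^2 n^{-(1-\alpha/2)}e^{-e_1}$. Substituting $np\asymp K$ and $K^2=L^2n^{2-\alpha}(\ln n)^{-\alpha}$, and using that $L\cdot e^{-e_1}\le 1$ by the very definition of $e_1$, we get
\[
\E N_2=O\!\left(L^2 e^{-e_1}\,n^{1-\alpha/2}(\ln n)^{-\alpha}\right)=O\!\left(K(\ln n)^{-\alpha/2}\right)=o(\E|V_2|).
\]
Markov then gives $N_2=o_P(|V_2|)$. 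Greedily removing one endpoint from each non-edge costs at most $N_2$ vertices and leaves a clique in $G_2$, so $\omega(G_2)\ge|V_2|-N_2=(1-o_P(1))|V_2|$. Matching the trivial upper bound $\omega(G_2)\le|V_2|$, this yields $\omega(G_2)=(1-o_P(1))(1-\alpha/2)^{-\alpha/2}K(n)$.

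The delicate part is the bookkeeping around the slowly varying factor $L$: the auxiliary term $e_1$ inside $\theta_2$ is introduced precisely so that $L\cdot e^{-e_1}\le 1$, which causes the $L^2$ naturally appearing in $\E N_2$ to collapse to a single $L$ and leaves the decisive gap $(\ln n)^{-\alpha/2}$ between $\E N_2$ and $\E|V_2|$ regardless of whether $L$ is bounded or not. A secondary subtlety is the need for the uniform (rather than merely pointwise) version of slow variation to justify $L(y_n)\sim L(\sqrt{n\ln n})$, since the ratio $y_n/\sqrt{n\ln n}$ tends to $\sqrt{1-\alpha/2}$ only through a non-constant sequence.
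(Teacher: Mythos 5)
Your proof is correct and follows essentially the same route as the paper's: both estimate $\E|V_2|$ via (\ref{eq.pldef2}) together with the uniform convergence of $L$, concentrate $|V_2|$ around its mean, and bound the clique defect by the number of non-intersecting pairs inside $V_2$, whose expectation is $O(K(\ln n)^{-\alpha/2})=o(\E|V_2|)$ precisely because of the $e_1$ correction built into $\theta_2$. The only cosmetic difference is that the paper packages the vertex-removal step as the greedy algorithm of \cite{jln10} and bounds $\E\bigl(|L_{\theta}|/(N+1)\bigr)$ by conditioning on $N$, whereas you delete an endpoint of each bad pair directly and compare $\E N_2$ with $\E|V_2|$ via Markov.
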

The next two lemmas provide an upper bound.
\begin{lemma} \label{lem.G0}
    Suppose there is $\beta > \alpha - 1$ such that
    $m = \Omega(n^\beta)$. If $\eps_1 < \frac \beta 6$ then there is $\delta > 0$ such that
    \[\pr \left( \omega(G_0) \ge n^{1-\alpha/2 - \delta} \right) \to 0.\]
\end{lemma}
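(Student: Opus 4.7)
The plan is to argue by contradiction. Set $t := \lceil n^{1-\alpha/2-\delta} \rceil$ for a small $\delta > 0$ to be fixed at the end (in terms of $\alpha, \beta, \eps_1$), suppose that $G_0$ contains a clique $K$ on a vertex set $V' \subseteq V_0$ of size $t$, and colour each edge $uv \in E(K)$ by an arbitrary attribute in $S_u \cap S_v$. I then combine a bound on monochromatic sub-cliques with the rainbow Ramsey Lemma~\ref{lem.rainbow} of Alon, Jiang, Miller and Pritkin.

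First I would control monochromatic sub-cliques. For each $w \in W$ the variable $|T_w \cap V_0|$ is a sum of independent Bernoullis with success probability
\[
\pr(w \in S_v, X_v < \theta_1) = \E\bigl[(X_v/m)\, \I_{X_v < \theta_1}\bigr] \le \E X(n)/m = O\bigl((mn)^{-1/2}\bigr),
\]
using $\E X(n) = O((m/n)^{1/2})$, which follows from (\ref{eq.Y}). Hence $\E |T_w \cap V_0| = O(n^{(1-\beta)/2})$, and the Chernoff bound (\ref{eq.chernoff}) together with a union bound over the at most $m$ attributes gives that whp
\[
q := \max_{w \in W} |T_w \cap V_0| = O\bigl(n^{(1-\beta)/2} + \ln n\bigr).
\]
On this good event every colour class in the edge colouring of $K$ contains at most $\binom{q}{2}$ edges, since vertices of $K$ sharing a colour form a sub-clique contained in some $T_w \cap V_0$.

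Next I would apply Lemma~\ref{lem.rainbow} to the edge-coloured $K_t$ to extract a rainbow $K_s \subseteq K$ of size $s = s(t,q)$. Because $\beta > \alpha - 1$ forces $t$ to be polynomially larger than $q$, one can, with $\delta$ chosen small enough, arrange $s > 1 + 1/\eps_1$ (in fact $s \to \infty$). It remains to show that rainbow cliques of this size are rare in $G_0$. Independence of $\{w \in S_v\}$ across $v$ together with negative correlation across $w$ for each fixed $v$ yields $\pr(A \subseteq S_v,\, X_v < \theta_1) \le (\theta_1/m)^{|A|}$ for every $A \subseteq W$. Summing over the $\binom{n}{s}$ vertex sets and the at most $m^{\binom{s}{2}}$ injective attribute labellings of the $\binom{s}{2}$ edges,
\[
\E\bigl[\#\text{rainbow } K_s \text{ in } G_0 \bigr] \le \binom{n}{s} m^{\binom{s}{2}} \bigl(\theta_1/m\bigr)^{s(s-1)} = \binom{n}{s}\bigl(\theta_1^2/m\bigr)^{\binom{s}{2}} = \binom{n}{s}\, n^{-2\eps_1 \binom{s}{2}} \le n^{s(1 - \eps_1(s-1))},
\]
which is $o(1)$ as soon as $s > 1 + 1/\eps_1$. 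Markov's inequality then excludes $K_t \subseteq G_0$ whp.

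The main obstacle is balancing $t, q, s$ and $\delta$ so that Lemma~\ref{lem.rainbow} supplies a rainbow sub-clique of size large enough for the first moment bound to vanish. The hypothesis $\beta > \alpha - 1$ enters precisely here: it makes $q = n^{(1-\beta)/2 + o(1)}$ polynomially smaller than $t = n^{1-\alpha/2-\delta}$, which is what lets the rainbow lemma deliver a sub-clique of the required size. The technical condition $\eps_1 < \beta/6$ then provides the slack needed to absorb the logarithmic losses in the Chernoff step and the slowly-varying corrections implicit in (\ref{eq.pldef2}).
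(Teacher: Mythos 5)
Your proposal is correct and follows essentially the same route as the paper: colour the edges of a putative clique of size $n^{1-\alpha/2-\delta}$ by covering attributes, bound the colour multiplicity via the maximum monochromatic clique size, invoke the rainbow Ramsey result (Lemma~\ref{lem.rainbow}) to extract a rainbow $\ck_h$ with $h>1+1/\eps_1$, and rule that out by a first-moment bound using $\theta_1^2/m=n^{-2\eps_1}$. The only differences are in implementation: the paper gets its goodness parameter from Lemma~\ref{lem.ballsbins} and bounds rainbow cliques by coupling $G_0$ into a binomial intersection graph with $p=m^{-1/2}n^{-\eps_1}+m^{-2/3}$ and citing Lemma~\ref{lem.karonski} (which is where the hypothesis $\eps_1<\beta/6$ is actually needed, and why your version does not really use it), whereas you compute the first moment directly from $\pr(A\subseteq S_v,\,X_v<\theta_1)\le(\theta_1/m)^{|A|}$ --- the one small repair is that for the tail of $|T_w\cap V_0|$ you should use $\pr(\mathrm{Bin}(n,p)\ge k)\le(enp/k)^k$ (as in the proof of Lemma~\ref{lem.ballsbins}) rather than the multiplicative Chernoff inequality (\ref{eq.chernoff}), which cannot yield $q=O(\ln n)$ when the mean is $O(1)$.
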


\begin{lemma} \label{lem.G1} 
    Suppose there is  $\beta > 2 - \alpha$ such that  $m = \Omega(n^\beta)$. If $\eps_1 < \frac {\beta - 2 + \alpha} {24}$ then
  \[\omega(G_1) = o_P(K).\]
\end{lemma}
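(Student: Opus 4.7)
The plan is to apply a first-moment argument to rainbow substructures, using Lemma~\ref{lem.rainbow} (the Alon--Jiang--Miller--Pritikin Ramsey-type bound) to pass from bounding $\omega(G_1)$ to bounding the count of rainbow cliques. The structural input is that every $v\in V_1$ satisfies $|S_v|\leq \theta_2$, so any clique $C\subseteq V_1$ inherits a natural edge-colouring (assign to each edge $uv$ a covering attribute in $S_u\cap S_v$) in which every vertex of $C$ sees at most $\theta_2$ distinct colours. Lemma~\ref{lem.rainbow} then extracts from $C$ a rainbow subclique of size $s_0=s_0(|C|,\theta_2)$, which is a far more constrained object than a generic clique.

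For the first-moment step, I would let $R_s$ count the number of pairs $(U,\phi)$ with $U\subseteq V_1$, $|U|=s$, and $\phi$ a bijection from the edge set of $K_U$ onto some $\binom{s}{2}$-element subset of $W$ satisfying $\phi(uv)\in S_u\cap S_v$ for every edge $uv$. Independence of the sets $(S_v)_{v\in V}$ and the hypergeometric bound $\pr(\{w_1,\dots,w_k\}\subseteq S_v\mid X_v)\leq (X_v/m)^k$ give
\[
\E R_s\leq n^s\, m^{\binom{s}{2}}\Bigl(\E\bigl[(X_v/m)^{s-1}\I_{\theta_1\leq X_v\leq \theta_2}\bigr]\Bigr)^s.
\]
I would then estimate the inner expectation by passing to $Y=X\sqrt{n/m}$, using the tail hypothesis (\ref{eq.pldef2}), slow variation of $L$, and integration by parts. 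In the regime $s-1>\alpha$ (i.e.\ $s\geq 3$) the upper cutoff dominates, and the bound comes out proportional to $L(\sqrt{n\ln n})^s\,(n\ln n)^{s(s-1-\alpha)/2}\,(m/n)^{s(s-1)/2}$.

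The hard part will be choosing $s=s_0(n)$ so that this estimate is $o(1)$. A naive bound for $s$ constant reads roughly $(cK)^s(\ln n)^{s(s-1)/2}$, which does not decay because $K\to\infty$. Overcoming this requires $s_0\to\infty$ at a rate dictated by Lemma~\ref{lem.rainbow}, together with the strict inequality $\beta>2-\alpha$ to extract a negative $n$-exponent in $\E R_{s_0}/K^{s_0}$. The specific constant $24$ in the hypothesis $\eps_1<(\beta-2+\alpha)/24$ should emerge from bookkeeping three ingredients: the quantitative strength of Lemma~\ref{lem.rainbow} trading $|C|$ for $s_0$, the polynomial slack $n^{\eps_1}$ built into the cutoff $\theta_1$, and the $n$-exponent $s_0(2-\alpha)/2$ coming from the first-moment computation. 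Once $\E R_{s_0}=o(1)$ is established, Markov's inequality concludes $\omega(G_1)=o_P(K)$. This last tracking of exponents is the most delicate step, since it must be tight enough to use up exactly the gap $\beta-(2-\alpha)$.
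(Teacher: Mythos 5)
There is a genuine gap: the first-moment bound you propose provably does not tend to zero for any choice of $s$, so the core of the plan fails. The problem is the factor $m^{\binom{s}{2}}$ coming from the union bound over injective edge-to-attribute assignments. For two vertices of $V_1$ with set sizes near the upper cutoff $\theta_2$, the expected number of attributes covering their edge is $\theta_2^2/m=\Theta(\ln n)\gg 1$, so per edge your bound contributes a factor $\Theta(\ln n)$ rather than a probability less than $1$. Carrying out your computation gives $\E R_s \lesssim \frac{1}{s!}\bigl(cK\bigr)^s\bigl(c'\ln n\bigr)^{\binom{s}{2}}$: for constant $s$ this diverges because $K\to\infty$ (and no upper bound can fix this --- $G_1$ genuinely contains $\Theta(K)$ vertices with $X_v\ge\theta_2/2$ whose pairwise intersections have expected size $\Theta(\ln n)$, so rainbow $\ck_s$'s abound for every fixed $s$); and for $s\to\infty$ the term $e^{\frac{s(s-1)}{2}\ln\ln n}$ grows faster than $s!=e^{s\ln s-s}$ can compensate, so $\E R_s\to\infty$ for every $s\ge 3$. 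There is no window in which Markov's inequality applies. A secondary issue: you conflate ``each vertex sees at most $\theta_2$ distinct colours'' with $t$-goodness; Lemma~\ref{lem.rainbow} needs a bound on the \emph{multiplicity} of a single colour at a vertex, which comes from the monochromatic clique bound (Lemma~\ref{lem.ballsbins}), not from the set-size cutoff, and yields $t=n^{1-\alpha/2-\delta}$, forcing $h$ to grow polynomially --- the regime where the first moment is most hopeless. In fact the paper does not use Lemma~\ref{lem.rainbow} for $G_1$ at all; that lemma is reserved for $G_0$.

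The paper's actual route is designed precisely to avoid the union bound over attribute assignments. It further truncates $G_1$ at $\theta$ with $\theta^2=(1-\eps-\alpha/2)m\ln n<\theta_2^2$, bounds the $D$ vertices in between by $\E D\le (h(\eps)+o(1))K$ with $h(\eps)\to 0$ (this step is essential: even a sharp clique bound at cutoff $\theta_2$ would give order $2e^{\theta_2^2/m}\ln n\asymp K(\ln n)^{1+\alpha/2}$, which is \emph{not} $o(K)$), couples the truncated graph with a binomial random intersection graph $G({\bar n},m,{\bar p})$, splits off the ``big'' attributes into a sparse exceptional graph $R_n$ (Lemma~\ref{lem.G1general}), and then bounds the probability that a vertex set carries a rainbow clique by the probability that the intersections $\{{\bar S}_u\cap S_{u_i}\}$ admit a system of distinct representatives. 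Lemma~\ref{lem.DR} shows this is maximised for disjoint sets, giving the per-edge factor $1-(1-p)^M<1$ and reducing the count to the classical Erd\H{o}s--R\'enyi clique computation (Lemma~\ref{lem.planted}) with threshold ${\bar K}=2e^{mp^2}\ln n=2n^{1-\eps/2-\alpha/2}\ln n\ll K$. Replacing your union bound by such an SDR (or otherwise correlation-aware) argument is not a bookkeeping refinement but the essential missing idea.
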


\begin{proofof}{Theorem~\ref{thm.main}}
  We choose $0<\eps_1 < \min\{(\alpha - 1)/6, (\beta - 2 + \alpha)/24, \eps_0\}$.
The theorem follows from the inequalities
$\omega(G_2)\le \omega(G) \le \omega(G_0) + \omega(G_1) + \omega(G_2)$ 
and  Lemmas~\ref{lem.G2}, \ref{lem.G0}~and~\ref{lem.G1}.
\end{proofof}
\vskip 0.1 cm


\subsection{Proof of Lemma \ref{lem.G2}}\label{subsec.large}

In this section we use ideas from \cite{jln10} to give a lower bound on the clique number.
We first note the following 
auxiliary
facts.
\begin{lemma}\label{lem.lambertW}
  Suppose $a=a_n, b = b_n$ are sequences of positive reals such that $0 < \ln 2 b + 2 a \to +\infty$. 
  Let $z_n$ be the positive root of 
  \begin{equation} \label{eq.lambertW}
    a - \ln z - b z^2 = 0.
  \end{equation}
  Then $z_n \sim \sqrt{\frac{2 a + \ln (2 b)} {2b}}$.
\end{lemma}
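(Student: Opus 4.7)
The plan is to reduce the transcendental equation to a single-parameter form via substitution and then sandwich the solution between two explicit bounds. Before that I would briefly note that the positive root $z_n$ is well-defined and unique: the function $g(z) = a - \ln z - b z^2$ has $g'(z) = -1/z - 2bz < 0$ on $(0,\infty)$ with $g(0^+) = +\infty$ and $g(+\infty) = -\infty$, so $z_n$ is the unique positive zero.

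First I would introduce $u := 2 b z^2$. Multiplying \eqref{eq.lambertW} by $2$ and substituting $\ln z^2 = \ln u - \ln(2b)$ turns the equation into
\[
  u + \ln u = 2a + \ln(2b) =: t,
\]
with $t > 0$ and $t \to +\infty$ by hypothesis. The map $f(u) = u + \ln u$ is a strictly increasing bijection from $(0,\infty)$ onto $(-\infty,\infty)$, so $u = f^{-1}(t)$ is positive and uniquely determined, and $u \to +\infty$ as $t \to +\infty$. In particular, eventually $u > 1$, so $\ln u > 0$ and the identity $u = t - \ln u$ gives $u < t$, hence $\ln u < \ln t$, and plugging back yields $u > t - \ln t$. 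Combining the two inequalities,
\[
  1 - \frac{\ln t}{t} < \frac{u}{t} < 1,
\]
and since $t \to +\infty$ we conclude $u \sim t$. Undoing the substitution, $z_n^2 = u/(2b) \sim t/(2b) = (2a + \ln(2b))/(2b)$, and taking square roots (both sides positive) gives the claim $z_n \sim \sqrt{(2a + \ln(2b))/(2b)}$.

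There is no real obstacle here. The only point that requires attention is the assumption $\ln(2b) + 2a \to +\infty$, which is used exactly once, to guarantee that the reduced equation $u + \ln u = t$ has $t \to +\infty$; everything else is an elementary monotonicity argument.
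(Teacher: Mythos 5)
Your proof is correct and follows essentially the same route as the paper: the substitution $u=2bz^2$ reducing the equation to $u+\ln u = 2a+\ln(2b)$, followed by the observation that $u\sim t$. You simply make the asymptotic step explicit via the sandwich $t-\ln t< u< t$, which the paper leaves as "$t+\ln t\sim t$".
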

\begin{proof}
  Changing the variables $t = 2 b z^2$ we get
  \[
   t + \ln(t) = 2 a + \ln (2b).
  \]
  From the assumption it follows that $t + \ln t \sim t$ and therefore $z_n \sim \sqrt{\frac{2a + \ln(2b)}{2b}}.$
\end{proof}

\begin{lemma}[\cite{galambosseneta}] \label{lem.sv}
Let $x\to+\infty$. 
For any slowly varying function $L$  and any $0<t_1<t_2<+\infty$ the convergence
$L(tx)/L(x)\to 1$ is uniform in $t\in [t_1,t_2]$. Furthermore,  
we have
 $\ln  L(x) = o(\ln x)$.
\end{lemma}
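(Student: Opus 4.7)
The plan is to derive both assertions from the Karamata representation theorem, which states that every measurable slowly varying function $L$ admits, for some $A>0$, a representation
$$L(x)=c(x)\exp\Bigl(\int_A^x \frac{\epsilon(u)}{u}\,du\Bigr),\qquad x\ge A,$$
with $c(x)\to c\in(0,\infty)$ and $\epsilon(u)\to 0$ as $u\to\infty$. I would simply quote this as the central tool (it is one of the main results treated in \cite{galambosseneta}); with it in hand, both assertions follow from short estimates.

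For the uniform convergence assertion, observe that
$$\frac{L(tx)}{L(x)}=\frac{c(tx)}{c(x)}\cdot\exp\Bigl(\int_x^{tx}\frac{\epsilon(u)}{u}\,du\Bigr).$$
The prefactor tends to $1$ uniformly in $t\in[t_1,t_2]$ because $c(y)\to c>0$ as $y\to\infty$. For the exponent, bound
$$\Bigl|\int_x^{tx}\frac{\epsilon(u)}{u}\,du\Bigr|\le \Bigl(\sup_{u\ge x\min(t_1,1)}|\epsilon(u)|\Bigr)\cdot\max\bigl(|\ln t_1|,|\ln t_2|\bigr).$$
The right-hand side tends to $0$ as $x\to\infty$ uniformly in $t\in[t_1,t_2]$, which gives the desired uniform convergence.

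For the second assertion, take logarithms: $\ln L(x)=\ln c(x)+\int_A^x \epsilon(u)/u\,du$. Given $\delta>0$, pick $B\ge A$ with $|\epsilon(u)|<\delta$ for $u\ge B$. Splitting the integral at $B$, the part over $[A,B]$ contributes $O(1)$, while the part over $[B,x]$ is at most $\delta\ln(x/B)$ in absolute value. Combined with the boundedness of $\ln c(x)$ this yields $|\ln L(x)|\le \delta\ln x+O(1)$, so $\limsup |\ln L(x)|/\ln x\le \delta$; letting $\delta\to 0$ completes the argument.

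The only real obstacle is the representation theorem itself, which is classical but nontrivial, relying on measurability and a Cauchy-functional-equation style analysis of $h(s,t):=\lim_x\ln(L(e^{s+t} x)/L(e^s x))$. An alternative self-contained route would instead prove uniform convergence directly by applying an Egorov- or Baire-category-style argument to the pointwise-null sequence of measurable functions $t\mapsto \ln(L(tx)/L(x))$; but going through the representation is cleaner and delivers the second claim as a free byproduct.
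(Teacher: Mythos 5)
Your proof is correct, but note that the paper does not prove this lemma at all: it is stated with a citation to Galambos and Seneta and used as a known fact, so there is no internal argument to compare against. Your derivation from the Karamata representation theorem is the standard route and both estimates are sound: the exponent bound $\bigl|\int_x^{tx}\epsilon(u)u^{-1}\,du\bigr|\le \sup_{u\ge x\min(t_1,1)}|\epsilon(u)|\cdot\max(|\ln t_1|,|\ln t_2|)$ gives uniformity on $[t_1,t_2]$, and the split of $\int_A^x\epsilon(u)u^{-1}\,du$ at a threshold $B$ with $|\epsilon|<\delta$ beyond $B$ gives $|\ln L(x)|\le\delta\ln x+O(1)$, hence $\ln L(x)=o(\ln x)$. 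The one caveat worth flagging is that the representation theorem (and indeed the uniform convergence theorem itself) requires $L$ to be measurable, a hypothesis the paper's definition of slow variation omits; without measurability there are pathological counterexamples, so your proof, like the cited literature, implicitly assumes it. Since the lemma is an imported classical fact, this is a reasonable assumption, and your write-up would serve as a self-contained substitute for the citation.
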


\begin{proofof}{Lemma~\ref{lem.G2}}
Write $N = |V_2|$ and let
\[
v^{(1)}, v^{(2)}, \dots, v^{(N)}
\]
be the vertices of $V_2$ listed in an arbitrary order. 

Consider a greedy algorithm for finding a clique in $G$ proposed by Janson, \L uczak  and Norros \cite{jln10} (they use descending ordering by the set sizes, see also Section~\ref{sec.algo}). Let $A^{0} = \emptyset$. In the step $i = 1,2,\dots, N$ 
let $A^{i} = A^{i-1} \cup \{v^{(i)}\}$ if $v^{(i)}$ is incident to each of the vertices $v^{(j)}$, $j = 1,\dots,i-1$. Otherwise,
let $A^{i} = A^{i-1}$. 
This algorithm produces a clique $H$ on the set of vertices $A^N$, and $H$ demonstrates that $\omega(G_2) \ge |A^N|$.  

Write $\theta = \theta_2$ and
let $L_{\theta} = V_2 \setminus A^{N}$ be the set of 
vertices that failed to be added to $A^N$. 
We will show that 
\[
\frac {|L_{\theta}|} {N \vee 1} = o_P(1)
\]
and
\[
N = \left(1 - \alpha/2 \right)^{-\alpha/2} L\left ( (n \ln n)^{1/2} \right)  (\ln n)^{-\alpha/2} n^{1-\alpha/2} ( 1 - o_P(1)).
\] 
From (\ref{eq.pldef2}) we obtain for $N \sim Binom(n, q)$ with 
$q = \pr (X_n >  \theta)$
  \begin{align*}
      \E N = nq &= n\pr \left( (m/n)^{1/2} Y_n > \theta \right) 
  \\ &\sim L \left( (n/m)^{1/2} \theta \right) n^{1-\alpha/2} m^{\alpha/2} \theta^{-\alpha}
  \\ &\sim \left( 1 - \alpha/2 \right)^{-\alpha/2} L(\sqrt{n \ln n}) (\ln n)^{-\alpha/2} n^{1-\alpha/2} .
\end{align*}
Here we used  $L((n/m)^{1/2} \theta) \sim L(\sqrt {n \ln n})$
and $\ln L (\sqrt {n \ln n}) = o(\ln n)$, see Lemma~\ref{lem.sv}. 
Furthermore, by the concentration property of the binomial distribution, see, e.g.,
 (\ref{eq.chernoff}), 
we have $N=(1-o_P(1))\E N$.

The remaining bound $|L_{\theta}|/(N \vee 1) \le |L_{\theta}|/(N+1) = o_P(1)$ 
follows
 from 
 the bound
$\E (L_{\theta}/(N+1))=o(1)$, which is shown below.

  Let $p_1$ be the probability that two random 
independent subsets of $W=[m]$ of 
size $\lceil \theta \rceil$
do not intersect. 
  The number of vertices in $L_{\theta}$ is at most the number of pairs in $x,y \in V_2$ where $S_x$ and $S_y$ do not intersect. 
  Therefore by the first moment method
\[
  \E \frac{|L_{\theta}|}{N+1} = \E \E\left(\frac{|L_{\theta}|}{N+1} \Bigl| N\right)
\le 
\E \E \left ( \frac {\binom N 2 p_1} {N+1}  \Big| N \right) \le \frac {p_1\E N } 2,
  \]
where
  \[
  p_1 = \frac {\binom {m-\theta} \theta} {\binom m \theta} 
\le 
\left( 1-\frac \theta m \right)^\theta 
\le 
e^{-\theta^2/m}.
  \]
Now it is straightforward to check that for some constant $c$ we have 
$p_1 \E N  \le c (\ln n)^{-\alpha/2}  \to 0$. 
This completes the proof. 

Let us briefly explain the intuition for the choice of $\theta$. 
For simplicity assume $L(x) \equiv 1$ so that $e_1 = 0$. 
Could the same method yield a bigger clique if $\theta_2$ is smaller? 
We remark that the product $p_1\E N$ as well as its upper bound 
$n^{1-\alpha/2}m^{\alpha/2}\theta^{-\alpha}e^{-\theta^2/m}$ (which we used above)
are decreasing functions of $\theta$.
Hence, if we wanted this upper bound to be $o(1)$ then $\theta$ should be at least as large as the 
solution to the equation
\[
 n^{1-\alpha/2} m^{\alpha/2} \theta^{-\alpha} e^{-\theta^2/m} = 1
\]
or, equivalently, to the equation
\begin{equation}\label{eq.theta}
 \alpha^{-1} \ln n + \frac 1 2 \ln (m/n) - \ln \theta - \frac {\theta^2} {\alpha m} = 0.
\end{equation}
After we write the latter relation in the form
(\ref{eq.lambertW}) where
$a = \alpha^{-1} \ln n + (1/2) \ln (m/n)$ and $b =  (\alpha m)^{-1}$ 
satisfy
$b e^{2a} = \alpha^{-1} n^{\frac 2 \alpha - 1} \to +\infty$, we obtain from 
Lemma~\ref{lem.lambertW} that
the solution $\theta$
of (\ref{eq.theta}) satisfies
\[
\theta \sim \sqrt {\frac {(2/\alpha) \ln n - \ln (n/m) + \ln (2/\alpha m)} {2/\alpha m}} \sim \sqrt{(1-\alpha/2) m \ln n}.
\]
\end{proofof}


\subsection {Proof of Lemma \ref{lem.G0}}\label{subsec.small}

Before proving Lemma  \ref{lem.G0} we collect some preliminary 
results.
\begin{lemma}\label{lem.karonski}
  Let $h$ be a positive integer.
Let $\{G(n)\}$ be a sequence of binomial random intersection graphs
$G(n) = G(n,m,p)$, were $m = m(n)$ and $p = p(n)$ satisfy
  $p n^{1/(h-1)}m^{1/2} \to a \in \{0,1\}$. Then
  \[
  \pr(G \text{ contains a rainbow $\ck_h$}) \to a.
  \]
\end{lemma}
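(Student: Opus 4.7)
The plan is to use the first and second moment methods on the random variable $X$ counting \emph{rainbow} copies of $\ck_h$, formalised as the number of pairs $(T,\phi)$ with $T\subseteq V$, $|T|=h$, and $\phi\colon\binom{T}{2}\to W$ an injection such that $\phi(\{u,v\})\in S_u\cap S_v$ for every pair $\{u,v\}\subseteq T$. The key algebraic identity is that in the binomial RIG the events $\{w\in S_v\}$ are mutually independent across all $(v,w)\in V\times W$; for a fixed $(T,\phi)$ the event that $(T,\phi)$ is rainbow says exactly that each of the $h$ vertices of $T$ contains a specified set of $h-1$ distinct attributes, and these assignments involve $h(h-1)$ distinct vertex--attribute pairs, so the probability equals $p^{h(h-1)}$. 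Summing,
\[
\E X = \binom{n}{h}\binom{m}{\binom{h}{2}}\binom{h}{2}!\,p^{h(h-1)} = (1+o(1))\frac{1}{h!}\bigl(pn^{1/(h-1)}m^{1/2}\bigr)^{h(h-1)}.
\]

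When $pn^{1/(h-1)}m^{1/2}\to 0$, Markov's inequality yields $\pr(G \text{ contains a rainbow }\ck_h)\le \pr(X\ge 1)\le \E X=o(1)$, which handles the $a=0$ case. For the $a=1$ case I read the hypothesis as the complementary divergent regime $pn^{1/(h-1)}m^{1/2}\to\infty$ (at the literal value $a=1$ the expectation is only $1/h!+o(1)$, so the existence probability could not tend to $1$). In that regime I would apply the second moment method, expanding
\[
\E X^2 = \sum_{(T_1,\phi_1),\,(T_2,\phi_2)}\pr\bigl((T_1,\phi_1)\text{ and }(T_2,\phi_2)\text{ are both rainbow}\bigr),
\]
and partitioning the double sum according to the overlap pattern encoded by $k=|T_1\cap T_2|\in\{0,\ldots,h\}$ together with the set of edges in $\binom{T_1\cap T_2}{2}$ on which $\phi_1$ and $\phi_2$ coincide. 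Independence of attribute memberships factorises the joint probability into a power of $p$ whose exponent equals $2h(h-1)$ minus a deduction determined by the number of shared vertex--attribute pairs, while the number of configurations sharing a given pattern is a product of multinomial factors in $n$ and $m$. The disjoint term ($k=0$) contributes $(1+o(1))(\E X)^2$, and the task is to show every $k\ge 1$ contribution is $o((\E X)^2)$; Chebyshev's inequality then gives $\pr(X\ge 1)\to 1$.

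The main obstacle is the uniform overlap analysis for $k\ge 1$. For each pattern the ratio of its contribution to $(\E X)^2$ becomes a monomial whose exponent in the parameter $pn^{1/(h-1)}m^{1/2}$ is strictly negative (or a negative power of $n$ alone, in the sub-patterns where no attributes coincide), so the ratio tends to zero in the divergent regime. The worst case $k=h$ with $\phi_1=\phi_2$ (two fully coinciding configurations) contributes exactly $\E X$, which is still $o((\E X)^2)$ precisely because $\E X\to\infty$ here. Once these bounds are in place, $\mathrm{Var}\,X=o((\E X)^2)$ and the conclusion follows.
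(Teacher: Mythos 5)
Your computation of $\E X$ and the first-moment argument for $a=0$ coincide exactly with the paper's proof of that case (the paper uses the same bound $\binom{n}{h}(m)_{\binom h2}p^{2\binom h2}\le (pn^{1/(h-1)}m^{1/2})^{h(h-1)}$). For $a=1$ the paper gives no argument at all: it simply cites Claim 2 of Karo\'nski, Scheinerman and Singer-Cohen \cite{karonski99}, so your second-moment sketch is a genuinely different, self-contained route — essentially a reconstruction of what lies behind the cited claim. Your observation that the hypothesis cannot be read literally at $a=1$ is correct and worth stating: if $pn^{1/(h-1)}m^{1/2}\to 1$ then $\E X\to 1/h!$, so the containment probability stays bounded away from $1$, and the intended reading must be the divergent regime; note the paper only ever invokes the $a=0$ case (in the proof of Lemma~\ref{lem.G0}), so this imprecision is harmless for the rest of the paper. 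One caveat on your sketch: the claim that every overlap pattern contributes ``a monomial whose exponent in $pn^{1/(h-1)}m^{1/2}$ is strictly negative, or a negative power of $n$ alone'' is not accurate as stated. For example, two configurations sharing $k$ vertices together with the attributes of all $\binom k2$ edges among them contribute a ratio $n^{-k}(pm^{1/2})^{-k(k-1)}$, which must be rewritten as $n^{-k(h-k)/(h-1)}\bigl(pn^{1/(h-1)}m^{1/2}\bigr)^{-k(k-1)}$ before one sees that it vanishes, while a single shared vertex--attribute incidence gives a factor $(nmp)^{-1}$ and a shared attribute with no shared vertices gives $m^{-1}$. All of these do tend to $0$ in the divergent regime (using $m\to\infty$), but the bookkeeping over mixed patterns is more delicate than your one-line description suggests and would need to be written out for the $a=1$ half to be complete.
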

\begin{proof} 
  The case $a=1$ follows from Claim 2 of \cite{karonski99}. For the case $a=0$ we have,
 by the first moment method,
  \begin{align*}
  \pr(G \text{ contains a rainbow $\ck_h$}) &\le \binom n h (m)_{\binom h 2} p^{2\binom h 2} 
  \\ &  \le 
        \left( n^{1/(h-1)} m^{1/2} p \right)^{h(h-1)} \to 0.
  \end{align*}
\end{proof}
\medskip

Next is an upper bound
for the size $\omega'(G)$ of the largest monochromatic clique.
\begin{lemma}\label{lem.ballsbins} Let $1<\alpha<2$.
Assume that  $\{G(n)\}$ is a sequence of random 
 intersection graphs satisfying (\ref{eq.Y}), (\ref{eq.pldef2}).
   Suppose that for some $\beta > \alpha - 1$ 
we have $m=\Omega (n^\beta)$.
  Then there is a constant $\delta > 0$ such that 
$\omega'(G(n)) \le n^{1-\alpha/2 - \delta}$
    whp.
\end{lemma}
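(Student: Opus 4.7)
I would view $|T(w)| = |\{v: w \in S_v\}|$ as a bin load in the balls--into--bins process underlying $G(n,m,P)$, and split its contributions according to the set size $X_v = |S_v|$. Fix $\delta > 0$ satisfying $\delta < \tfrac{1}{4}(\beta - (\alpha-1))$ and $\delta/\alpha < \eps_0$; the target is $\max_{w \in W} |T(w)| \le n^{1-\alpha/2-\delta}$ whp.

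Choose a truncation level $\tau = \tau(n)$ so that $x_n := \tau\sqrt{n/m}$ lies in the power-law range $[n^{1/2-\eps_0}, n^{1/2+\eps_0}]$ and
\[
n\,\pr(Y(n) > x_n) \le \tfrac{1}{4}\, n^{1-\alpha/2-\delta}.
\]
By (\ref{eq.pldef2}) this amounts to $x_n \asymp L(x_n)^{1/\alpha}\, n^{\delta/\alpha}$, which is feasible because $L$ is slowly varying (Lemma~\ref{lem.sv}). Now decompose
\[
|T(w)| \le N^{\mathrm{hi}} + N_w^{\mathrm{lo}}, \qquad N^{\mathrm{hi}} = |\{v: X_v > \tau\}|, \qquad N_w^{\mathrm{lo}} = |\{v: X_v \le \tau,\ w \in S_v\}|,
\]
and note that $N^{\mathrm{hi}}$ does not depend on $w$.

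Since $N^{\mathrm{hi}} \sim \mathrm{Binom}(n, \pr(X(n) > \tau))$ has mean at most $\tfrac{1}{4}\,n^{1-\alpha/2-\delta}$, Chebyshev's inequality delivers $N^{\mathrm{hi}} \le \tfrac{1}{2}\,n^{1-\alpha/2-\delta}$ whp. For each $N_w^{\mathrm{lo}}$, I condition on the profile $(X_v)_{v \in V}$: the indicators $\I(w \in S_v)\I(X_v \le \tau)$ are then independent Bernoullis with per-term probability $X_v/m$, and their sum $N_w^{\mathrm{lo}}$ has conditional mean $\mu_w = m^{-1}\sum_{v:\,X_v \le \tau} X_v \le m^{-1}\sum_v X_v$. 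Because $\E\sum_v X_v = n\,\E X(n) = O(\sqrt{nm})$ by (\ref{eq.Y}), Markov's inequality gives $\mu_w = O((n/m)^{1/2}\ln n) = O(n^{(1-\beta)/2}\ln n)$ whp. The gap $\beta > \alpha - 1 + 2\delta$ ensures this is polynomially smaller than $n^{1-\alpha/2-\delta}$, so the Chernoff bound for Bernoulli sums yields
\[
\pr\bigl(N_w^{\mathrm{lo}} \ge \tfrac{1}{2}\,n^{1-\alpha/2-\delta}\;\big|\;(X_v)_v\bigr) \le \exp\bigl(-\Omega(n^{1-\alpha/2-\delta}\ln n)\bigr),
\]
which survives a union bound over the $m = n^{O(1)}$ colours. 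Combining both parts gives the claim.

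The main obstacle is the simultaneous calibration of $\tau$: it must be large enough that heavy vertices are rare (to control $N^{\mathrm{hi}}$), yet small enough that the conditional mean $\mu_w$ from low-size vertices stays polynomially below $n^{1-\alpha/2-\delta}$ so the Chernoff exponent dominates $\ln m$. The margin $\beta > \alpha - 1$ in the hypothesis provides exactly the polynomial slack needed for such $\tau$ and $\delta > 0$ to coexist; the slowly varying $L$ contributes only subpolynomial corrections via Lemma~\ref{lem.sv}.
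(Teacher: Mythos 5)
Your argument reaches the right conclusion but by a genuinely different and more elaborate route than the paper. The paper's proof does not truncate by set size and does not use the tail condition (\ref{eq.pldef2}) at all: it observes that \emph{unconditionally} $|T_w| \sim Binom(n, \E X/m) = Binom(n, \E Y/\sqrt{mn})$, since $\pr(w \in S_v) = \E X/m$ and the sets $S_v$ are independent. The crude bound $\pr(|T_w| \ge k) \le \binom{n}{k}(\E Y/\sqrt{mn})^k \le (c_1 k^{-1}\sqrt{n/m})^k$ followed by a union bound over $w \in W$ then finishes the proof with $k = \lceil n^{1-\alpha/2-\delta}\rceil$, using only (\ref{eq.Y}) and $m = \Omega(n^\beta)$ with $\beta > \alpha - 1$. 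Your decomposition into heavy vertices (controlled via (\ref{eq.pldef2}) and Chebyshev) and light vertices (controlled via a conditional Chernoff bound given the profile $(X_v)_v$) is sound, but it spends the power-law hypothesis where the mean alone suffices; what the paper's route buys is brevity and the realisation that no conditioning is needed.

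There are three loose ends in your write-up, one of which matters. First and mainly: the union bound step asserts ``$m = n^{O(1)}$ colours,'' but the hypotheses give only a \emph{lower} bound $m = \Omega(n^\beta)$, so $m$ may be superpolynomial and a bound of the form $m\exp(-\Omega(n^{1-\alpha/2-\delta}\ln n))$ need not vanish. This is repairable without new ideas: your conditional mean satisfies $\mu_w = O(\sqrt{n/m}\,\ln n)$, so the Chernoff bound $(e\mu_w/k)^k$ carries a factor $m^{-k/2}$ which swallows the union-bound factor $m$ once $k \ge 3$ --- exactly the mechanism the paper's computation uses --- but as written your exponent discards the $m$-dependence and the step fails for large $m$. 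Second, the calibration ``$x_n \asymp L(x_n)^{1/\alpha} n^{\delta/\alpha}$'' should read $n^{1/2+\delta/\alpha}$ (times a slowly varying correction); your surrounding constraints make the intent clear. Third, you should add $\delta < 1 - \alpha/2$ to your list of constraints so that $n^{1-\alpha/2-\delta} \to \infty$ and Chebyshev actually gives concentration of $N^{\mathrm{hi}}$; since the lemma only asks for some $\delta > 0$, this costs nothing.
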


\begin{proof}
Let $X = X(n)$  and $Y = Y(n)$ be defined as in (\ref{eq.Y}).
  Since for any  $w \in W$ and  $v \in V$ 
  \[
  \pr (w \in S_v) 
= 
\sum_{k=0}^\infty \frac k m \pr (|S_v| = k) = \frac {\E X} m = \frac{ \E Y} {\sqrt{mn}},
  \]
and the  number of elements of  the set $T_v=\{v:\, w\in S_v\}$ is binomially distributed 
  \begin{equation}\label{eq.mono}
    |T_w| \sim Binom \left(n, \frac {\E Y} {\sqrt {m n}}\right),
  \end{equation}
  we have, for any positive integer $k$
 \[
 \pr (|T_w| \ge k) 
\le 
\binom n k \left ( \frac {\E Y} {\sqrt{mn}}\right)^k 
\le 
\left(\frac{en}{k}\right)^k\left( \frac {\E Y} {\sqrt{mn}} \right)^k 
\le  
\left( \frac {c_1} k \sqrt {\frac n m}\right)^k
 \]
for $c_1 = e \sup_n \E Y$. Therefore, by the union bound,
\[
\pr \left( \omega'(G(n)) \ge k \right) \le m \left(\frac {c_1} k \sqrt{\frac n m}\right)^k.
\]
Fix $\delta$ with $0 < \delta < \min ( (\beta-\alpha + 1)/4, 1- \alpha/2, \beta/2)$.
We have 
\begin{align*}
    &\pr \left( \omega'(G(n)) \ge n^{1-\alpha/2 - \delta} \right) \le m \left( c_1 n^{\alpha/2 - 1/2 + \delta} m^{-1/2} \right)^{\left \lceil n^{1-\alpha/2 - \delta} \right \rceil}
    \\  &= m^{1 - (\delta/\beta) \left \lceil n^{1-\alpha/2 - \delta} \right \rceil} \left( c_1 n^{\alpha/2 - 1/2 + \delta} m^{-1/2 + \delta/\beta} \right)^{\left \lceil n^{1-\alpha/2 - \delta} \right \rceil}\to 0
\end{align*}
since $m \to \infty$, $n^{1-\alpha/2 - \delta} \to \infty$ 
and $m=\Omega(n^{\beta})$ implies
\[
n^{\alpha/2 - 1/2 + \delta} m^{-1/2 + \delta/\beta} 
\to 0.
\]
\end{proof}

\vskip 0.1 cm

The last and the most important fact we need relates the maximum clique 
size with the maximum rainbow clique 
size in an  intersection graph. 
An edge-colouring of a graph is called
$t$-good if each colour appears at most $t$ times at each vertex.
We say that an edge-coloured graph contains a rainbow copy of $H$ if it
 contains a subgraph isomorphic to $H$ with all edges receiving different colours.
\begin{lemma}[\cite{alon03}] \label{lem.rainbow}
  There is a constant $c$ such that
  every $t$-good coloured complete graph on more than $\frac{c t h^3} {\ln h}$ vertices contains a rainbow copy of $\ck_h$.
\end{lemma}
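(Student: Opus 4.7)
The plan is to prove the lemma by the probabilistic method with alteration, sharpened by a local-lemma argument to capture the logarithmic factor. Call an unordered pair of same-coloured edges a \emph{conflict}; a set $S$ of vertices spans a rainbow $\ck_h$ in the complete graph precisely when $|S|\ge h$ and no two edges inside $S$ share a colour.

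First I would sample $S$ uniformly among the $s$-subsets of the $N$ vertices (with $s$ slightly larger than $h$) and estimate the expected number of conflicts with both edges inside $S$. Decompose them into \emph{shared-vertex} conflicts (two edges meeting at a common vertex) and \emph{disjoint} conflicts (four distinct endpoints). At each vertex the $t$-goodness hypothesis bounds the number of same-colour edge pairs by $(N-1)(t-1)/2$, since each colour appears at most $t$ times, so globally there are $O(N^2 t)$ shared-vertex conflicts; for disjoint conflicts, each colour class $F_c$ has maximum degree at most $t$ and hence at most $Nt/2$ edges, yielding $\sum_c \binom{|E(F_c)|}{2} = O(N^3 t)$ via $\sum_c |E(F_c)| = \binom{N}{2}$. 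The inclusion probability $\binom{N-k}{s-k}/\binom{N}{s} = O((s/N)^k)$ for $k=3,4$ then gives $\E[\text{conflicts}(S)] = O(ts^3/N) + O(ts^4/N) = O(ts^4/N)$ for $s\ge 1$. Deleting one vertex per surviving conflict leaves a rainbow set of expected size $s - O(ts^4/N)$; the choice $s = 2h$ and $N \ge Cth^3$ already produces a rainbow $\ck_h$, so this naive alteration handles the statement without the $\ln h$ saving.

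To recover the factor $\ln h$, the argument must be sharpened, and this is the main obstacle. My preferred route is the Lov\'asz Local Lemma applied to an independent-vertex-selection model: retain each vertex with probability $p = s/N$, treat each disjoint conflict as a bad event of probability $\asymp p^4$, and use that each such event is mutually independent of all bad events whose four defining vertices are disjoint from its own. Calibrating the dependency degree is delicate because individual vertices may participate in many conflicts; a standard workaround is to first remove (by a preliminary deletion) the few ``heavy'' vertices or colour classes that are responsible for most conflicts, and then invoke the symmetric LLL on the residual light events, where the dependency degree becomes tame enough to allow $s$ to be a factor $\ln h$ larger than in the first-moment bound. A slightly different but equivalent route is to use Talagrand-style concentration for $\text{conflicts}(S)$ as a Lipschitz function of the random subset, with a union bound over $O(\log h)$ dyadic scales, converting the multiplicative slack in the first-moment alteration into the promised factor of $\log h$.
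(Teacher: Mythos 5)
There is a genuine gap. First, note that the paper does not prove this lemma at all: it is imported verbatim from Alon, Jiang, Miller and Pritkin \cite{alon03}, so there is no internal proof to match. Your first-moment alteration argument is correct as far as it goes --- the counts $O(N^2t)$ for shared-vertex conflicts and $O(N^3t)$ for disjoint ones, the inclusion probabilities, and the deletion step are all fine --- but, as you yourself concede, it only yields the threshold $N\ge Cth^3$, which is \emph{weaker} than the stated bound $cth^3/\ln h$ by exactly the factor the lemma is about. The remainder of your proposal is a plan, not a proof. The Local Lemma route as sketched does not close the gap: the shared-vertex conflicts are bad events of probability $\asymp p^3$ (not $p^4$) with dependency degree of order $N^2t$, and the required inequality $e\,p^3\cdot N^2t\le 1$ forces $s\lesssim (N/t)^{1/3}N^{1/3}$, which does not recover the $\ln h$ saving; moreover, the symmetric LLL gives positive probability of avoiding all conflicts but does not let you simultaneously condition on retaining at least $h$ vertices, and your ``remove the heavy vertices first'' step is exactly the part that would need a quantitative argument. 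The Talagrand alternative is also not viable as described: concentration controls fluctuations of the conflict count around its mean, but the mean itself is already too large by a factor of $\ln h$ at the target value of $N$, and no concentration inequality can lower an expectation.

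For what it is worth, in every application in this paper the parameter $h$ is bounded ($h$ is a fixed constant in Lemma~\ref{lem.G0}, and $h=h(R+1)=O_P(1)$ in Theorem~\ref{thm.finite}), so the weaker bound $cth^3$ that you do prove completely would suffice for the paper's purposes. But as a proof of the lemma as stated, the $\ln h$ improvement --- which is the substance of the Alon--Jiang--Miller--Pritkin result --- is missing.
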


\bigskip

\begin{proofof}{Lemma~\ref{lem.G0}}
Fix an integer $h > 1 + \frac 1 {\eps_1}$ and denote 
$t=n^{1- \alpha/2 - \delta}$
and $k=\lceil \frac{c t h^3} {\ln h}\rceil$, where positive constants $\delta$ and $c$ are from Lemmas
\ref{lem.ballsbins}
and \ref{lem.rainbow}, respectively.

We first show that  
\begin{equation}\label{eq.npkh}
     \pr(G_0 \text{ contains a rainbow } \ck_h) =o(1).
   \end{equation}
We note that for the  binomial intersection graph ${\tilde G}=G(n,m,p)$  with 
$p = p(n)= m^{-1/2} n^{-\eps_1} + m^{-2/3}$ Lemma~\ref{lem.karonski} implies
\begin{equation}\label{eq.npkh+}
     \pr({\tilde G} \text{ contains a rainbow } \ck_h) = o(1).
   \end{equation}
 Let ${\tilde S}_v$ (respectively $S_v$), $v\in V$,
denote the random 
subsets prescribed to vertices of ${\tilde G}$ (respectively $G(n)$).
Given the set sizes $|S_v|, |{\tilde S}_v|$, $v\in V$, satisfying $|{\tilde S}_v|>\theta$, 
for each $v$, 
we 
couple  the random sets of $G_0$ and ${\tilde G}$ so that $S_v\subseteq {\tilde S}_v$, 
for all $v\in V_0$. Now $G_0$ becomes a subgraph of ${\tilde G}$ and 
(\ref{eq.npkh}) follows from (\ref{eq.npkh+}) and the fact that 
$\min_{v}|{\tilde S}_v|>\theta$ whp, 
see (\ref{eq.barB}).

Next, we colour every edge $x\sim y$  of $G_0$ by an arbitrary element of
  $S_x \cap S_y$ and observe that the inequality $\omega'(G(n))\le t$ (which holds
with probability $1-o(1)$, by Lemma~\ref{lem.ballsbins}) implies that
the colouring obtained is $t$-good. Furthermore, by Lemma~\ref{lem.rainbow}, every 
$k$-clique of 
$G_0$ contains a rainbow clique; however the probability of the 
latter event is negligibly small  by (\ref{eq.npkh}). We conclude that 
$\pr(\omega(G_0)\ge k)=o(1)$ thus proving the lemma.
\end{proofof}



\subsection {Proof of Lemma~\ref{lem.G1}}
\label{subsec.intermediate}

We start with a combinatorial lemma which is of independent interest.
\begin{lemma}\label{lem.DR}
Given positive integers $a_1,\dots, a_k$, let 
$\{A_1, \dots, A_k\}$ be a family of subsets of $[m]$ of sizes $|A_i|=a_i$.
Let $d\ge k$ and let $S$ be a random subset of $[m]$ of size $d$.
Suppose that $a_1+\dots+a_k\le m$. Then the probability
  \begin{equation}\label{eq.DR}
    \pr \left( \{S \cap A_1, \dots, S\cap A_k\} \text{ has a system of distinct representatives} \right)
  \end{equation}
  is maximised when $\{A_i\}$ are mutually disjoint.
\end{lemma}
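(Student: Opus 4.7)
The plan is to prove the lemma by an iterated compression argument that makes the family more disjoint one swap at a time. Suppose $\{A_1,\dots,A_k\}$ is not disjoint. Since $|A_1\cup\cdots\cup A_k|<a_1+\cdots+a_k\le m$, there exists $y\in[m]\setminus\bigcup_i A_i$; pick any $x\in A_i\cap A_j$, and after relabelling assume $(i,j)=(1,2)$. Set $A_2'=(A_2\setminus\{x\})\cup\{y\}$ and $A_i'=A_i$ for $i\ne 2$. This swap preserves the sizes $a_1,\dots,a_k$, and because $y\notin\bigcup_i A_i$ it strictly decreases $\sum_{i<j}|A_i\cap A_j|$ without raising any pairwise overlap. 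So once I show that a single swap does not decrease the probability in (\ref{eq.DR}), iterating finitely many times produces a mutually disjoint family with at least the original probability, proving the lemma.

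For the single-swap step I would use the involution $\phi$ on $\binom{[m]}{d}$ that sends $S$ to $S\triangle\{x,y\}$ when $S$ contains exactly one of $x,y$, and fixes $S$ otherwise. Writing $f(T)=\I[\{T\cap A_i\}_{i=1}^k\text{ has an SDR}]$ and $f'(T)$ for the analogous indicator with each $A_i$ replaced by $A_i'$, it suffices to show that for every $\phi$-orbit
\[
f(S)+f(\phi(S))\ \le\ f'(S)+f'(\phi(S)),
\]
since summing over the orbits partitioning $\binom{[m]}{d}$ then gives $\sum_S f(S)\le\sum_S f'(S)$.

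I would verify the orbit inequality by cases on whether $x,y$ belong to $S$. If $\{x,y\}\cap S=\emptyset$, then $S\cap A_2=S\cap A_2'$ and $f(S)=f'(S)$. If $\{x,y\}\subseteq S$, any SDR for the original family either avoids $s_2=x$---and so survives the swap, since $(S\cap A_2)\setminus\{x\}\subseteq S\cap A_2'$---or reroutes $s_2$ to $y\in S\cap A_2'$, a move allowed because $y$ lies in no other $A_i$; hence $f(S)\le f'(S)$. The nontrivial orbits are the two-element ones. For $x\in S,\ y\notin S$ with $S'=\phi(S)$, writing $J_i=S\cap A_i$, $J_i^-=J_i\setminus\{x\}$ and $I_i=S\cap A_i$ for $i\ge 3$, the four indicators $f(S),f(S'),f'(S),f'(S')$ concern the families $(J_1,J_2,I_3,\dots)$, $(J_1^-,J_2^-,I_3,\dots)$, $(J_1,J_2^-,I_3,\dots)$, and $(J_1^-,J_2^-\cup\{y\},I_3,\dots)$ respectively. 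Since $y$ is free (it appears only in the last family), $f'(S')$ equals the indicator that $(J_1^-,I_3,\dots,I_k)$ admits an SDR (just take $s_2=y$). Superset monotonicity gives $f(S)\ge f'(S)$ and $f'(S')\ge f(S')$, so the orbit inequality can fail only when $f(S)=1$ and $f'(S)=0$; but then every SDR of $(J_1,J_2,I_3,\dots)$ must use $s_2=x$, which forces $s_1\in J_1^-$ and yields an SDR for $(J_1^-,I_3,\dots,I_k)$, i.e.\ $f'(S')=1$, closing the case. The mirror case $x\notin S,\ y\in S$ is handled by the same recipe with $x$ playing the ``free'' role that $y$ plays above.

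The hard part will be the per-orbit analysis in the two two-element cases: although the indicators are Boolean, deducing the required relations among the four families $(J_1,J_2)$, $(J_1^-,J_2^-)$, $(J_1,J_2^-)$, $(J_1^-,J_2^-\cup\{y\})$ (with $I_3,\dots,I_k$ appended) needs a careful Hall-theorem bookkeeping. Once this per-orbit inequality is in place, summing over orbits gives the single-swap comparison, and iteration completes the proof.
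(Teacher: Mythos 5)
Your proof is essentially the paper's argument: both reduce the lemma to a single swap of a shared element $x$ for a fresh element $y\notin\bigcup_iA_i$, and both compare the two families by pairing each configuration $S$ with $S\triangle\{x,y\}$ --- the paper packages this as an injection from $\cc_{DR}(\cf)\setminus\cc_{DR}(\cf')$ into $\cc_{DR}(\cf')\setminus\cc_{DR}(\cf)$ inside a minimal-counterexample argument, while you package the same computation as a per-orbit inequality inside an iterated compression. Two small repairs to your two-element-orbit case: the components $I_3,\dots,I_k$ are not fixed under $S\mapsto S'$ (one must use $I_i\setminus\{x\}$, which changes nothing essential since the constructed SDR avoids $x$); and when $f(S)=1$, $f'(S)=0$ you establish $f'(S')=1$ but must also note $f(S')=0$ to get $f(S)+f(S')\le f'(S)+f'(S')$ --- this is immediate because the family defining $f(S')$ is contained componentwise in the one defining $f'(S)$, so the monotonicity you already invoked closes the case.
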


\begin{proof}
  Call any of $\binom m d$ possible outcomes $c$ for $S$ a configuration.
  Given $\cf = \{A_1, \dots, A_k\}$ let $\cc_{DR}(\cf)$ be the set of  configurations 
$c$ 
such that $c \cap \cf = \{c \cap A_1, \dots, c\cap A_k\}$ has 
a system of distinct representatives. Write
\[
p(\cf) = \sum_{1 \le i < j \le k} |A_i \cap A_j|.
\]
Suppose the claim is false. Out of all families
that maximize (\ref{eq.DR}) pick a family $\cf$ with smallest $p(\cf)$.
Then $p(\cf) > 0$ and we can assume that
there is an element $x \in [m]$ 
such that  $x \in A_1 \cap A_2$. 
  Since $\sum_{i=1}^k |A_i| \le m$, there is an element $y$ in 
the complement of $\bigcup_{A\in \cf} A$.

  Define $A_1' = (A_1 \setminus \{x\}) \cup \{y\}$ and 
consider the family  $\cf' = \{A_1', A_2, \dots, A_k\}$.
Observe that the family of configurations $\cc = \cc_{DR}(\cf) \setminus \cc_{DR}(\cf')$ 
has the following property: for each  $c \in \cc$ we have
$x\in c$  
and
it is not possible to find a set of distinct representatives for $c \cap \cf$ 
where $A_1$ is matched
with an element other than $x$ (indeed such a set of distinct representatives, 
if existed,
would imply $c\in \cc_{DR}(\cf')$). Consequently, there is a set of distinct representatives
for sets $c \cap A_2,\dots, c\cap A_k$ which does not use $x$. Since the latter set of 
distinct representatives together with $y$ is a set of distinct representatives
for $c\cap \cf'$, we conclude that $c\not\in \cc_{DR}(\cf')$ implies $y\notin c$.

Now, for $c \in \cc$, let $c_{xy}=(c\cup\{y\})\setminus\{x\}$ 
be the configuration with  $x$ and $y$ swapped. 
Then $c_{xy} \not \in \cc_{DR}(\cf)$ and  $c_{xy} \in \cc_{DR}(\cf')$,
 because $y\in c_{xy}$ and can be matched with $A_1$. Thus
 each  configuration $c \in \cc$ is 
 assigned
a unique  
configuration $c_{xy} \in \cc_{DR}(\cf') \setminus \cc_{DR}(\cf)$.
This shows that $|\cc_{DR}(\cf')| \ge |\cc_{DR}(\cf)|$. But
$p(\cf') \le p(\cf) - 1$, which contradicts our assumption
about the minimality of~$p(\cf)$.
\end{proof}

\medskip

The next lemma is a version of a result of Erd\H{o}s and R\'{e}nyi about the maximum 
clique of the binomial random graph $G(n,p)$ (see, e.g., \cite{jlr}). 
\begin{lemma} \label{lem.planted}
Let $n\to+\infty$. Assume that probabilities $p_n \to 1$.
Let  $\{r_n\}$ be a positive sequence,
satisfying 
$r_n = o ({\tilde K}^2)$, where ${\tilde K}=\frac { 2 \ln n} {1-p_n}$.

There are positive sequences $\{\delta_n\}$ and $\{\eps_n\}$ 
converging to zero, such that  $\delta_n{\tilde K}\to+\infty$ and 
for 
any
 sequence of 
non-random
graphs $\{R_n\}$ 
with $V(R_n) = [n]$ and $e(R_n) \le r_n$ 
the number $X_n$ of
cliques of size $\lfloor {\tilde K} (1 + \delta_n) \rfloor$ in $G(n,p_n) \vee R_n$ satisfies
\[
 \E X_n \le \eps_n.
\]
\end{lemma}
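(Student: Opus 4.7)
The plan is to bound $\E X_n$ via a first-moment computation that absorbs the planted graph $R_n$ through an expansion indexed by subgraphs of $R_n$. Set $k=k_n:=\lfloor \tilde K_n(1+\delta_n)\rfloor$. A fixed $k$-set $S$ induces a clique in $G(n,p_n)\vee R_n$ with probability $p_n^{\binom{k}{2}-e_{R_n}(S)}$, so $\E X_n = p_n^{\binom{k}{2}}\sum_{|S|=k} p_n^{-e_{R_n}(S)}$. I will write $q_n:=p_n^{-1}-1$, expand $p_n^{-e_{R_n}(S)}$ as a sum of $q_n^{|F|}$ over $F \subseteq E(R_n) \cap \binom{S}{2}$, swap the two sums, and use $\binom{n-v}{k-v}/\binom{n}{k}\le (k/n)^v$ (valid for $k\le n$) together with $v(F)\ge 2$ for every nonempty $F$ to arrive at
\[
  \E X_n \;\le\; p_n^{\binom{k}{2}}\binom{n}{k}\bigl[\,1+(k_n/n)^2(p_n^{-r_n}-1)\,\bigr].
\]

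Next I will take logarithms. The routine bounds $\binom{n}{k}\le (en/k)^k$ and $\ln p_n\le -(1-p_n)$, together with the identity $\tilde K_n(1-p_n)=2\ln n$, give $\ln(p_n^{\binom{k}{2}}\binom{n}{k}) \le -k_n\delta_n\ln n - k_n\ln k_n + O(k_n)$. For the correction factor I will use $\ln(1+x)\le \ln 2+(\ln x)_+$ and $-\ln p_n\le 2(1-p_n)$ (valid once $p_n\ge 1/2$), obtaining $\ln[1+(k_n/n)^2(p_n^{-r_n}-1)]\le \ln 2 + 2 r_n(1-p_n)$ after discarding the nonpositive term $2\ln(k_n/n)$. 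Summing:
\[
  \ln \E X_n \;\le\; -k_n\delta_n\ln n - k_n\ln k_n + O(k_n) + 2 r_n(1-p_n).
\]

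To finish I will choose $\delta_n$ explicitly. The hypothesis $r_n=o(\tilde K_n^2)$ is equivalent to $r_n(1-p_n)^2=o(\ln^2 n)$, which motivates the choice
\[
  \delta_n := 4\, r_n(1-p_n)^2/\ln^2 n \;+\; 1/\log(\tilde K_n+10).
\]
One readily checks that $\delta_n\to 0$, $\delta_n\tilde K_n\ge \tilde K_n/\log(\tilde K_n+10)\to\infty$, and (using $k_n\ge \tilde K_n$ eventually together with $\tilde K_n/\ln n = 2/(1-p_n)$) $k_n\delta_n\ln n \ge 8 r_n(1-p_n) + \tilde K_n\ln n/\log(\tilde K_n+10)$, which dominates $2 r_n(1-p_n)$. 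Together with the $-k_n\ln k_n$ contribution this sends $\ln \E X_n \to -\infty$, so $\E X_n\to 0$ and any $\eps_n\to 0$ with $\eps_n\ge \E X_n$ works.

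The main obstacle is the simultaneous coordination of three constraints on $\delta_n$: it must tend to zero, $\delta_n\tilde K_n$ must diverge, and $\delta_n$ must be large enough to absorb the positive contribution caused by the planted edges of $R_n$. The quantitative hypothesis $r_n=o(\tilde K_n^2)$ is precisely what permits a common solution. The crude bound $v(F)\ge 2$ in the expansion is sufficient here because we only need $\E X_n\to 0$; sharper bounds exploiting the actual structure of $R_n$ (and hence giving a faster-decaying $\delta_n$) are not needed.
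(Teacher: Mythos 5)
Your proof is correct, and its skeleton (first moment over $k$-sets, $\binom nk\le(en/k)^k$, $\ln p_n\le-(1-p_n)$, then tuning $\delta_n$ against the hypothesis $r_n=o(\tilde K^2)$) coincides with the paper's. The one step you handle differently is the absorption of the planted graph $R_n$: the paper simply notes that $e_{R_n}(S)\le e(R_n)\le r_n$ for every $k$-set $S$, hence $\E X_n\le\binom nk p^{\binom k2-r_n}$, which after taking logarithms contributes an additive error $r_n(1-p_n)$; you instead expand $p^{-e_{R_n}(S)}=\sum_{F\subseteq E(R_n)\cap\binom S2}(p^{-1}-1)^{|F|}$, swap sums over $S$ and $F$, and use $v(F)\ge2$ to isolate the correction factor $1+(k/n)^2(p^{-r_n}-1)$. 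Since you then discard the favourable factor $(k/n)^2$ via $2\ln(k/n)\le0$, your correction $\ln 2+2r_n(1-p_n)$ is of exactly the same order as the paper's $r_n(1-p_n)$, so the extra combinatorial machinery buys nothing here --- the binding constraint in both arguments is $\delta_n\gg r_n/\tilde K^2$, which is precisely what $r_n=o(\tilde K^2)$ permits while keeping $\delta_n\to0$. Your explicit choice of $\delta_n$ is a pleasant concretization of the paper's implicit requirement $\ln^{-1}n+(1-p_n)+r_n/\tilde K^2=o(\delta_n)$, and, like the paper's bound, yours depends on $R_n$ only through $r_n$, so the resulting $\eps_n$ is uniform over the admissible $\{R_n\}$ as required.
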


\begin{proof}
  Write $p=p_n, r=r_n$ and $h = 1-p$. Pick a positive sequence 
$\delta=\delta_n$ so that $\delta_n \rightarrow 0$ and 
$\ln^{-1} n + h +  \frac r {{\tilde K}^2} = o(\delta)$.
  Let $a = \left \lfloor {\tilde K}(1+\delta) \right \rfloor$.
  We have
 \begin{equation}\label{2013-07-27}
  \E X_n 
\le 
\binom n a p^{\binom a 2 - r} 
\le 
\left( \frac {e n} a \right)^a p^{\frac {a(a-1)} 2 - r}=e^{aB},
 \end{equation}
where, by the inequality $\ln p\le -h$, for $n$ large enough,
\begin{align*}
 B
&\le \ln (en/a)  -\left(\frac {a-1} 2 - \frac r a  \right)h
\\& \le \ln n - \frac {ah}2 + \frac {rh}{a}
\le 
 (-1+o(1))\delta\ln n  \rightarrow -\infty.
\end{align*}
\end{proof}

\vskip 0.2 cm


\begin{lemma}\label{lem.plantedrig}
  Let $\{G(n)\}$ be a sequence of binomial random intersection graphs, 
where $m=m_n\to+\infty$
and $p=p_n\to 0$ as $n\to+\infty$.
Let $\{r_n\}$ be a sequence of positive integers.
 Denote  ${\bar K} = 2 e^{mp^2}\ln n$. Assume that  $r_n\ll{\bar K}^2$ and
\begin{equation}\label{2013_08_17.mpK}
mp^2\to+\infty,
\qquad
\ln n\ll mp,
\qquad 
{\bar K}p\to 0,
\qquad
{\bar K}\le n/2.
\end{equation}

  There are positive sequences $\{\eps_n\}, \{\delta_n\}$ 
converging to zero such that $\delta_n{\bar K}\to+\infty$ and
  for any non-random graph sequence $\{R_n\}$ with $V(R_n)=V(G(n))$
and $e(R_n) \le r_n$
\begin{equation}\label{2013_08_16.lem.plantedrig}
\pr \left( Rainbow(G(n), R_n, {\bar K}(1+\delta_n)) \right) \le \eps_n, 
\qquad 
n = 1, 2, \dots
\end{equation}
\end{lemma}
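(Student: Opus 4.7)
The plan is to apply the first moment method. Set $a = \lceil \bar K(1+\delta_n)\rceil$ and, for each $a$-subset $T\subseteq V$, write $E_0(T) = \binom{T}{2}\setminus E(R_n)$, so that $|E_0(T)|\ge \binom{a}{2}-r_n$. The task reduces to the uniform estimate
\begin{equation*}
  \pr\bigl(T\text{ is a rainbow clique in }G(n)\vee R_n\bigr)\le q^{|E_0(T)|}(1+o(1)),
\end{equation*}
where $q = q_n := 1-(1-p^2)^m$ is the single-edge probability in $G(n,m,p)$. Granted this estimate, summing over the $\binom{n}{a}$ choices of $T$ and paralleling the computation in Lemma~\ref{lem.planted} with $q$ playing the role of $p$ there---using $\binom{n}{a}\le (en/a)^a$ together with $1-q\sim e^{-mp^2}=(2\ln n)/\bar K$ (valid because $\bar Kp\to 0$ forces $mp^4\to 0$)---collapses the logarithm of the first moment to $-2\delta_n(1+\delta_n)e^{mp^2}\ln^2 n\,(1+o(1))$, which tends to $-\infty$ provided $\delta_n\bar K\to\infty$; a sequence $\delta_n\to 0$ with this property is available because $r_n\ll\bar K^2$.

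To establish the key estimate, I would enumerate $T=\{v_1,\dots,v_a\}$ and reveal the attribute sets $S_{v_i}$ in order. At step $i+1$, given the history together with a set $W_i\subseteq[m]$ of $|W_i|\le\binom{i}{2}$ attributes already committed to the prior edges of $E_0(T)$, a rainbow extension to the new edges $\{v_jv_{i+1}:j\le i,\ v_jv_{i+1}\in E_0(T)\}$ requires a system of distinct representatives for $\{S_{v_{i+1}}\cap A_j\}_j$ with $A_j:=S_{v_j}\setminus W_i$. Conditioning on the history, Lemma~\ref{lem.DR} bounds this conditional SDR probability above by its value in the scenario where the $A_j$'s are replaced by pairwise disjoint subsets of $[m]$ of the same cardinalities---the hypothesis $\sum_j|A_j|\le m$ follows from $\bar Kp\to 0$ combined with the Chernoff concentration $|S_v|=mp(1+o(1))$ uniformly over $v\in V$ (valid because $\ln n\ll mp$). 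In the disjoint scenario the SDR becomes the joint event that $S_{v_{i+1}}$ hits each of the $A_j^{\mathrm{disj}}$; by the negative association of the hypergeometric indicators $(\I(w\in S_{v_{i+1}}))_{w\in[m]}$, this joint probability is at most the product of the single-hit probabilities, each of which equals $q\,(1+o(1))$ (using $|W_i|\le\binom{a}{2}=o(m)$, itself a consequence of $\bar K^2/m=(\bar Kp)^2/(mp^2)\to 0$). Multiplying the bounds over $i=1,\dots,a-1$ and using that the step sizes sum to $|E_0(T)|$ yields the estimate.

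The main obstacle is a quantitative accounting of the multiplicative $(1+o(1))$ error accumulated across the $\sim|E_0(T)|$ single-hit factors. One must track the correction term $p^2|W_i|$ arising when expanding $(1-|A_j|/m)^{|S_{v_{i+1}}|}$ around $(1-p)^{mp}$, and verify, via the joint action of the four hypotheses $mp^2\to\infty$, $\ln n\ll mp$, $\bar Kp\to 0$, $\bar K\le n/2$, that the resulting cumulative error $O((\bar Kp)^2 e^{mp^2}\ln^2 n)$ is dominated by the main negative term $-2\delta_n e^{mp^2}\ln^2 n$. This amounts to requiring $\delta_n\gg(\bar Kp)^2$ in addition to $\delta_n\bar K\to\infty$, and a sequence $\delta_n\to 0$ meeting all three constraints simultaneously exists because both $(\bar Kp)^2$ and $1/\bar K$ tend to zero.
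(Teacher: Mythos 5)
Your overall strategy is the same as the paper's: a first-moment bound over $a$-sets, a vertex-by-vertex revelation in which Lemma~\ref{lem.DR} reduces each step to disjoint sets, a product bound on the resulting hit probabilities, and the computation of Lemma~\ref{lem.planted} with the edge probability in the role of $p$. Two of your choices differ only technically from the paper's: you invoke negative association of the fixed-size-subset indicators where the paper passes to Bernoulli$(p)$ inclusions after dropping the conditioning on $|S_{v_i}|$, and you track multiplicative $(1+o(1))$ errors against $q=1-(1-p^2)^m$ where the paper truncates all sets to size $M=\lfloor mp+x_n\rfloor$ and works exactly with $(1-p)^M$.

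One step, however, does not work as written: conditioning ``on the history together with a set $W_i$ of attributes already committed.'' The committed set $W_i$ is not a function of the revealed sets $S_{v_1},\dots,S_{v_i}$ --- it is determined only by a global rainbow assignment whose existence is precisely the event being bounded --- so the chain-rule factorization over steps is not available for the events you define, and a union bound over all admissible choices of $W_i$ would be far too lossy. The repair is to drop the exclusion entirely: since $A_j\subseteq S_{v_j}$, an SDR for $\{S_{v_{i+1}}\cap A_j\}_j$ is a fortiori an SDR for $\{S_{v_{i+1}}\cap S_{v_j}\}_j$, so the weaker condition ``for every $i$ the family $\{S_{v_{i+1}}\cap S_{v_j}\}_{j\le i}$ has an SDR'' is still necessary for a rainbow clique, is measurable step by step, and yields the same per-step bound; this is exactly the event $A_T(i)$ used in the paper. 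With that change, and once the error accounting you defer to your last paragraph is actually carried out (the paper does this by choosing $x_n$ with $px_n+mp^3=o(1)$, so that the discrepancy between $(1-p)^M$ and $e^{-mp^2}$ is absorbed into $\delta_n$), your argument coincides with the paper's proof.
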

Here we choose $\{\delta_n\}$ such that ${\bar K}(1+\delta_n)$ were an integer.

\medskip

\begin{proof}
Let $\{x_n\}$ be a positive sequence such that
 \begin{equation*} \label{x-2013-08-14}
   p x_n \to 0, \quad  x_n\ll mp 
\quad
\mbox{ and } 
\quad
\sqrt{mp \ln n} \ll x_n
 \end{equation*}
(one can take, e.g., $x_n=\varphi_n\sqrt{mp\ln n}$, with $\varphi_n\uparrow +\infty$ 
satisfying
$\varphi^2_n {\bar K}p\to 0$).

Given $n$, we truncate the random sets $S_v$, 
prescribed to vertices $v\in V$ of the graph $G=G(n,m,p)$, 
to the size
 $M = \lfloor mp + x_n\rfloor$. Denote 
\begin{equation*}
  \bar{S}(v)=
  \begin{cases}
    S_v, \text{ if } |S_v| \le M,\\
    \text{$M$ element random subset of $S_v$, otherwise}.
  \end{cases}
\end{equation*}
We remark that for 
the event $B=\{S_v={\bar S}_v, \forall v\in V\}$ Chernoff's bound  implies
\begin{equation}\label{2013_08_16.B}
\pr(B)=1-o(1). 
\end{equation}

Now, let  $t \in [K; 2 K]$ and  let $T = \{u_1, \dots, u_t\}$ be a subset of $V$
of size $t$. 
By $R_T$ we denote the subgraph of $R_n$ induced by the vertex set $T$.
Given $i \in \{1, \dots, t\}$, let $T_i\subseteq \{u_1,\dots u_{i-1}\}$ denote the subset of 
vertices which are not adjacent to $v_i$ in $R_n$. Let $A_T(i)$ denote the event that
   sets
$\{\bar{S}_{u} \cap S_{u_i}, \, u\in T_i\}$
have distinct representatives (in particular, none of the sets is empty).
Furthermore, let $A_T$ denote the event that all $A_T(i)$, $1\le i \le t$ hold simultaneously
\[
A_T = \bigcap_{i=1}^t A_T(i).
\]
We shall prove  below that 
whenever $n$ is large enough
\begin{equation}\label{eq.AT}
  \pr(A_T) \le \left( 1- (1-p)^M \right)^{\binom t 2 - e(R_T)}.
\end{equation}
Next, proceeding as in  Lemma~\ref{lem.planted} we find positive sequences 
$\{\delta'_n\}$, $\{\epsilon'_n\}$ converging to zero such that the number $X'_n$ of 
subsets $T\subseteq V$ of size 
\begin{displaymath}
 a'= \Big\lfloor \frac{2\ln n}{(1-p)^M}(1+\delta'_n)\Big\rfloor
\end{displaymath}
that satisfy the event $A_T$ has expected value $\E X'_n\le \epsilon'_n$. 
For this purpose, we 
apply (\ref{2013-07-27}) to $a'$ and $p'=1-(1-p)^M$, and use (\ref{eq.AT}). We remark that 
$a'={\bar K}(1+\delta''_n)$, where $\{\delta''_n\}$ converges 
to zero  and $\delta''{\bar K}\to+\infty$. Indeed, we have 
$\delta_n'\ln n/(1-p)^M\to+\infty$, by Lemma~\ref{lem.planted}, and we have 
$(1-p)^M=e^{-mp^2 - O(px + mp^3)}$ 
with
$px+mp^3=o(1)$. In particular, for large $n$, we have 
$a'\in [{\bar K}, 2{\bar K}]$.   

The key observation of the proof is that events $B$ and $Rainbow(G, R_n, a' )$ 
imply  
$X'_n > 0$. Hence,
\begin{displaymath}
 \pr(Rainbow(G, R_n, a' )\cap B)\le \pr(X'_n>0)
\le 
\E X'_n
\le 
\epsilon'_n.
\end{displaymath}
In the last step we used Markov's inequality.
Finally, invoking (\ref{2013_08_16.B}) we obtain 
(\ref{2013_08_16.lem.plantedrig}).

\vskip 0.2cm

It remains to show (\ref{eq.AT}). We write
\[
\pr(A_T) = \prod_{i=1}^t \pr \left(A_T(i) | A_T(1), \dots, A_T(i-1)\right)
\]
and evaluate, for $1\le i\le t$,
\begin{equation}\label{2013_08_17.AT}
\pr(A_T(i) | A_T(1), \dots, A_T(i-1)) \le \left( 1 - (1-p)^M \right)^{|T_i|}.  
\end{equation}
Now (\ref{eq.AT}) follows from the simple  identity 
$\sum_{1\le i\le t}|T_i|={\binom t 2}-e(R_T)$.
Let us prove (\ref{2013_08_17.AT}). For this purpose we apply Lemma~\ref{lem.DR}.
We first
condition on $\{{\bar S}_{u}$, $u\in T_i\}$ and the size
$|S_{v_i}|$ of $S_{v_i}$. By 
Lemma~\ref{lem.DR} the conditional probability 
\[
\pr(A_T(i)\,  \bigl| \, \bar{S}_{u},\ u\in T_i, \, |S_{v_i}|)
\]
is maximized when the sets ${\bar S}_{u}$, $u\in T_i$ are mutually disjoint
(at this step we check the condition of  
Lemma~\ref{lem.DR} that
$\sum_{u\in T_i}|{\bar S}_{u}|\le tM<m$, for large $n$). Secondly, we drop
the conditioning on $|S_{v_i}|$ and allow $S_{v_i}$ to choose its element independently at 
random with probability $p$. In this way we obtain (\ref{2013_08_17.AT}).
\end{proof}

\begin{lemma}\label{lem.G1general}
Let $\{G(n)\}$ be a sequence of random binomial intersection graphs, 
where $m=m(n)\to+\infty$
and $p=p(n)\to 0$ as $n\to+\infty$.
 Assume that 
\begin{displaymath}
np = O(1),
\qquad
m (n p)^3 \ll  {\bar K}^2,
\end{displaymath}
where ${\bar K} = 2 e^{mp^2}\ln n$. 
Assume, in addition, that (\ref{2013_08_17.mpK}) holds.  

  Then there is a sequence $\{\delta_n\}$ converging to zero such that 
$\delta_n{\bar K}\to+\infty$ and
  \[
   \pr \left(\omega(G(n)) > {\bar K}(1 + \delta_n) \right) \to 0.
   \]
\end{lemma}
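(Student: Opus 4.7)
The plan is to reduce the general clique bound to the rainbow-clique bound of Lemma~\ref{lem.plantedrig} applied with $R_n=\emptyset$. Define the random ``bad'' graph $\hat R_n$ on $V(G(n))$ whose edges are those pairs $uv$ covered by some attribute of multiplicity $|T_w|\ge 3$; equivalently, $\hat R_n$ is the union of the monochromatic cliques on those $T_w$ with $|T_w|\ge 3$. The key structural observation is that any edge $uv\in E(G(n))\setminus E(\hat R_n)$ is covered only by pair-specific attributes (those $w$ with $T_w=\{u,v\}$), and such attributes are unique to a single edge. Hence any $\hat R_n$-independent clique in $G(n)$ is automatically a rainbow clique, and therefore witnesses the event $Rainbow(G(n),\emptyset,\cdot)$.

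Next I will bound $\E e(\hat R_n)$. Since $|T_w|\sim Binom(n,p)$, Bernoulli's inequality gives $\E e(\hat R_n)\le m\binom{n}{2}p^2(1-(1-p)^{n-2})\le m(np)^3/2$, which is $o(\bar K^2)$ by the standing assumption $m(np)^3\ll\bar K^2$. I will also need the sharper estimate $m(np)^3\ll\bar K$: taking logarithms of $m(np)^3\ll \bar K^2$ and using $np=O(1)$ together with $mp\gg\ln n$ forces $mp^2\gg\ln m$, and then $\bar K\ge e^{mp^2}\gg m\ge m(np)^3$. Markov's inequality will then give $e(\hat R_n)\le r_n$ whp whenever $r_n/\E e(\hat R_n)\to\infty$.

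Finally, suppose $\omega(G(n))>\bar K(1+\delta_n)$ and fix a clique $T$ of that size in $G(n)$. On the likely event $e(\hat R_n)\le r_n$, the subgraph $\hat R_n[T]$ has at most $r_n$ edges, so a greedy vertex-cover argument (delete one endpoint of every bad edge) produces a subset $T'\subseteq T$ with $|T'|\ge|T|-r_n$ that is $\hat R_n$-independent; by the structural observation $T'$ is a rainbow clique in $G(n)$. Let $\delta_n^{*},\eps_n^{*}$ denote the sequences provided by Lemma~\ref{lem.plantedrig} applied with $R_n\equiv\emptyset$. I will choose $r_n$ satisfying $\E e(\hat R_n)\ll r_n\ll\delta_n\bar K$ and $\delta_n$ with $\delta_n\ge \delta_n^{*}+r_n/\bar K$, $\delta_n\to 0$, and $\delta_n\bar K\to\infty$; all of this is simultaneously possible precisely because $m(np)^3/\bar K\to 0$. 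Then $|T'|\ge \bar K(1+\delta_n^{*})$, so $Rainbow(G(n),\emptyset,\lceil\bar K(1+\delta_n^{*})\rceil)$ holds, which by Lemma~\ref{lem.plantedrig} has probability $\le \eps_n^{*}=o(1)$; together with $\pr(e(\hat R_n)>r_n)=o(1)$ this will complete the proof. The main technical hurdle is the derivation of the sharper bound $m(np)^3\ll\bar K$ from the hypotheses, since without it the window $\E e(\hat R_n)\ll r_n\ll\delta_n\bar K$ collapses and the entire parameter tuning above breaks down.
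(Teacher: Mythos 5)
Your reduction to Lemma~\ref{lem.plantedrig} with $R_n=\emptyset$ hinges on the ``sharper estimate'' $m(np)^3\ll\bar K$, and this is where the proof breaks: that estimate does not follow from the hypotheses, and is in fact false for admissible parameters. Your derivation claims $np=O(1)$ and $mp\gg\ln n$ force $mp^2\gg\ln m$, but the opposite holds: the hypothesis $\bar K\le n/2$ gives $mp^2\le\ln n-\ln(4\ln n)$, while $np=O(1)$ together with $mp^2\to\infty$ forces $m\gg n^2$, so $mp^2<\ln n<\tfrac12\ln m$ and hence $e^{mp^2}\ll m$, not $\gg m$. Concretely, take $p=n^{-9/4}$ and $m=\tfrac12 n^{9/2}\ln n$: then $mp^2=\tfrac12\ln n$, all of (\ref{2013_08_17.mpK}) holds, $\bar K=2\sqrt n\ln n$, and $m(np)^3=\tfrac12 n^{3/4}\ln n$, so $m(np)^3\ll\bar K^2$ as required by the lemma, yet $m(np)^3/\bar K=n^{1/4}/4\to\infty$. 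In that regime your window $\E e(\hat R_n)\ll r_n\ll\delta_n\bar K$ is empty, and the deletion step (removing one endpoint per bad edge of $\hat R_n[T]$) may delete more vertices than the clique contains, so nothing survives to witness $Rainbow(G(n),\emptyset,\cdot)$. A similar failure occurs for the actual parameters used when this lemma is invoked in the proof of Lemma~\ref{lem.G1} (e.g.\ $\alpha$ near $1$, $\beta=3/2$).

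The structural observation itself is sound (an edge outside $\hat R_n$ is covered only by attributes $w$ with $T_w$ equal to that edge, so a clique avoiding $\hat R_n$ is rainbow), and your $\hat R_n$ is exactly the paper's $G_B$ with the same expectation bound $O(m(np)^3)$. The point you are missing is why the paper works with the relaxed event $Rainbow(G',G_B,t)$, in which the edges of $R_n$ are exempted from the rainbow requirement rather than eliminated by vertex deletion: in the first-moment bound (\ref{eq.AT}) the planted edges only change the exponent from $\binom t2$ to $\binom t2-e(R_T)$, which is harmless whenever $r_n\ll\bar K^2$ --- a condition weaker by a full factor of $\bar K/\delta_n$ than the one your argument needs. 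The price of that relaxation is the conditional coupling constructing $G'$ from the small attributes (since conditioning on $G_B$ distorts the distribution of the remaining sets), which your approach was attempting to avoid. To repair your proof you would either have to strengthen the hypothesis to $m(np)^3\ll\delta_n\bar K$ (which would no longer suffice for Lemma~\ref{lem.G1}) or adopt the paper's exemption mechanism.
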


\begin{proof} Given $n$, let $U$ be a random subset of $V=V(G(n))$ 
with binomial number of elements $|U|\sim Bin(n,p)$ and such that, 
for any $k=0,1,\dots$,  conditionally,
given the event $|U| = k$, 
the subset $U$ is uniformly 
distributed 
over the class of subsets of $V$ of size $k$.
  Recall that $T_w\subseteq V$ denotes the set of vertices that have chosen an attribute 
  $w \in W$.  
We remark that  $T_w$, $w \in W$ are iid random subsets 
having the same probability distribution as $U$.

We call an attribute $w$ \emph{big} if $|T_w| \ge 3$, otherwise $w$ is \emph{small}. Let $W_B$ and $W_S$
denote the  sets of big and small attributes. Denote by $G_{B}$ 
(respectively, $G_S$)
the subgraph of $G=G(n)$ consisting
of edges  
covered by big (respectively, small) attributes.
We observe that, given $G_B$, the random sets $T_z$, $z\in W_S$, defining the edges of $G_S$  are (conditionally) independent.
We are going to replace them by bigger sets, denoted $T'_z$, by adding some more elements as follows. 
Given $T_z$, we first generate independent random variables 
${\mathbb I}_z$ and 
$|\Delta_z|$, where ${\mathbb I}_z$ has Bernoulli distribution with success probability $p'=\pr(|U|\le 2)$ and 
where $\pr(|\Delta_z|=k)=\pr(|U|=k)/(1-p')$, $k=3,4,\dots$. 
Secondly, for ${\mathbb I}_z=1$ we put $T'_z=T_z$. Otherwise we put $T'_z=T_z\cup \Delta_z$, where
$\Delta_z$ is a  subset of $V\setminus T_z$ of size $|\Delta_z|-|T_z|\ge 1$ drawn uniformly at random. 
We note that given $G_B$, the random sets $T'_z$, $z\in W_S$ are (conditionally) independent and have the same probability distribution as $U$.
Next we generate independent random subsets $T'_w$ of $V$, for $w\in W_B$, so that they have the same distribution as $U$ and were independent 
of $G_S$, $G_B$ and $T'_z$, $z\in W_S$. Given $G_B$, the collection of random
sets $\{T'_w, w\in W_B\cup W_S\}$ defines the binomial random intersection graph $G'$ having the same distribution as $G(n,m,p)$.

We remark that $G_S\subseteq G'$ and every edge of $G_S$ can be assigned a unique small 
attribute that covers this edge 
and the assigned attributes are all different. 
On the other hand, the graph $G_B$ is relatively small.
Indeed,
since each  $w$ covers $\tbinom{|T_w|}{2}$ edges, 
   the expected number of edges  of $G_{B}$ is at most
\begin{eqnarray}\nonumber
 \E \sum_{w\in W}\binom{T_w}{2}{\mathbb I}_{\{|T_w|\ge 3\}}
=
m\E\binom{T_w}{2}{\mathbb I}_{\{|T_w|\ge 3\}}
\le 
m\sum_{k\ge 3} \binom k 2 \binom n k p^k.
\end{eqnarray}
Invoking the simple bound
   \[
  \sum_{k\ge 3} \binom k 2 \binom n k p^k \le  (np)^2 (e^{np} - 1)/2  = O((np)^3)
  \]
we obtain $\E e(G_B)=O(m(np)^3)$.

Now we choose an integer sequence $\{r_n\}$ such that  
 $m (n p)^3 \ll r_n \ll {\bar K}^2$ and write, for an integer $K'>0$,
 \begin{equation}\label{2013_08_19}
 \pr\left(\omega(G)\ge K'\right)
 \le 
 \E \pr\left(\omega(G)\ge K'|G_B\right){\mathbb I}_{\{e(G_B)\le r_n\}}+
\pr\left(e(G_B)\ge r_n\right).
 \end{equation}
 Here, by Markov's inequality,  $\pr(e(G_B)\ge r_n)\le r_n^{-1}\E e(G_B)=o(1)$. 
Furthermore, we 
 observe that $\omega(G)\ge K'$ 
implies the event $Rainbow(G',G_B, K')$. Hence,
\begin{displaymath}
 \pr\left(\omega(G)\ge K'|G_B\right)
\le
\pr\left(Rainbow(G',G_B, K')|G_B\right).
\end{displaymath}
We choose $K'={\bar K}(1+\delta_n)$
 and apply Lemma~\ref{lem.plantedrig} 
 to the conditional  probability on the right. 
At this point we specify $\{\delta_n\}$ and find $\epsilon_n\downarrow 0$
 such that $\pr\left(Rainbow(G',G_B, K')|G_B\right)\le \epsilon_n$ 
uniformly in $G_B$ satisfying 
$e(G_B)\le r_n$.
 Hence, (\ref{2013_08_19}) implies 
$\pr\left(\omega(G)\ge {\bar K}(1+\delta_n)\right)\le \epsilon_n+o(1)=o(1)$.
\end{proof}

\medskip

Now we are ready to prove Lemma~\ref{lem.G1}.

\medskip

\begin{proofof}{Lemma~\ref{lem.G1}}
Let
\begin{equation}\label{2013_08_21.eps}
0<\epsilon<2^{-1}\min\{1,\, 1-2^{-1}\alpha,\, \beta-2+\alpha-6\alpha\epsilon_1\}
\end{equation}
and let ${\bar G}_1$ be the subgraph of $G_1$ induced by vertices $v\in V_1$ with
$X_v\le \theta$. Here
$\theta^2=(1-\varepsilon-2^{-1}\alpha)m\ln n$. Let 
$D=|V(G_1)\setminus V({\bar G}_1)|$ denote the number of vertices of $G_1$ that
do not belong to ${\bar G}_1$. 

To prove the lemma 
we write 
$\omega(G_1)\le D+\omega({\bar G}_1)$ and show that each summand on the right is of order
 $o_P(K)$ for appropriately chosen $\epsilon=\epsilon(n)\to 0$.

Using (\ref{eq.pldef2}) and Lemma~\ref{lem.sv} we estimate the expected value of $D$ 
for $n\to+\infty$ 
\begin{equation}\label{2013_08_21+}
\E D=n\left(\pr(X_v\ge \theta)-\pr(X_v\ge \theta_2)\right)
\le (h(\epsilon)+o(1))K.
\end{equation}
Here 
$h(\epsilon):=(1-\epsilon-2^{-1}\alpha)^{-\alpha/2} - (1-2^{-1}\alpha)^{-\alpha/2}\to 0$
as $\varepsilon\to 0$.
Letting $\epsilon\to 0$ we obtain from (\ref{2013_08_21+}) that $D=o_P(K)$. 

We complete the proof  by showing that for any $\varepsilon$ satisfying 
(\ref{2013_08_21.eps})
\begin{equation}\label{2013_08_22.++}
 \pr\left(\omega({\bar G}_1) \ge 4 n ^ {1 - 2^{-1}\eps-2^{-1}\alpha} \ln n \right)=o(1).
\end{equation}
 Note that 
$n ^ {1 - 2^{-1}\eps-2^{-1}\alpha} \ln n \ll K$.

Let ${\bar N}$ be a binomial  random variable,  ${\bar N}\sim Bin(n,\pr(X_v>\theta_1))$,
and
let 
\begin{displaymath}
{\bar n}=(1+\epsilon)n^{1- 2^{-1}\alpha + \alpha \eps_1}L(n^{0.5-\epsilon_1})
\quad
\
{\text{ and}}
\quad
\
{\bar p}^2=(1-2^{-1}\epsilon-2^{-1}\alpha)m^{-1}\ln n. 
\end{displaymath}
We couple ${\bar G}_1$ with 
the binomial random intersection graph $G'=G({\bar n}, m,{\bar p})$ so that
the event that ${\bar G}_1$ is isomorphic to a subgraph of $G'$, denoted 
${\bar G}_1\subseteq G'$, has probability 
\begin{equation}\label{2013_08_22.couple}
\pr({\bar G}_1\subseteq G')=1-o(1). 
\end{equation}
We argue that  such a coupling is possible because the events
$A=\{$every vertex of $G'$ is prescribed at least $\theta$ attributes$\}$
and $B=\{|V({\bar G}_1)|\le {\bar n}\}$ have very high probabilities. Indeed,
the bound $\pr(A)=1-o(1)$ follows from 
Chernoff's inequality (\ref{eq.barB}). To get the bound $\pr(B)=1-o(1)$ we first couple
binomial random variables $|V({\bar G}_1)|\sim Bin(n,\pr(\theta_1<X_v<\theta))$ 
and ${\bar N}$
so that $\pr(|V({\bar G}_1)|\le {\bar N})=1$ and then invoke the bound
$\pr({\bar N}\le {\bar n})=1-o(1)$, which follows from 
Chernoff's inequality.

Next we apply Lemma~\ref{lem.G1general} to $G'$ and obtain the bound
\begin{equation}
   \pr \left( \omega(G') > 4 n ^ {1 - 2^{-1}\eps-2^{-1}\alpha} \ln {\bar n }\right)=o(1),
\end{equation}
which together with (\ref{2013_08_22.couple}) implies (\ref{2013_08_22.++}).
\end{proofof}

\section{Finite variance}
\label{sec.finite.variance}

In this section we prove Theorem~\ref{thm.finite}.
We note that the random power-law graph studied by Janson, \L uczak and Norros \cite{jln10} 
whp does not contain $\ck_4$ as a subgraph if the degree distribution has a 
finite second moment.
In our case a similar result holds for the rainbow $\ck_4$. 
Given a sequence of 
random intersection 
graphs
$\{G(n)\}$, we show that the number of rainbow
$\ck_4$ subgraphs of $G(n)$ 
is stochastically bounded as $n\to+\infty$ provided that the sequence of the 
second moments of the
degree distributions is bounded. If, in addition, the sequence of degree distributions 
is uniformly 
square integrable, then $G(n)$ has no rainbow $\ck_4$ whp, see Lemma~\ref{lem.noRK4} below. We use these observations in the 
proof of Theorem~\ref{thm.finite}.

\subsection{Large cliques and rainbow $\ck_4$}
\label{subsec.finite.structure}

Let $U$ be a finite set and let $\cc=\{C_1,\dots, C_r\}$ be a collection of 
(not necessarily distinct) subsets of $U$. We consider the complete graph $\ck_U$ 
on the vertex set 
$U$ and interpret subsets $C_i$ as colours: an edge $x\sim y$ receives colour $C_i$ 
(or just $i$) whenever $\{x,y\}\subseteq C_i$. We call $\cc$ a {\it clique cover} if
every edge of the clique $\ck_U$ receives at least one colour. 
The edges spanned by the vertex set
$C_i$ form a subclique, which we call the {\it monochromatic clique} of colour $i$.
We say that a vertex set $S\subseteq U$ is a {\it witness of a rainbow clique}
if every edge of the clique $\ck_S$ induced by $S$ receives a non-empty collection 
of colours
and it is possible to assign each edge one of its colours so 
 that all edges of $\ck_S$ were assigned different colours.
For example, the collection $\cc=\{A,B,C\}$, where $A=\{1,2,3\}$, $B=\{1,3, 4\}$ and 
$C=\{2,4,3\}$ is a clique cover of the set $\{1,2,3,4\}$. It produces three monochromatic triangles
and four rainbow triangles. 

We start with a result that relates clique covers to rainbow clique subgraphs. For a clique cover
$\cc = \{C_1, \dots, C_r\}$ denote by $p(\cc) = \max_{i\ne j} |C_i \cap C_j|$ the size of maximum
pairwise intersection.
\begin{lemma}\label{lem.bound4}
Let $k$ and $p$  be positive integers. Let $h=h(k)>0$ denote
 the smallest integer such that $\binom{h}{4}\ge k$. 
 Let $\cc = \{C_1, \dots, C_r\}$ be a clique cover of a finite set $U$
 and assume that $\max_{C \in \cc} |C| \ge |U| - h$ and  $p(\cc) \le p$.
   
     If, in addition, $|U|\ge t(k,p)$, where 
$t(k,p)=c\frac {h^3}{\ln h} p\left (\sqrt{ 2 k} + 5 + 2p \right)$, 
then $\cc$ produces  at least $k$ witnesses of rainbow $\ck_4$.
     Here $c$ is the  absolute constant of Lemma~\ref{lem.rainbow}.
\end{lemma}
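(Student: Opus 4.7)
My plan is to apply Lemma~\ref{lem.rainbow} to locate a single rainbow copy of $\ck_h$ with vertex set $S\subseteq U$, and then observe that every $4$-element subset $S'\subseteq S$ inherits an assignment of $6$ distinct colors to its edges and is therefore itself a rainbow $\ck_4$ witness. Distinct $4$-subsets of $S$ give distinct witnesses, so this produces $\binom{h}{4}\ge k$ rainbow $\ck_4$ witnesses, as required.

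To produce the rainbow $\ck_h$ I would fix a color class $C_*\in\cc$ with $|C_*|\ge|U|-h$ and write $\bar C_*=U\setminus C_*$, so $|\bar C_*|\le h$. The hypothesis $p(\cc)\le p$ immediately gives $|C_i|\le|C_i\cap C_*|+|\bar C_*|\le p+h$ for every other color class $C_i\ne C_*$, so no matter how edges are assigned each non-$*$ color can appear on at most $p+h-1$ edges at any single vertex (and only $p-1$ times inside $C_*$ itself). I would then equip the complete graph $\ck_{U'}$ on some subset $U'\subseteq U$ with an edge coloring drawn from $\cc$ by assigning to each pair $\{x,y\}$ a non-$*$ color covering it whenever one exists, and $C_*$ otherwise. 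The goal is to choose $U'$ so that $|U'|>c t h^3/\ln h$ with $t=p(\sqrt{2k}+5+2p)$ and so that the resulting coloring is $t$-good; Lemma~\ref{lem.rainbow} applied to $\ck_{U'}$ then produces the desired rainbow $\ck_h$.

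The main obstacle is controlling the number of edges at a vertex receiving color $C_*$. By construction the $C_*$-degree of $x\in C_*\cap U'$ equals the number of $y\in C_*\cap U'$ for which no non-$*$ color contains both $x$ and $y$, and this quantity is not directly bounded by any of the hypotheses. I expect to dispose of it by removing from $U$ a small exceptional set of vertices with very large $C_*$-degree, using a double-counting argument that combines the clique-cover property, the bound $p(\cc)\le p$ (which restricts each non-$*$ color to a subset of $C_*$ of size at most $p$, hence to at most $\binom p 2$ pairs inside $C_*$), and the quantitative slack provided by $|U|\ge t(k,p)$. The factor of $p$ in $t(k,p)$ appears to be precisely what gives enough room for this trimming while still leaving $|U'|$ above the threshold of Lemma~\ref{lem.rainbow}; once the trimming and the $t$-goodness are established, the proof concludes as outlined above.
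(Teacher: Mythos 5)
There is a genuine gap, and it begins with the hypothesis on which you build everything. The inequality $\max_{C\in\cc}|C|\ge |U|-h$ as printed is evidently a typo: under that reading the lemma is false (take $\cc=\{U\}$, a single colour class; every edge receives only that one colour, so there are no witnesses of rainbow $\ck_4$ at all, yet the hypotheses hold for $|U|$ arbitrarily large). The paper's proof in fact uses the opposite inequality, $b:=\max_i|C_i|\le |U|-h$, precisely so that $U\setminus B$ contains $h$ vertices to work with, and in the application to Theorem~\ref{thm.finite} it is the \emph{failure} of this hypothesis that yields the bound $\omega(G)\le\omega'(G)+h$. Your entire construction --- a near-spanning class $C_*$ forcing every other class to have size at most $p+h$ --- therefore rests on the wrong reading, and the example above shows it cannot be rescued: nothing in the hypotheses bounds, even on average, the number of pairs covered only by $C_*$, so the ``small exceptional set of vertices of large $C_*$-degree'' that you hope to trim away need not be small (it can be all of $U$). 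This is exactly the step you flag as the main obstacle and leave unproved, and it is not provable.

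The paper's argument runs in the opposite direction. Suppose there are fewer than $k$ witnesses; then there is no rainbow $\ck_h$ (your observation that a rainbow $\ck_h$ yields $\binom{h}{4}\ge k$ witnesses is used here, in contrapositive form). Colouring each edge by an arbitrary covering class gives a $(b-1)$-good colouring, since each monochromatic clique has at most $b$ vertices; Lemma~\ref{lem.rainbow} then forces $|U|\le c(b-1)h^3/\ln h$, i.e.\ the largest class $B$ satisfies $b>|U|\ln h/(c h^3)$. The witnesses are then built by hand around $B$: pick $x_1,x_2\in U\setminus B$ whose edge receives at most $5$ colours, extract from a minimal cover of the star $\{\{x_1,y\}:y\in B\}$ a subset $B'\subseteq B$ of size at least $b/p-5-p$ whose edges to $x_1$ carry distinct colours $C_{x_1,y}$, and verify that $\{x_1,x_2,y_1,y_2\}$ is a witness for every $y_1\in B'$ and $y_2\in B'\setminus C_{x_2,y_1}$ (the edge $y_1y_2$ using colour $B$). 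This produces at least $\tfrac12(b/p-5-2p)^2<k$ witnesses, whence $|U|<t(k,p)$, a contradiction. The idea you are missing is this dichotomy: Lemma~\ref{lem.rainbow} is not used to exhibit a rainbow $\ck_h$ directly, but to show that in its absence some colour class must have size $\Omega(|U|\ln h/h^3)$, and that large class is then the engine that manufactures the $k$ witnesses of rainbow $\ck_4$.
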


\begin{proof} \,
Write $b = \max_i |C_i|$. 
We note that $\cc$ 
has no rainbow $\ck_h$ since otherwise there would be at least $\binom{h}{4}\ge k$ copies
of rainbow $\ck_4$. Observe, that every monochromatic subclique of $\ck_U$ has at most 
$b$
vertices. Hence, each colour appears at most $b-1$ times at each vertex of $\ck_U$. By 
Lemma~\ref{lem.rainbow}, $\ck_U$ has at most $c(b-1)h^3/\ln h$ vertices. That is, 
$b>a|U|$, where $a = \frac {\ln h} {c h^3}$ and 
$c$ is an absolute constant. Fix $B\in \cc$ with $|B|=b$ and a subset $S\subseteq U\setminus B$ of size $h$, say $S=\{x_1,\dots, x_h\}$. 
Here we use the assumption $|U|\ge b+h$ telling that $U\setminus B$ has at least $h$ elements, $|U\setminus B|=|U|-b\ge h$.
We remark, that at least one pair of vertices of $S$, say $\{x_1, x_2\}$, receives at most $5$ colours (it 
is covered by at most $5$ sets from $\cc$). Indeed, otherwise every edge of $\ck_S$ received at least $6$ distinct colours and, thus, each $S'\subseteq S$ of size $|S'|=4$ induced
a rainbow $\ck_4$. This contradicts to our assumption that there are fewer than  $k\le \binom{h}{4}$ rainbow copies of $\ck_4$.

 We observe that the set of colours received by the pair $\{x_1, x_2\}$ is non-empty (since $\cc$ is a clique cover) 
 and fix one such colour, say
 $C_{x_1,x_2}\in \cc$. Now, consider the set of pairs $\{\{x_1,y\},\, y\in B\}$ and pick a smallest family of sets from $\cc$ 
 such that each pair were covered by a member of the family 
 (the smallest family means that any other family with fewer members would leave at least one uncovered pair). Since each member of the family intersects with $B$ in at most $p$ vertices (condition of the lemma) we conclude that such a family contains at least $\lceil b/p\rceil$ members. Furthermore, since the family is minimal, every member covers a pair $\{x_1,y\}$ which is not covered by other members. Hence, we can 
 pick a set $B_1\subseteq B$ 
 of size $\lceil b/p\rceil$ so that every $\{x_1,y\}$, $y\in B_1$ is covered by a unique member, say $C_{x_1,y}$, of the family.

    Next, remove from $B_1$ the elements $y$ such that $x_2 \in C_{x_1,y}$ (there are at most $5$ of them).
    Then remove those elements $y$ which belong to the set $C_{x_1, x_2}$ (there are at most $p$ of them, since 
    $|C_{x_1,x_2}\cap B|\le p$).
    Call the newly formed set $B'$. Notice that
    \[
    b':= |B'| \ge \frac {b} p - 5 - p > \frac {a |U|} p - 5 - p. 
    \]
    Let us consider the clique ${\tilde  K}$ on the vertex set $B' \cup \{x_1, x_2\}$.
    For $y \in B'$, colour each edge $\{x_1, y\}$ of ${\tilde K}$ with the colour $C_{x_1,y}$. Colour the edge $\{x_1,x_2\}$
    with $C_{x_1, x_2}$ and for every edge $\{y_i, y_j\} \in B'$ use the colour $B$. 
Finally, for $y \in B'$, assign $\{x_2, y\}$ an arbitrary colour from the set of colours received by $\{x_2, y\}$  from the clique cover $\cc$. 
 
    We claim that for any  $y_1 \in B'$ and any $y_2 \in B' \setminus C_{x_2, y_1}$,
    the set $\{x_1, x_2, y_1, y_2\}$ witnesses a rainbow $\ck_4$. Indeed, by the construction,
    the colour $C_{x_1, x_2}$ of the edge $\{x_1,x_2\}$ occurs only once, because $B' \cap C_{x_1, x_2} = \emptyset$.
    Similarly, for $x_1, x_2 \not \in B$, the colour $B$ of $\{y_1,y_2\}$ occurs only once. 
    The colours of the two other edges incident to $x_1$ occur only once, since we removed all candidates $y$ such that $x_2  \in C_{i_{x_1,y}}$,
    while constructing the set $B'$.
    Finally, we have $C_{x_2, y_1} \ne C_{x_2, y_2}$ since we chose $y_2$
    outside $C_{x_2, y_1}$.

    How many such witnesses can we form? For any $y_1$ we choose 
$|B'| - |B' \cap C_{x_2,y_1}| \ge |B'| - p$ suitable $y_2$.
    Repeating this for each $y_1$ we will produce every $4$-set 
    at most
    twice. Therefore  ${\tilde K}$ contains at least
    \begin{equation}\label{2013_08_27}
    \frac {b'(b' - p)} 2 \ge \frac 1 2 \left( \frac {a |U|} p - 5 - 2 p \right)^2
    \end{equation}
    witnesses of rainbow $\ck_4$. But since the total number of witnesses of rainbow $\ck_4$ produced by $\cc$ is less that $k$, the right-hand side of (\ref{2013_08_27}) is less than $k$. We obtain the inequality
    \[
    |U| < \frac p a \left (\sqrt{ 2 k} + 5 + 2p \right)=t(k,p),
    \]
    which contradicts to the condition $|U|\ge t(k,p)$.
\end{proof}

\medskip
In the remaining part of the subsection \ref{subsec.finite.structure} 
we interpret  attributes $w\in W$ 
as colours assigned to edges of
a random intersection graph.
\begin{lemma}\label{lem.PRK4}
    Let $G = G(k ,m, P)$ be a random intersection graph and let
    $X_1, \dots, X_k$ denote the sizes of random sets defining  $G$. For any integers
    $x_1, \dots, x_k$ such that the event 
    $B=\{X_1 = x_1, \dots, X_k = x_k\}$ has positive probability, we have
    \[
    \pr(G \mbox { has a rainbow } \ck_k | B) \le m^{-\frac{k(k-1)} 2} (x_1 x_2 \dots x_k)^{k-1}.
    \]
\end{lemma}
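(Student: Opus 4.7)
The plan is to apply a union bound over all possible assignments of distinct attributes (colours) to the $\binom{k}{2}$ edges of $\ck_k$, and to use the product structure coming from the independence of the random attribute sets $S_1,\dots,S_k$.

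More concretely, since $G(k,m,P)$ is a complete graph on $k$ vertices, a rainbow $\ck_k$ occurs if and only if there is an injection $\varphi$ from the edge set of $\ck_k$ into $W=[m]$ such that $\varphi(\{u,v\}) \in S_u \cap S_v$ for every edge $\{u,v\}$. The number of such injections is at most $(m)_{\binom k 2}\le m^{\binom k 2}$. I would then fix one such injection $\varphi$ and estimate the conditional probability, given $B$, that $\varphi(e) \in S_u \cap S_v$ for every edge $e=\{u,v\}$. Since $S_1,\dots,S_k$ are independent and each $S_v$ is a uniformly random subset of $[m]$ of size $x_v$, this probability factorises as $\prod_{v=1}^{k} \pr(F_v \subseteq S_v \mid |S_v|=x_v)$, where $F_v \subseteq W$ is the set of $k-1$ distinct attributes $\varphi$ assigns to the edges incident to $v$.

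For each $v$, the probability that a fixed set $F_v$ of $k-1$ attributes is contained in the random $x_v$-subset $S_v$ is
\[
\frac{\binom{m-k+1}{x_v-k+1}}{\binom{m}{x_v}} = \prod_{i=0}^{k-2} \frac{x_v - i}{m - i} \le \left(\frac{x_v}{m}\right)^{k-1},
\]
using the elementary inequality $(x_v-i)/(m-i) \le x_v/m$ valid for $x_v\le m$ and $0\le i\le k-2$. Combining these bounds with the union bound over the $m^{\binom k 2}$ possible injections $\varphi$ yields
\[
\pr(G \text{ has a rainbow } \ck_k \mid B) \le m^{\binom k 2} \prod_{v=1}^{k}\left(\frac{x_v}{m}\right)^{k-1} = m^{-\binom k 2}(x_1 x_2 \cdots x_k)^{k-1},
\]
which is the desired inequality.

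I do not expect any substantial obstacle here: the argument is a direct first-moment computation, and the only slightly delicate point is justifying the per-vertex factor, which is handled by the elementary hypergeometric estimate above. The independence of the $S_v$'s makes the assignments across different vertices decouple cleanly once $\varphi$ is fixed, so no subtle inclusion-exclusion or correlation adjustment is required.
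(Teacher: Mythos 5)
Your argument is correct and is essentially the same as the paper's: a union bound over the at most $(m)_{\binom k 2}\le m^{\binom k2}$ injections from edges to attributes, followed by the factorisation over the independent sets $S_v$ with per-vertex factor $(x_v)_{k-1}/(m)_{k-1}\le (x_v/m)^{k-1}$. The only cosmetic difference is that you make the hypergeometric computation explicit; no gap.
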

\begin{proof}\,
Our intersection graph produces a rainbow clique on its $k$ vertices whenever for some injective mapping, say $f$,  from the set of pairs of vertices to the set of attributes, the event $A_f=\{$every pair $\{x,y\}$ is covered by $f(\{x,y\})\}$ occurs. 
    By the independence,  $\pr(A_f|B)=\prod_i\frac{(x_i)_{k-1}}{(m)_{k-1}}$. Since there are $(m)_{\binom k 2}$ possibilities to choose the map $f$,
we obtain, by the union bound,
  \begin{displaymath}
  \pr(G \mbox { has a rainbow } \ck_k | B)
  \le 
  (m)_{\binom k 2}  \prod_i\frac{(x_i)_{k-1}}{(m)_{k-1}}
  \le 
  \frac {(x_1 x_2 \dots x_k)^{k-1}} {m^{k(k-1)/2}}. 
\end{displaymath}
\end{proof}

\begin{lemma}\label{lem.noRK4}
    Let $\{G(n)\}$ be a sequence of
    random intersection graphs such that $\E Y(n) ^2  = O(1)$. 
    Then the number $R = R(n)$  of 4-sets $S \subseteq V(G(n))$ that 
witness a rainbow $\ck_4$ in $G(n)$ satisfies as $n\to+\infty$
    \[
     \E R \le \frac { (\E Y^2)^4} {4!} = O(1).
    \]
    Furthermore, if for some positive sequence $\eps_n \to 0$ 
we have
 $n \pr( Y(n) \ge \eps_n n^{1/2}) \to 0$
    then $G(n)$ does not contain a rainbow $\ck_4$ whp.

\end{lemma}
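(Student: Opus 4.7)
My plan is to apply Lemma~\ref{lem.PRK4} with $k=4$ to each 4-set and sum via linearity of expectation. For a fixed 4-set $\{v_1, v_2, v_3, v_4\}$, conditioning on the set sizes $X_i = |S_{v_i}|$ and invoking Lemma~\ref{lem.PRK4} gives $\pr(\text{rainbow on } \{v_1, \ldots, v_4\} \mid X_1, \ldots, X_4) \le m^{-6}(X_1 X_2 X_3 X_4)^3$. Since the $X_i$'s are i.i.d., taking expectations yields $\pr(\text{rainbow on a fixed 4-set}) \le m^{-6}(\E X^3)^4$, and hence $\E R \le \binom{n}{4} m^{-6}(\E X^3)^4$ by linearity.

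For Part 1, I convert to moments of $Y$ via $\E X^k = (m/n)^{k/2}\, \E Y^k$. The key bound $\E X^3 \le \sqrt m \cdot \E X^2$ (valid on the event $X \le \sqrt m$) gives, after simplification,
\[
\binom{n}{4} m^{-6} (\sqrt m\, \E X^2)^4 = \binom{n}{4}\, n^{-4}\, (\E Y^2)^4 \le (\E Y^2)^4/4!.
\]
Vertices with $X_v > \sqrt m$ (equivalently $Y_v > \sqrt n$) have frequency $O(1/n)$ by the Markov bound $\pr(Y > \sqrt n) \le \E Y^2 / n$, so the contribution to $\E R$ from 4-sets containing such a vertex is of lower order.

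For Part 2, I use an explicit truncation. Let $A_n = \{\max_v Y_v \le \eps_n n^{1/2}\}$; the additional hypothesis together with the union bound give $\pr(A_n^c) \le n\, \pr(Y \ge \eps_n n^{1/2}) \to 0$. On $A_n$, every $X_v \le \eps_n \sqrt m$, so the sharpened bound $(X)_3 \le X^2 \cdot \eps_n \sqrt m$ valid on $A_n$ replaces the above computation by
\[
\E[R\, \I_{A_n}] \le \binom{n}{4} m^{-6}(\eps_n\sqrt m\, \E X^2)^4 = \eps_n^4 (\E Y^2)^4/4! \to 0.
\]
Combining, $\pr(R \ge 1) \le \pr(A_n^c) + \E[R\, \I_{A_n}] \to 0$, showing that $G(n)$ contains no rainbow $\ck_4$ whp.

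The main technical obstacle lies in Part 1 under only a finite second moment of $Y$: Lemma~\ref{lem.PRK4} naturally produces $(\E X^3)^4$, which can be infinite when only $\E Y^2 < \infty$ is assumed. The resolution is to split $R$ according to whether all four $X_{v_i} \le \sqrt m$: the truncated contribution is bounded as above by $(\E Y^2)^4/4!$, while the complementary contribution is absorbed using the Markov estimate controlling the frequency of large-$X_v$ vertices.
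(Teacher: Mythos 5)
Your second part is fine and essentially identical to the paper's argument: truncate at $\eps_n\sqrt m$, control the exceptional event by the union bound, and bound the truncated expectation by $(\eps_n\E Y^2)^4\to 0$. The gap is in the first part, in your treatment of the $4$-sets containing a vertex with $X_v>\sqrt m$. You assert that because such vertices have frequency $O(1/n)$ the corresponding contribution to $\E R$ is ``of lower order'', but you supply no bound on the witness probability for these $4$-sets. The expected number of $4$-sets containing at least one such vertex is of order $\binom n3\cdot n\pr(Y>\sqrt n)=O(n^3)$, so bounding the witness probability by $1$ gives $O(n^3)$, not a lower-order term, while bounding it by Lemma~\ref{lem.PRK4} reintroduces the third moments you are trying to avoid (and if you then truncate the other three variables you still pick up a factor $m^{3/2}\E X^3\I_{X>\sqrt m}$ that is not controlled by $\E Y^2=O(1)$). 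Worse, the assertion itself can fail: if $\pr(Y=\sqrt n)=c/n$, then $\E Y^2=\Theta(1)$, there are $\Theta(1)$ vertices with $X_v=\sqrt m$ in expectation, four of them witness a rainbow $\ck_4$ with probability bounded away from zero, and the contribution of such $4$-sets to $\E R$ is $\Theta(1)$ --- the same order as your main term, not smaller.

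The paper closes this uniformly with a single pointwise inequality: since the conditional witness probability is at most $1$ as well as at most $m^{-6}(X_1X_2X_3X_4)^3$, one has
\[
\min\bigl(m^{-6}(X_1X_2X_3X_4)^3,\,1\bigr)\;\le\; m^{-4}(X_1X_2X_3X_4)^2,
\]
which is $a^6\wedge 1\le a^4$ with $a=(X_1X_2X_3X_4)^{1/2}/m$ and holds for all values of the $X_i$, large or small. Taking expectations and using independence gives $\E R\le\binom n4(\E X^2)^4/m^4=(\E Y^2)^4/4!$ with no truncation and no case analysis. Your truncation at $\sqrt m$ is exactly the ``all $X_i$ small'' instance of this inequality; to finish your route you need the $\wedge\,1$ device (or an equivalent one) on the complementary event, at which point the split becomes unnecessary.
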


\begin{proofof}{Lemma~\ref{lem.noRK4}}
    Denote $X_v =  |S_v(n)|$ and  $Y = Y(n)$.
We write, using symmetry and the bound of Lemma~\ref{lem.PRK4},
    \begin{align*}
        &\E R 
=\sum_{S\subseteq V, |S|=4}\pr(S \, {\text{witnesses a rainbow}} \, \ck_4)
\le 
\binom n 4 \E \left(\frac{ (X_1 X_2 X_3 X_4)^3} {m^6} \wedge 1 \right).
    \end{align*}
Next, we apply the simple  inequality $a^6\wedge 1\le a^4$ and bound 
the right-hand side from above by
$\frac {n^4} {4!} \frac {\E (X_1 X_2 X_3 X_4)^2} {m^4} = \frac { (\E Y^2) ^4} {4!}$.

    For the second part of the lemma, 
let $b = b(n) =  \eps_n \sqrt{m}$ and let $A = A(n)$ be the event that 
    $\max_{v \in V} X_v \le b$. Let ${\bar A}$ denote the complement event.
We write
    \begin{equation}
        \pr(R \ge 1) 
\le
\pr(R\ge 1, A)+\pr({\bar A}) 
        \le \E R \I_A + \pr({\bar A}). 
    \end{equation}
    By the union bound the second term is at most
    \[
    n \pr(X > b)  = n \pr(Y > \eps_n n^{1/2}) \to 0.
    \]
    The first term by Lemma~\ref{lem.PRK4} satisfies
    \begin{displaymath}
\E R \I_A 
\le 
\binom n 4 m^{-6} \E (X_1 X_2 X_3 X_4)^3 \I_A 
\le 
\frac {(\E X^2) ^ 4 n ^ 4 b ^ 4} {4! m^6}
=(\eps_n \E Y^2)^4
=o(1).
    \end{displaymath}
\end{proofof}

\medskip

The next result shows that the structure of 
random intersection graphs with $\E Y(n)^2 = O(1)$ is relatively simple.
\begin{lemma}\label{lem.intersection}
Let $\{G(n)\}$ be a sequence of 
    random intersection graphs. Assume that $\E Y(n)^2 = O(1)$ and
    $m(n) \to \infty$  as $n\to+\infty$. Then whp each pair $\{w', w''\}$ of attributes 
    is shared by at most two vertices of $G(n)$.
\end{lemma}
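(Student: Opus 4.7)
The plan is to apply a standard first moment (union bound) argument. Fix a pair of attributes $\{w',w''\}\subseteq W$ and a vertex $v\in V$. Conditionally on $X_v=|S_v|$, the set $S_v$ is uniform among $X_v$-subsets of $W$, so
\[
\pr(w',w''\in S_v)=\E\frac{X_v(X_v-1)}{m(m-1)}\le \frac{\E X^2}{m(m-1)}.
\]
Using $X=\sqrt{m/n}\,Y$ we obtain $\E X^2=(m/n)\E Y^2 = O(m/n)$ by assumption, hence
\[
\pr(w',w''\in S_v)\le \frac{C}{n(m-1)}
\]
for some constant $C$ and all $n$ large enough.

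Next I would bound the expected number of triples $(\{w',w''\},\{v_1,v_2,v_3\})$ with $\{w',w''\}\subseteq S_{v_1}\cap S_{v_2}\cap S_{v_3}$. By the independence of $S_{v_1},S_{v_2},S_{v_3}$ and the union bound,
\[
\E N \le \binom{m}{2}\binom{n}{3}\left(\frac{C}{n(m-1)}\right)^3 \le \frac{C'n^3 m^2}{n^3 m^3}=\frac{C'}{m},
\]
which tends to zero since $m(n)\to\infty$. By Markov's inequality, $\pr(N\ge 1)\to 0$, and as the existence of a pair shared by three vertices implies $N\ge 1$, the lemma follows.

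I do not expect any genuine obstacle: the calculation is a direct second-moment-of-$X$ bound converted via $Y=\sqrt{n/m}\,X$ to the hypothesis $\E Y^2=O(1)$. The only point that needs care is to use $\E X(X-1)\le \E X^2$ rather than $(\E X)^2$, since the former is the quantity controlled by $\E Y^2$; and to recognise that the gain which drives the bound to zero comes exactly from the extra factor $1/m$ left over after $\binom{m}{2}\binom{n}{3}$ is compared against $m^3(m-1)^3$ in the denominator.
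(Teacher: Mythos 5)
Your proposal is correct and follows essentially the same first-moment argument as the paper: both compute $\pr(w',w''\in S_v)=\E X(X-1)/(m(m-1))=O(1/(nm))$ and then apply a union bound over pairs of attributes and triples of vertices (the paper states the bound for general $k\ge 3$, but this is an immaterial difference). No gaps.
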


The lemma says that the intersection of any two monochromatic cliques of $G(n)$ 
consists of at most one edge whp.
\medskip

\begin{proof}
For any pair of attributes $w', w''$ and a vertex $v$ of $G(n)$, we have
    \begin{align*}
        \pr(w', w'' \in S_v) &= \sum_{k=0}^m \pr(|S_v| = k) \frac{k(k-1)} {m(m-1)} = \frac {\E X^2 -\E X} {m(m-1)} 
        \\&\le  \frac {\E Y^2 } {n(m-1)} \le \frac c {n m}.
    \end{align*}
Here  $c >0$ does not depend on $m$ and $n$.
   By the union bound, the probability that there is 
a pair of attributes
shared by  $k$ or more vertices 
is at most
    \begin{align*}
        \binom m 2 \binom n k \pr(w',w'' \in S_v)^k &\le m^2 \left( \frac {e n} k \right)^k \left( \frac c {n m} \right)^k
        \le m^2\left( \frac {e c} {k m} \right)^k.
    \end{align*}
This probability tends to zero for any $k\ge 3$.
\end{proof}

\vskip 0.5 cm
\begin{proofof}{Theorem~\ref{thm.finite}} \,
    Let $R = R(n)$ denote the number of 4-sets $S \subseteq V(G(n))$ 
witnessing rainbow $\ck_4$ in $G(n)$.
    By Lemma~\ref{lem.intersection}, the intersection of any two monochromatic cliques 
has at most 2 vertices whp. 
    In that case, by Lemma~\ref{lem.bound4} 
(applied to the set of vertices $U$ of the largest clique)
 either $\omega(G(n)) < t(R+1, 2)$ or 
 $\omega(G) \le \omega'(G) + h(R+1)$. 
%
    Thus, 
    \[
      \omega(G(n)) \le  \omega'(G(n)) + Z(n)
    \]
    where $Z(n) = t(R+1, 2) + h(R+1)=O_P(1)$, by Lemma \ref{lem.noRK4}.
%
%
%

    If $n \pr(Y(n) > \eps_n n^{1/2}) \to 0$ for some $\eps_n \to 0$ then by Lemma~\ref{lem.noRK4}
    $G(n)$ whp does not contain a rainbow $\ck_4$, so whp $\omega(G) \le t(1,2) \vee (\omega'(G) + 3)$.
\end{proofof}

\subsection{Monochromatic cliques and balls and bins}
\label{subsec.finite.bb}

Here we prove Theorem~\ref{thm.omega.mono}. In the proof we use the fact that
the maximum bin load 
 $M(N, m)$ is  a ``smooth'' function of the first argument $N$, see lemma below.

\begin{lemma}\label{lem.epsbb}
    Let $\{N_n\}$ and $\{m_n\}$ be sequences of positive integers such that 
$N = N_n \to \infty$ and $m = m_n \to \infty$.
Let $\{\delta_n\}, \{\eps_n\}$ be positive sequences converging to zero
such that $\eps_n = o(\delta_n)$.
For every $n$ there is a coupling between random variables
$M' = M'_n = M( \left \lfloor N(1 + \eps_n) \right \rfloor, m)$ and $M = M_n = M(N,m)$
 such that $M \le M'$ with probability one,
and 
\begin{equation}\label{eq.bbcoupling1}
       \pr(M' - \delta_n \E M' \le M) \to 1.
   \end{equation}
   If, additionally, $\pr (M' > \delta_n^{-1}) \to 0$, then  $M = M'$ whp. 
\end{lemma}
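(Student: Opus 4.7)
I would begin by constructing the natural monotone coupling: draw a sequence of $N' := \lfloor N(1+\varepsilon_n)\rfloor$ independent uniform balls into the $m$ bins, let $M'$ be the maximum load after all $N'$ throws, and let $M$ be the maximum load after the first $N$ of them. Then $M \le M'$ with probability one, which gives the monotonicity part of the statement.

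For the first conclusion, let $k := N' - N \le N\varepsilon_n + 1$ and let $Y_i$ be the number of the last $k$ balls falling into bin $i$, so that $(Y_1, \dots, Y_m)$ is $\mathrm{Mult}(k; 1/m, \dots, 1/m)$. Since bin $i$'s load grows from $X_i$ to $X_i + Y_i$, we have $M' \le M + \max_i Y_i$. A Chernoff bound gives $\pr(Y_i \ge t) \le \binom{k}{t} m^{-t} \le (ek/(tm))^t$ and a union bound over the $m$ bins yields $\max_i Y_i = O(k/m + \ln m / \ln(2 + m/k))$ whp. On the other hand, standard asymptotics for $M(N',m)$ (Section~6 of \cite{kolchin}) give the lower bound $\E M' \gtrsim \max(N/m, \ln m / \ln\ln m)$ in all relevant regimes. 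Since $k \le N\varepsilon_n + 1$ and $\varepsilon_n = o(\delta_n)$, in each regime the bound on $\max_i Y_i$ is $o(\delta_n \E M')$, which yields $\pr(M' - M \le \delta_n \E M') \to 1$.

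For the second conclusion, the extra hypothesis $\pr(M' > \delta_n^{-1}) \to 0$ together with the well-known exponential tail bounds on $M(N',m)$ forces $\E M' \le \delta_n^{-1}(1 + o(1))$, so that $\delta_n \E M' = 1 + o(1)$. A slight sharpening of the Chernoff step (enlarging the threshold $t$ by any $\omega(1)$ factor) shows that in this low-load regime one actually has $\max_i Y_i = 0$ whp, because the expected number of bins receiving two or more of the last $k$ balls is $O(k^2/m) = o(1)$ while the expected number of extra balls landing in a bin of load $M$ is $O(k \cdot \delta_n^{-1} / m) = o(1)$ under the given hypotheses. Since $M' - M$ is a non-negative integer, it follows that $M = M'$ whp.

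The main obstacle is matching the Chernoff tail bound for $\max_i Y_i$ against the correct order of $\E M'$ uniformly across the different regimes of $N/m$ (the regime $N \gg m$, where $\E M' \sim N/m$, and the regime $N = O(m \,\mathrm{polylog}\, m)$, where $\E M' \sim \ln m / \ln\ln m$). In each regime the slack provided by $\varepsilon_n = o(\delta_n)$ is just enough to carry the estimate through; for the second part, one must additionally exploit the tail hypothesis to prevent pathological configurations where extra balls pile up on an already-maximal bin.
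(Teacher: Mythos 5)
Your coupling (throw $N'=\lfloor N(1+\eps_n)\rfloor$ balls sequentially, let $M$ be the maximum after the first $N$) is the same as the paper's, and it correctly gives $M\le M'$. The estimate, however, has a genuine gap: bounding $M'-M$ by $\max_i Y_i$ and then taking a union bound over all $m$ bins is too lossy, and the claim that ``in each regime the slack provided by $\eps_n=o(\delta_n)$ is just enough'' is false. Concretely, take $N=m$, $\eps_n=(\ln\ln m)^{-1}$, $\delta_n=(\ln\ln m)^{-1/2}$ (this is exactly the regime $N=\Theta(m)$ in which the lemma is applied in the proof of Theorem~1.3). Then $k\approx \eps_n m$ extra balls thrown into $m$ bins have $\max_i Y_i=\Theta(\ln m/\ln\ln m)$ whp --- the same order as $\E M'$ itself --- while $\delta_n\E M'=\Theta(\ln m/(\ln\ln m)^{3/2})$ is strictly smaller. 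The union bound over $m$ bins costs a factor of $\ln m$ in the exponent that the hypothesis $\eps_n=o(\delta_n)$ cannot absorb; your two-regime discussion of $\E M'$ does not repair this.

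The fix is to track only the single bin realising the maximum rather than all bins. By exchangeability of the balls, conditionally on the final configuration (hence on $M'$ and on the index $L$ of a maximising bin), the set of $\lfloor\eps_n N\rfloor$ balls that are ``extra'' is a uniform random subset of the $N'$ balls, so the number $\Delta$ of extra balls in bin $L$ is hypergeometric with $\E(\Delta\mid M')\le\eps_n M'$. Since $M\ge M'-\Delta$, Markov's inequality gives $\pr(M'-M\ge\delta_n\E M')\le\pr(\Delta\ge\delta_n\E M')\le\eps_n/\delta_n\to0$; note this needs no information about the order of $\E M'$ and works even when $\delta_n\E M'<1$. The same one-bin bound also repairs your second part, where the quantity $O(k\delta_n^{-1}/m)$ is not justified (the number of bins attaining load $M$ need not be $O(\delta_n^{-1})$, and can be polynomially large in $m$): one simply writes $\pr(M'-M\ge1)\le\E\bigl(\Delta\,\I_{M'\le\delta_n^{-1}}\bigr)+\pr(M'>\delta_n^{-1})\le\eps_n\delta_n^{-1}+o(1)\to0$. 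This is the paper's argument.
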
 
\begin{proof} \, Given $n$, we label $m$ bins by numbers $1,\dots, m$.
  Throw $\left \lfloor N (1 + \eps_n) \right \rfloor$ balls into
  bins. This gives an instance of $M'$. 
Denote by $L$ the label of the bin with the lowest index
  realising the maximum. 

  Now delete  uniformly at random  $\left \lfloor \eps_n N \right \rfloor$ balls. 
The configuration
  with the remaining $N$ balls gives an instance of $M\le M'$. 
We remark that conditionally, given $M'$, the number $\Delta$ 
of balls deleted from the bin $L$
has a hypergeometric distribution with the  mean value
\begin{displaymath}
\frac
{M'\times \left \lfloor \eps_n N \right \rfloor}
{\left \lfloor N (1 + \eps_n) \right \rfloor}\le \eps_n M'. 
\end{displaymath}
Now the bin $L$ contains  $M'-\Delta\le M$ balls and, by Markov's inequality,
\begin{displaymath}
 \pr(M'-M\ge t)\le \pr(\Delta\ge t)\le t^{-1}\E\Delta\le t^{-1}\eps_n\E M'.
\end{displaymath}
Choosing $t=\delta_n\E M'$ yields (\ref{eq.bbcoupling1}). Similarly, if
$\pr (M' \ge \delta_n^{-1}) = o(1)$, then 
\[
\pr(M' - M \ge 1) \le \E \Delta \ii_{M' \le \delta_n^{-1}} + \pr (M' > \delta_n^{-1}) \le \eps_n \delta_n^{-1} + o(1) \to 0. 
\]
\end{proof}

\medskip

\begin{proofof}{Remark~\ref{rmk.omega.mono}}
  Suppose $m = o(n)$, $\E Y = \Theta(1)$ and $\E Y^2 = O(1)$.
  Since $X=X(n)$ is a non-negative integer,
  we have $\E X^2 \ge \E X$. But $\E X^2 = O(m/n)$ and $\E X = \Theta( (m/n)^{1/2})$, so
  $\E X^2 = o(\E X)$, a contradiction.
\end{proofof}

\bigskip

\begin{proofof}{Theorem~\ref{thm.omega.mono}} 
 In view of Remark~\ref{rmk.omega.mono} it suffices to consider the case
$m = \Omega(n)$. Denote $\eps_n=(2+\ln^2n)^{-1}$ so that $\eps_n\ln n=o(1)$ and 
$n\eps_n^2 \to+\infty$. Given $n$, write $\eps=\eps_n$ and   denote 
${\bar N} =n\E X_1= \sqrt{mn}\, \E Y$ and 
\begin{displaymath}
{\bar N}^{-}=\lfloor {\bar N}(1-4\eps)\rfloor,
\qquad
{\bar N}^{+}=\lceil {\bar N}(1+4\eps)\rceil.
\end{displaymath}
%
In order to generate an instance of $G(n)$ 
we  draw a random sample $X_1,\dots, X_n$
from the distribution $P(n)$. Then choose random subsets $S_{v_i}\subseteq W$ of size
$X_i$, $v_i\in V$, by  throwing  balls
into $m$ bins labelled 
$w_1,\dots, w_m$ (the $j$-th bin has label $w_j$ and index $j$) as follows. 
Keep throwing balls labelled $i=1$ until there are exactly $X_i$ different bins
containing a ball labelled $i$. 
Do the same for $i = 2, \dots, n$. Now, for each $i$, the bins 
containing  balls  labelled $i$ 
make up the set $S_{v_i}$. In this way we obtain an instance of $G(n)$.
Let $X_i'$ denote the number of balls of label $i$ thrown so far. 
Clearly, $X_1', \dots, X_n'$ is a sequence 
of independent random variables and $X_i' \ge X_i$, for each $i$.
We stop throwing balls if the number of balls 
$N'=\sum_{i}X'_i$ at least as large as ${\bar N}^{+}$. Otherwise we throw additional 
${\bar N}^{+}-N'$ unlabelled balls into bins. 

Let us inspect the bins after $j$ balls have been thrown. Let
 ${\mathcal M}(j)$ denote the set of balls contained in the bin with the largest
number of balls and the smallest index. We note that the number  $M(j) = |\mathcal{M}(j)|$
of balls in that bin has the same distribution as $M(j,m)$ (random variable defined before
Theorem~\ref{thm.omega.mono}). 

Denote, for short, $\omega'=\omega'(G(n))$ and 
${\bar M}=M( \lfloor{\bar N} \rfloor)$.
We observe that the event 
${\cal A}_1=\{$all balls of $\mathcal{M}(N')$ have different labels$\}$ implies 
$\omega'(G(n))=M(N')$.
Furthermore, if both  events ${\cal A}_2=\{M({\bar N}^{-})=M({\bar N}^{+})\}$ and 
${\cal A}_3=\{{\bar N}^{-}\le N'\le {\bar N}^{+}\}$ hold, then ${\bar M}=M(N')$.
We shall show below that 
\begin{equation}\label{2013_08_30}
 \pr({\cal A}_r)=1-o(1),
\qquad
{\text{for}}
\qquad
r=1,2,3.
\end{equation}
Now, (\ref{2013_08_30}) implies $\pr(\omega'={\bar M})=1-o(1)$ and, since the distributions of $M(\lfloor {\bar N} \rfloor, m)$ and ${\bar M}$ coincide, we obtain
\begin{displaymath}
 d_{TV}\bigl(\omega', M(\lfloor{\bar N}\rfloor, m)\bigr)
=
d_{TV}\bigl(\omega', {\bar M}\bigr)
\le 
\pr\bigl(\omega'\not= {\bar M}\bigr)
=o(1).
\end{displaymath}
It remains to prove (\ref{2013_08_30}). Let us  consider $\pr ({\cal A}_3)$.
We first replace $X_i$ and $X_i'$ by the truncated random variables
\begin{displaymath}
{\tilde X}_i=X_i{\mathbb I}_{\{X_i\le \eps \, m\}}
\qquad
{\text{and}}
\qquad
{\tilde X}_i'=X_i'{\mathbb I}_{\{X_i\le \eps \, m\}},
\quad 
1\le i\le n.
\end{displaymath}
Denote ${\tilde N}'=\sum_{i}{\tilde X}_i'$ and introduce events
${\tilde {\cal A}}_3=\{{\bar N}^{-}\le {\tilde N}'\le {\bar N}^{+}\}$ and
${\cal A}_4=\{\max_{1\le i\le n} X_i\le \eps \,m\}$.   Let 
${\bar {\cal A}}_4$ denote the complement of ${\cal A}_4$.     From the relation
${\cal A}_3\cap {\cal A}_4={\tilde {\cal A}}_3\cap {\cal A}_4$ 
we obtain
\begin{displaymath}
 \pr({\cal A}_3)
\ge 
\pr({\cal A}_3\cap {\cal A}_4)
=
\pr({\tilde {\cal A}}_3\cap {\cal A}_4)
\ge 
\pr({\tilde {\cal A}}_3)-\pr({\bar {\cal A}}_4).
\end{displaymath}
Furthermore, by the union bound and Markov's inequality,   we have 
\begin{displaymath}
 \pr({\bar {\cal A}}_4)
\le 
n\pr(X_1>\eps \, m)\le n\frac{\E X_1^2}{\eps^2m^2}=
\frac{\E Y^2}{\eps^2m}=o(1),
\end{displaymath}
since $m=\Omega(n)$ and $\eps^2n\to+\infty$. 
Hence, $\pr({\cal A}_3)\ge \pr({\tilde {\cal A}}_3)-o(1)$. Secondly,  we prove that
$\pr({\tilde {\cal A}}_3)=1-o(1)$. For this purpose we show that, for large $n$,
\begin{equation}\label{2013_08_30+10}
	\bar{N}(1-\eps)\le \E {\tilde N}'\le \bar{N}(1+2\eps)
\quad
{\text{and}}
\quad
 \pr\bigl(|{\tilde N}'-\E {\tilde N}'|\ge \eps \,\E {\tilde N}'\bigr)=o(1).
\end{equation}

The proof of (\ref{2013_08_30+10}) is routine. 
Notice that conditionally, given ${\tilde X}_i = k$, we have
${\tilde X}_i' = \sum_{j = 1}^k \xi_j$, where $\xi_1, \xi_2, \dots, \xi_k$
are independent geometric random variables with parameters 
\[
\frac m m, \ \frac {m-1} m,\ \dots, \ \frac {m-k+1} m,
\]
respectively. Since ${\tilde X}_i\le \eps \, m$, we only consider $k < \eps m$, so
\[
  \E ({\tilde X}_i' | {\tilde X}_i =k) 
= 
\frac m m + \frac m {m-1} + \dots + \frac m {m-k+1} \le \frac k {1-\eps}\le k(1+2\eps).
\]
In the last step we used $\eps\le 1/2$. We conclude that
\begin{equation}\label{eq.condexp'}
 {\tilde X}_i 
\le 
\E ({\tilde X}_i'|{\tilde X}_i) 
\le
{\tilde X}_i (1 + 2 \eps).
\end{equation}
From (\ref{eq.condexp'}) we obtain
\begin{equation}\label{2013_08_31}
 n\E{\tilde X}_1\le \E{\tilde N}'\le (1+2\eps)n\E{\tilde X}_1.
\end{equation}
Furthermore, invoking in (\ref{2013_08_31}) 
the inequalities $\E X_1-s\le \E{\tilde X}_1\le \E X_1$, where  
\begin{displaymath}
 s
=
\E X_1{\mathbb I}_{\{X_1>\eps\, m\}}
\le
(\eps\, m)^{-1}\E X_1^2 
=
(\eps\, n)^{-1}\E Y_1^2
=o(\eps),
\end{displaymath}
we obtain the first part of (\ref{2013_08_30+10}). The second part of 
(\ref{2013_08_30+10}) follows from the inequalities ${\tilde N}'\ge N(1-\eps)$ and
\begin{equation}\label{2013_08_31+1}
 Var {\tilde N}'\le 2n\E X_1^2=2m\E Y^2, 
\end{equation}
by Chebyshev's inequality. Let us show  (\ref{2013_08_31+1}). Proceeding as in the proof
of (\ref{eq.condexp'}) we evaluate the conditional variance
\[
Var({\tilde X}_i' | {\tilde X}_i = k) 
= 
\sum_{j=1}^k Var(\xi_j) 
= 
\sum_{j=0}^{k-1} \frac {j m} {(m-j)^2} \le \frac {k^2} {2 (1-\eps)^2 m}
\le 
\frac{k^2}{m},
\]
and  obtain
\[
\E Var({\tilde X}_i' | {\tilde X}_i) 
\le \frac {\E {\tilde X}_i^2} m.
\]
Furthermore, using (\ref{eq.condexp'}) we write
\[
Var(\E ({\tilde X}_i' | {\tilde X}_i)) 
\le 
\E (\E({\tilde X}_i' | {\tilde X}_i))^2 
\le 
\E {\tilde X}_i^2 (1+2\eps)^2 \le \E {\tilde X}_i^2 (1 + 8 \eps).
\]
Collecting these estimates we obtain an upper bound for the variance
\[
Var({\tilde X}_i') 
= 
\E Var({\tilde X}_i' |{\tilde X}_i) 
+ 
Var(\E({\tilde X}_i' |{\tilde X}_i))  
\le 
\E {\tilde X}_i^2 (1 + 8 \eps + m^{-1}) 
\le 
2 \E X_i^2.
\]
This bound implies (\ref{2013_08_31+1}). We have shown (\ref{2013_08_30}) for $r=2$.

Let us prove (\ref{2013_08_30}) for $r=1$. We start with an auxiliary inequality.
Given integers $x_1,\dots, x_n\ge 0$ consider a collection of $k=x_1+\dots+x_n > 0$ 
labelled 
balls, containing $x_i$ balls of label $i$, $1\le i\le n$. 
The probability of the 
event 
that a random subset of  $r$ balls contains a pair of equally labelled balls is
\begin{equation}\label{2013_08_31+3}
 \pr(L\ge 1)\le \E L=\binom{r}{2} {\binom{k}{2}}^{-1}\sum_{i}\binom{x_i}{2}
\le \Bigl(\frac{r}{k}\Bigr)^2\sum_{i}x_i^2.
\end{equation}
Here $L$ counts pairs of equally labelled balls in the  random subset.

We will show that   $\pr({\bar{\cal A}}_1)=o(1)$.
To this aim, we introduce  events
\begin{displaymath}
 {\cal A}_5=\{M({\tilde N}')\le \ln n\},
\qquad
{\cal A}_6=\{\sum_{1\le i\le n}({\tilde X}'_i)^2 \le m \ln n\},
\end{displaymath}
estimate 
\begin{displaymath}
 \pr({\bar {\cal A}}_1)
\le 
\pr({\bar {\cal A}}_1\cap {\cal A}_3 \cap {\cal A}_4 \cap{\cal A}_5\cap{\cal A}_6)
+
\pr({\bar {\cal A}}_3)
+ \pr({\bar {\cal A}}_4)
+ \pr({\bar {\cal A}}_5)
+ \pr({\bar {\cal A}}_6),
\end{displaymath}
and show that each summand on the right is $o(1)$.
For the first summand we 
estimate  using (\ref{2013_08_31+3}) 
\begin{align*}
    &\pr({\bar {\cal A}}_1\cap {\cal A}_3 \cap \ca_4 \cap {\cal A}_5 \cap{\cal A}_6)
=
\E\pr({\bar {\cal A}}_1|X_1,\dots, X_n){\mathbb I}_{{\cal A}_3\cap {\cal A}_4 \cap {\cal A}_5 \cap {\cal A}_6}
\\
 &\le 
\E \left(\frac{M({\tilde N}')^2}{({\tilde N'})^2} \sum_i({\tilde X}'_i)^2 \ii_{\ca_3 \cap \ca_4 \cap \ca_5 \cap \ca_6} | X_1, \dots, X_n \right)
\\ 
&\le
\left(\frac{\ln n}{{\bar N}^{+}}\right)^2m\ln n=O\left(\frac{\ln^3n}{n}\right).
\end{align*}

It remains to show $\pr({\bar{\cal A}}_r)=o(1)$, for $r=5,6$. We write 
$\pr({\bar {\cal A}}_5)=\pr({\bar {\cal A}}_5\cap{\cal A}_3)+o(1)$ and estimate
\begin{equation} \label{eq.Mlnn}
\pr({\bar {\cal A}}_5\cap{\cal A}_3)
\le 
\pr(M({\bar N}^{+})> \ln n)
=
\pr(\max_{j\in[m]}Z_j>\ln n)
\le 
m\pr(Z_1> \ln n)=o(1).
\end{equation}
Here $Z_j$ denotes the number of balls in the $j$th bin after  ${\bar N}^{+}$
balls have been thrown. In the second inequality we applied the union bound and used the fact that $Z_1,\dots, Z_m$ 
are identically distributed. To get the very last bound  we write 
for binomially $Bin({\bar N}^{+}, m^{-1})$ distributed
$Z_1$
and  $t=\lfloor \ln n\rfloor$,  
\begin{displaymath}
 \pr(Z_1\ge t)\le \binom{{\bar N}^{+}}{t}m^{-t}\le \left(\frac{e{\bar N}^{+}}{tm}\right)^t
 = o\left(m^{-1}\right). 
\end{displaymath}

To estimate $\pr({\bar{\cal A}}_6)$ we apply Markov's inequality, 
\begin{displaymath}
 \pr({\bar{\cal A}}_6)\le (m \ln n)^{-1}n\E({\tilde X}_1')^2
=
\ln^{-1}n (Var({\tilde X}_1')+(\E{\tilde X}_1')^2)
=
O(\ln^{-1}n).
\end{displaymath}

Finally, we prove (\ref{2013_08_30}) for $r=2$.
Notice that the coupling between $M( {\bar N}^+)$ and $M({\bar N}^-)$ is 
equivalent to the coupling provided by Lemma~\ref{lem.epsbb}. 
Choose $\eps'$ solving $N^+=(1+\eps') N^- $ and note that
$\eps' \sim 8 \eps=O(\ln^{-2}n)$.
The bound $\pr({\cal A}_2)=1-o(1)$ follows by Lemma~\ref{lem.epsbb}  
and the bound $\pr(M({\bar N}^{+}) > \ln n)=o(1)$, 
shown above.
\end{proofof}

%

\section{Algorithms for finding the largest clique}
\label{sec.algo}

Random intersection graphs provide theoretical models for real 
networks, such as 
the affilation (actor, scientific collaboration) networks.
Although the model assumptions 
about the 
distribution of the family of random sets  defining the intersection graph are 
rather stringent (independence and a particular form of the distribution), these models 
yield random graphs with clustering properties similar to those found in real networks, 
\cite{bloznelis11}. While observing a real network we may or may not have
information about the sets of attributes prescribed to vertices. Therefore it is important
to have algorithms suited to random intersection graphs that do not use any data related
to  attribute sets prescribed to vertices. In this section we consider 
two such algorithms that find cliques of order $(1+o(1))\omega(G)$ in a 
random intersection graph $G$.

The \textsc{Greedy-Clique} algorithm of \cite{jln10}
finds a clique of the optimal order $(1-o_P(1))\omega(G)$ in
a random intersection graph, 
in the case where (asymptotic) degree distribution is a power-law
with exponent $\alpha \in (1; 2)$.

\medskip

\begin{center}
\fbox{\begin{minipage}{4.5in}

\textsc{Greedy-Clique}(G):

   \smallskip

   \qquad \textit{Let $v^{(1)}, \dots, v^{(n)}$ be $V(G)$ sorted by their degrees, descending}

\qquad $M \leftarrow \emptyset$

\qquad \textbf{for} $i = 1$ to $n$ 

\qquad \qquad \textbf{if} $v^{(i)}$ \textit{is adjacent to each vertex in} $M$ \textbf{then}

\qquad \qquad \qquad $M \leftarrow M \cup \{v^{(i)}\}$

\qquad \textbf{return} $M$

\end{minipage}}
\end{center}
Here 
we assume that 
graphs are represented by 
the adjacency list data structure. 
 The implicit computational model behind our running time estimates in this section is random-access machine (RAM). 
%
\begin{prop}\label{prop.gcalg}
    Assume that conditions of Theorem \ref{thm.main} hold. 
    Suppose  that $\E Y = \Theta(1)$ and that (\ref{eq.uniform}) holds for some $\eps > 0$.
    Then on 
input $G=G(n)$ \textsc{Greedy-Clique} outputs a clique of size 
$\omega(G(n)) (1 - o_P(1))$ in time $O(n^2)$.
\end{prop}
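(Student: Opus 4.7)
My plan is to show that sorting by degree essentially recovers the ``high-set-size'' vertex pool $V_2 = \{v : X_v > \theta_2\}$ used in the proof of Lemma~\ref{lem.G2}, so that the clique-construction argument there applies almost verbatim to the greedy-by-degree output.

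The first ingredient is Lemma~\ref{lem.degrees}: under the hypotheses of Theorem~\ref{thm.main} together with $\E Y = \Theta(1)$ and (\ref{eq.uniform}), this lemma transfers the tail condition (\ref{eq.pldef2}) from $Y(n)$ to the degree $D_v$, with the same exponent $\alpha$ and slowly varying function $L'(t) = d^{\alpha/2} L(t)$, where $d = \lim (\E Y)^2$. Using this, I would define a degree threshold $\tilde{\theta}$ analogous to $\theta_2$, set $\tilde{V}_2 = \{v : D_v \ge \tilde{\theta}\}$, and use Chernoff's bound (\ref{eq.chernoff}) to see that $|\tilde{V}_2|$ is binomially concentrated around $(1 + o_P(1))(1 - \alpha/2)^{-\alpha/2} K(n)$, the same asymptotic value as $|V_2|$.

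The main obstacle, which I would tackle second, is to show that the two sets largely coincide, namely $|V_2 \triangle \tilde{V}_2| = o_P(|V_2|)$. The intuition is that $D_v$ concentrates around its conditional mean $X_v \E Y \sqrt{n/m}$ given $X_v$, so the orderings by $X_v$ and by $D_v$ agree for most vertices near the threshold. I would prove this by conditioning on $X_v$, bounding $Var(D_v | X_v)$ using (\ref{eq.uniform}) to tame rare large-$X_u$ vertices, and applying Chebyshev's inequality separately to vertices in $V_2 \setminus \tilde{V}_2$ and in $\tilde{V}_2 \setminus V_2$.

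With $|V_2 \triangle \tilde{V}_2| = o_P(|V_2|)$ in hand and $U = V_2 \cap \tilde{V}_2$, each vertex of $\tilde{V}_2$ is processed by greedy before any vertex outside, so any rejected $v \in U$ must lose to some previously added $u \in \tilde{V}_2$ with $uv \notin E$. Such $u$ is either in $U$ (and the number of non-edges inside $U$ is at most $\binom{|U|}{2} p_1 = o_P(|U|)$ by the bound $p_1 \le e^{-\theta_2^2/m}$ established in Lemma~\ref{lem.G2}) or in $\tilde{V}_2 \setminus V_2$ (of which there are only $o_P(|V_2|)$ altogether, and a further calculation using $X_u < \theta_2$ bounds the number of $U$-vertices each such $u$ can block). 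This gives $|M| \ge |U| - o_P(|U|) = (1 - o_P(1)) |V_2|$, which matches the upper bound $\omega(G(n)) = (1 + o_P(1))|V_2|$ from Theorem~\ref{thm.main}. For the running time, sorting by degree costs $O(n \log n)$; in each iteration, marking the neighbours of $v^{(i)}$ in a length-$n$ bit-vector in time $O(d(v^{(i)}))$ and then testing each of the at most $|M|$ vertices in $O(1)$ yields an overall bound $O(|E(G)| + n|M|) = O(n^2)$ in the worst case.
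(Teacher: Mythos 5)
The paper does not actually prove this proposition: it only remarks that Proposition~\ref{prop.gcalg} ``is proved in a similar way as Lemma~\ref{lem.G2}'' and defers the argument to the extended version \cite{extended}, so there is nothing in-paper to compare against line by line. Your plan is precisely the adaptation the authors allude to: replace the (unobservable) set-size threshold $\theta_2$ by a degree threshold, use the fact that conditionally on $X_v$ the degree $D_v$ is $Bin(n-1,p(X_v))$ with $p(x)\sim x\E X/m$ (the first inequality of (\ref{eq.estP}) plus the truncation argument from the proof of Lemma~\ref{lem.degrees}, which is where (\ref{eq.uniform}) enters) to show that the degree ordering and the set-size ordering disagree only on $o_P(K)$ borderline vertices, and then rerun the non-intersecting-pairs computation of Lemma~\ref{lem.G2}. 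The structure is sound and the running-time analysis is fine.

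Two details deserve more care than your sketch gives them. First, when a vertex $u\in\tilde V_2\setminus V_2$ enters $M$ and potentially blocks later vertices of $U$, the bound you need is a \emph{lower} bound $X_u\ge(1-\delta_n)\theta_2$ (which holds whp for all $u\in\tilde V_2$ by degree concentration, for a suitable $\delta_n\to0$ with $\delta_n\ln n\to 0$), not the upper bound $X_u<\theta_2$ that you invoke: a small $X_u$ makes non-adjacency to $v$ \emph{more} likely, and it is the lower bound that keeps the pairwise non-adjacency probability at $e^{-(1-o(1))\theta_2^2/m}$ so that the expected number of blocked pairs remains $o(K)$. Moreover, membership in $\tilde V_2$ is a degree event and hence correlated with $u$'s adjacencies, so the clean way to run this count is unconditionally, over all pairs of vertices with $X\ge(1-\delta_n)\theta_2$, exactly as in Lemma~\ref{lem.G2} but with the slightly lowered threshold. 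Second, Lemma~\ref{lem.degrees} as stated requires $(\E Y(n))^2$ or $\E D_1(n)$ to converge, whereas the proposition only assumes $\E Y=\Theta(1)$; either pass to convergent subsequences (harmless for an $o_P$ statement) or bypass the lemma by working directly with the $n$-dependent degree threshold $(1-\delta_n)\E Y(n)\sqrt{n/m}\,\theta_2$, which is what the concentration argument naturally produces.
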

By Lemma~\ref{lem.degrees}, the above result remains true if the conditions 
(\ref{eq.pldef2}) and $\E Y(n)=\Theta(1)$ are replaced by the conditions 
(\ref{eq.degheavytail})  and $\E D_1=\Theta(1)$. 
Proposition~\ref{prop.gcalg} is proved in a similar way as Lemma~\ref{lem.G2}, but it does 
not follow from Lemma~\ref{lem.G2}, since
\textsc{Greedy-Clique} is not allowed to know the attribute subset sizes.
The proof of Proposition~\ref{prop.gcalg} is given 
in the extended version of the paper \cite{extended}.

For random intersection graphs with square integrable degree distribution
we suggest the following simple algorithm.


\smallskip


\begin{center}
\fbox{\begin{minipage}{4.5in}

    \textsc{Mono-Clique}(G):

\smallskip

\qquad \textbf{for} $uv \in E(G)$ 

\qquad \qquad $D(uv) \leftarrow |\Gamma(u) \cap \Gamma(v)|$


\qquad \textbf{for} $uv \in E(G)$ in the decreasing order of $D(uv)$ 

\qquad \qquad $S \leftarrow \Gamma(u) \cap \Gamma(v)$

\qquad \qquad \textbf{if} $S$ \textit{is a clique} \textbf{then}  

\qquad \qquad \qquad \textbf{return} $S \cup \{u,v\}$

\qquad \textbf{return} $\{1\} \cap V(G)$
\end{minipage}}
\end{center}
Here $\Gamma(v)$ denotes the set of neighbours of $v$.

\smallskip


\begin{theorem}\label{thm.algfinite} 
Assume that  $\{G(n)\}$ is a sequence of random intersection graphs such that
 $n=O(m)$ and
$\E Y^2(n)=O(1)$.
 Let $C = C(n)$ be the clique constructed by 
 \textsc{Mono-Clique} on input $G(n)$. Then $\E \left(\omega(G(n)) - |C|\right)^2 = O(1)$. Furthermore, if there is a sequence $\{\omega_n\}$, such that $\omega_n \to \infty$ and $\omega(G(n)) \ge \omega_n$ whp, then $|C| = \omega(G(n))$ whp.
\end{theorem}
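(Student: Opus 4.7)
The plan is to show that the \textsc{Mono-Clique} algorithm returns a clique equal to a maximum clique $U$ of $G=G(n)$, $|U|=\omega=\omega(G(n))$, with high probability. For any edge $uv\in U$, the set $U\setminus\{u,v\}$ is contained in $\Gamma(u)\cap\Gamma(v)$, so $D(uv)\ge \omega-2$. Call $x\in (\Gamma(u)\cap\Gamma(v))\setminus U$ an \emph{extra} for $uv$. The key observation is that if $uv\in U$ has no extras then $S=\Gamma(u)\cap\Gamma(v)=U\setminus\{u,v\}$ is a clique and $D(uv)=\omega-2$, so upon reaching this edge (or any earlier edge in the $D$-ordering, which then must also satisfy $D=\omega-2$ since $|C|\le\omega$ always) the algorithm returns a set of size exactly $\omega$. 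Conversely, if $x$ is an extra for $uv\in U$, then maximality of $U$ forces $x$ to miss some $y\in U\setminus\{u,v\}$, producing a non-edge $xy$ with both endpoints in $S$, so $S$ is not a clique and the algorithm skips $uv$. Hence $|C|<\omega$ implies that every edge of $U$ has at least one extra.

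Next I bound the expected number of extras. For fixed distinct $u,v,x$ with $u,v\in U$ and $x\notin U$, conditional independence of $S_u,S_v,S_x$ together with $\pr(S_a\cap S_b\ne\emptyset\mid |S_a|,|S_b|)\le |S_a||S_b|/m$ gives
\[
\pr(x\in\Gamma(u)\cap\Gamma(v)\mid |S_u|,|S_v|,|S_x|)\le \frac{|S_u||S_v||S_x|^2}{m^2}.
\]
Taking expectations and using $\E|S_v|^2=(m/n)\E Y^2$ and $(\E|S_v|)^2=(m/n)(\E Y)^2$ yields $\pr(x\in\Gamma(u)\cap\Gamma(v))=O(n^{-2})$, uniformly in $m/n$. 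Summing over the at most $\binom{\omega}{2}$ edges of $U$ and over the $n$ candidates $x$, I obtain $\E[\#\text{extras in }U]=O(\omega^2/n)$, hence by Markov $\pr(\text{any extra in }U)=O(\omega^2/n)$.

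For the first assertion, write $\E(\omega-|C|)^2=\E(\omega-|C|)^2\ii_{\mathcal{G}}+\E(\omega-|C|)^2\ii_{\mathcal{G}^c}$ with $\mathcal{G}=\{\text{no extras in }U\}$. On $\mathcal{G}$ we have $|C|=\omega$, so the first term vanishes; for the second, Cauchy-Schwarz gives $\E\omega^2\ii_{\mathcal{G}^c}\le (\E\omega^4)^{1/2}\pr(\mathcal{G}^c)^{1/2}$. Moment bounds on $\omega$ obtained from Theorem~\ref{thm.finite} and Theorem~\ref{thm.omega.mono} together with the classical tail estimates for the maximum bin load $M(N,m)$ give $\E\omega^4=O(\mathrm{polylog}\,n)$, while $\pr(\mathcal{G}^c)=O(\mathrm{polylog}\,n/n)$, so the product is $o(1)$ and in particular $O(1)$. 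The second assertion is immediate: under the additional hypothesis $\omega\ge\omega_n\to\infty$ whp, combined with $\omega=O(\log n)$ whp, we obtain $\pr(\mathcal{G}^c)\to 0$, and hence $|C|=\omega$ whp. The main technical difficulty is the conversion of $O_P(1)$-type statements into genuine moment bounds on $\omega$, needed to make the Cauchy-Schwarz step effective; this relies on transporting the well-understood tails of the maximum ball-and-bin load through Theorem~\ref{thm.omega.mono} and Theorem~\ref{thm.finite}.
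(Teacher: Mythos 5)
Your structural analysis of the algorithm is sound and matches the paper's: the algorithm fails to return a maximum clique $U$ exactly when, for every edge $uv$ of $U$, the set $\Gamma(u)\cap\Gamma(v)$ contains a vertex outside $U$ (which, by maximality, must miss some $y\in U$ and so spoil the clique test). The gap is in the probabilistic estimate. You claim $\E[\#\text{extras in }U]=O(\omega^2/n)$ by multiplying the fixed-vertex probability $\pr(x\in\Gamma(u)\cap\Gamma(v))=O(n^{-2})$ by $\binom{\omega}{2}\cdot n$. But $U$ is a random set, and the event ``$uv$ is an edge of the maximum clique'' is strongly positively correlated with ``$x$ is a common neighbour of $u$ and $v$'': vertices of $U$ typically share a popular attribute $w$, and conditionally on that, $u$ and $v$ already have $\approx\omega-2$ common neighbours. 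The only way to make your union bound rigorous without confronting this conditioning is to sum $\pr(x\in\Gamma(u)\cap\Gamma(v))$ over all $n^3$ ordered triples, which gives $\Theta(n)$ (the expected number of paths of length two in a sparse graph), not $o(1)$. Moreover, even your weaker target $\pr(\exists\,\text{extra})\to 0$ is not what the argument needs: the failure event is that \emph{every} edge of $U$ has an extra, and exploiting that multiplicity is essential. Separately, your route to the first assertion via Cauchy--Schwarz needs $\E\omega^4=O(\mathrm{polylog}\,n)$; Theorem~\ref{thm.omega.mono} requires $\E Y=\Theta(1)$, which is not among the hypotheses of Theorem~\ref{thm.algfinite}, and converting the $O_P(1)$ statement of Theorem~\ref{thm.finite} into a fourth-moment bound is precisely the step you flag as ``the main technical difficulty'' without resolving it.

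The paper's proof avoids both problems by counting a rarer structure with an \emph{unconditional} first-moment bound. Let $Z$ count ordered $4$-tuples $(v_1,v_2,v_3,v_4)$ of distinct vertices forming a cycle $v_1v_2,v_2v_3,v_3v_4,v_4v_1$ with the additional requirement $S_{v_2}\cap S_{v_4}=\emptyset$. This last condition kills the dominant contribution (four vertices sharing one attribute), and a case analysis over how the four edges are covered by attributes gives $\E Z=O(1)$ over \emph{all} $n^4$ tuples, with no reference to the maximum clique. Then, deterministically, if the algorithm fails, each of the $\binom{s}{2}$ pairs $\{x,y\}$ of the maximum clique yields such a $4$-tuple $(x,x',y,y')$ where $x',y'$ are non-adjacent common neighbours, so $\binom{s}{2}\le Z$ and hence $\omega(G(n))-|C|\le\sqrt{2Z}+1$ pointwise. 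This gives $\E(\omega-|C|)^2\le\E(\sqrt{2Z}+1)^2=O(1)$ with no moments of $\omega$ needed, and the whp statement follows from Markov's inequality applied to $\pr(Z\ge\binom{\omega_n}{2})$. The missing idea in your proposal is thus the replacement of the conditional ``extras'' count by an unconditional count of a configuration (a $4$-cycle with an empty opposite intersection) that is both rare in expectation and forced in quantity $\binom{\omega}{2}$ by algorithmic failure.
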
 

\begin{proof} 
    Given distinct vertices $v_1, v_2, v_3, v_4 \in [n]$, let 
${\cal C}(v_1, v_2, v_3, v_4)$
    be the event that $G(n)$ contains a cycle with edges 
$\{v_1v_2, v_2v_3, v_3v_4, v_1v_4\}$ and
    $S_{v_2} \cap S_{v_4} = \emptyset$. 
Let $Z$ denote 
the number of tuples $(v_1,v_2,v_3,v_4)$
    of distinct vertices in $[n]$ such that
    ${\cal C}(v_1,v_2,v_3,v_4)$ hold. We will show below that
    \begin{equation} \label{eq.Xcyc}
       \E Z = O(1).
    \end{equation}

    Let $S \subseteq [n]$ be the (lexicographically first) largest clique of $G(n)$. Denote
$s = |S|$. If $s \le 2$ or there is a pair $\{x,y\} \subseteq S$, $x \ne y$ such that
    $G(n)[\Gamma(x) \cap \Gamma(y)]$ is a clique, 
then the algorithm returns a clique of size $s$. Otherwise, for each such pair $\{x,y\}$ 
    there are $x',y' \in   \Gamma(x) \cap \Gamma(y)$, 
$x' \ne y'$ with $x' y' \not \in E(G(n))$. That is,
    ${\cal C}(x,x',y,y')$ holds and
      $\binom s 2 \le Z$.
    Thus, if $\binom s 2 > Z$, 
the algorithm returns a clique $C$ of size  $s$. Otherwise, the algorithm may fail and return a clique $C$ of size $1$.
    In any case we have that
    \[
      s  - |C| \le \sqrt {2 Z} + 1
    \]
    and using (\ref{eq.Xcyc})
    \[
     \E (\omega(G(n)) - |C|)^2 \le \E (\sqrt{2 Z} + 1)^2 = O(1).
    \]
   Also if $\omega(G(n)) \ge \omega_n$ whp, then by  (\ref{eq.Xcyc}) and
 Markov's inequality
   \[
   \pr(|C| \ne \omega(G(n))) 
\le 
\pr(\omega(G(n)) < \omega_n) 
+ 
\pr \left(Z \ge \binom {\omega_n} 2 \right) \to 0.
   \]
   It remains to show  (\ref{eq.Xcyc}).
    What is the probability of the event ${\cal C}(1,2,3,4)$? Clearly, 
${\cal C}(1,2,3,4)$ implies at least one of the following events:
    \begin{itemize}
        \item ${\cal A}_1: $ there are distinct attributes $w_1, w_2, w_3, w_4 \in W$ 
such that $w_1 \in S_1 \cap S_2$,
$w_2 \in S_2 \cap S_3$, $w_3 \in S_3 \cap S_4$ and $w_4 \in S_1 \cap S_4$;
        \item ${\cal A}_2: $ there are distinct $w_1, w_2, w_3 \in W$, 
such that $w_1 \in S_1 \cap S_2 \cap S_3$, $w_2 \in S_3 \cap S_4$ and $w_3 \in S_1 \cap S_4$;
        \item ${\cal A}_3: $ there are distinct $w_1, w_2, w_3 \in W$, 
such that $w_1 \in S_1 \cap S_2$, $w_2 \in S_2 \cap S_3$ and $w_3 \in S_1 \cap S_3 \cap S_4$;
        \item ${\cal A}_4: $ there are distinct $w_1, w_2 \in W$, 
such that $w_1 \in S_1 \cap S_2 \cap S_3$ and $w_2 \in S_1 \cap S_3 \cap S_4$.
    \end{itemize}
    Conditioning on $X_1, X_2, X_3, X_4$ and using the union bound and independence we obtain, similarly as in Lemma~\ref{lem.PRK4}
    \begin{align*}
        &\pr ({\cal A}_1) \le (m)_4 \E \frac {(X_1)_2 (X_2)_2 (X_3)_2 (X_4)_2} {(m)_2^4} \le \frac { (\E Y^2)^4} {n^4};
        \\  &\pr({\cal A}_2) = \pr({\cal A}_3) \le (m)_3 \E \frac {(X_1)_2 X_2 (X_3)_2 (X_4)_2} { (m)_2^3 m} \le \frac {(\E Y^2)^3 (\E Y)} {m^{0.5} n^{3.5}};
    \\  &\pr({\cal A}_4) \le (m)_2 \E \frac{(X_1)_2 X_2 (X_3)_2 X_4} {(m)_2^2 m^2} \le \frac {(\E Y^2)^2 (\E Y)^2} {m n^3}.
    \end{align*}
Furthermore, by symmetry,
    \begin{displaymath}
    \E X 
\le 
(n)_4 \left( \pr({\cal A}_1) + \pr({\cal A}_2) + \pr({\cal A}_3) + \pr({\cal A}_4) \right) 
 =O(1).
    \end{displaymath}
\end{proof}

\bigskip

\begin{prop}\label{prop.monocliquetime}
    Consider a sequence of
    random intersection graphs $\{G(n)\}$ as in Theorem~\ref{thm.omega.mono}. 
	\textsc{Mono-Clique} can be implemented so that its expected running time on $G(n)$ is $O(n)$.
\end{prop}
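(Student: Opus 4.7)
The plan is to show that each of the four blocks of \textsc{Mono-Clique}---building the graph, computing all the values $D(uv)$, sorting the edges by $D$, and scanning them performing clique tests---can be implemented in $O(n)$ expected time, using adjacency lists augmented by per-vertex hash sets of neighbours that answer edge queries in $O(1)$ expected time. Storing the graph this way costs $O(n + e(G(n)))$, and $\E e(G(n)) = (n/2)\E D_1(n) = O(n)$ by Lemma~\ref{lem.degvar}, since $\E Y(n)^2 = O(1)$ forces $\E D_1(n) = (\E Y(n))^2 + o(1) = O(1)$.

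For the values $D(uv)$ I would use a ``list triangles from every vertex'' pattern: for every vertex $v$ and every ordered pair of neighbours $(x,y)$ of $v$, test whether $xy \in E(G(n))$ and, if so, increment a counter $D(xy)$. Each triangle is detected three times, so after one pass $D(uv) = |\Gamma(u) \cap \Gamma(v)|$ for every edge, and the running time is $O(\sum_v \deg(v)^2)$, whose expectation equals $n \E D_1(n)^2 = O(n)$ by Lemma~\ref{lem.degvar}. Since $D(uv) \in \{0,1,\dots,n\}$, bucket sort then arranges the edges in decreasing order of $D$ in time $O(n + e(G(n))) = O(n)$.

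The delicate step is the clique-test loop. Each iteration reconstructs $S = \Gamma(u) \cap \Gamma(v)$ in time $O(\deg u + \deg v)$ and tests whether $S$ is a clique with $O(|S|^2) = O(D(uv)^2)$ edge queries. Bounding by the pessimistic scenario that every edge is visited yields
\begin{displaymath}
\sum_{uv \in E(G(n))} \bigl( \deg u + \deg v + D(uv)^2 \bigr) \le 2 \sum_v \deg(v)^2 + \sum_{uv \in E(G(n))} D(uv)^2 .
\end{displaymath}
The first term is $O(n)$ in expectation as above, so the crux is showing $\E \sum_{e} D(e)^2 = O(n)$. Expanding the square and splitting by whether the two common neighbours coincide, one obtains
\begin{displaymath}
\sum_{e \in E(G(n))} D(e)^2 = 3\, N_{\triangle}(G(n)) + 12\, N_{K_4}(G(n)) + 2\, N_{K_4^{-}}(G(n)),
\end{displaymath}
where $N_{\triangle}$, $N_{K_4}$ and $N_{K_4^{-}}$ count triangles, copies of $K_4$, and induced copies of $K_4^{-}$ in $G(n)$. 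The term $N_{K_4^{-}}$ is dominated (up to a constant factor) by the random variable $Z$ used in the proof of Theorem~\ref{thm.algfinite}, so $\E N_{K_4^{-}} = O(1)$. For the other two I would split each count into a monochromatic part and a non-monochromatic remainder: the monochromatic pieces are bounded by $m\, \E \binom{|T_w|}{r}$ with $|T_w| \sim \mathrm{Bin}(n, \E Y(n)/\sqrt{mn})$, giving $O(n^{3/2}/m^{1/2}) = O(n)$ for $r = 3$ and $O(n^2/m) = O(n)$ for $r = 4$, thanks to the condition $m = \Omega(n)$ implicit in Remark~\ref{rmk.omega.mono}; the non-monochromatic parts are handled by the rainbow-style moment accounting used in Lemma~\ref{lem.PRK4} and Lemma~\ref{lem.noRK4} and turn out to be $O(1)$.

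The hardest part will be the non-monochromatic $K_4$ contribution, where one must enumerate the ways two or more attributes can jointly cover the six edges of a $K_4$ and bound each configuration by moments of $X(n)$, an analysis in the same spirit as but slightly more elaborate than the book estimates inside the proof of Lemma~\ref{lem.noRK4}. Once that accounting is in place, every block of \textsc{Mono-Clique} runs in $O(n)$ expected time and the proposition follows.
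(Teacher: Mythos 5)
Your overall architecture is sound and in places cleaner than the paper's: the bucket sort on $D(uv)$ neatly replaces the paper's repeated maximum-extraction (which the paper has to pay for with a separate argument bounding $\E\bigl(Z\cdot e(G(n))\bigr)$ via independence of $\cc(i,j,k,l)$ from disjoint edges), and your identity $\sum_{e} D(e)^2 = 3N_{\triangle} + 12 N_{\ck_4} + 2N_{\ck_4^{-}}$ is correct, as are the bounds $\E N_{\ck_4^{-}} \le \E Z = O(1)$, the monochromatic counts $O(n^{3/2}m^{-1/2})$ and $O(n^2/m)$, and the rainbow-triangle count $O(1)$. (Two small citation slips: $\E e(G(n))=O(n)$ and $\E\sum_v D_v^2=O(n)$ should be sourced from the unconditional one-sided bounds $\E D_1\le(\E Y)^2$ and $\E D_1(D_1-1)\le \E Y^2\,\E Y$ of (\ref{eq.ED2}), not from the conclusions of Lemma~\ref{lem.degvar}, which require the extra hypothesis (\ref{eq.uni2}) not assumed in Theorem~\ref{thm.omega.mono}.)

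The genuine gap is the step you yourself flag as hardest: the expected number of non-monochromatic copies of $\ck_4$ is asserted to be $O(1)$ but never proved. This is not a routine omission, because several covering configurations (e.g.\ one attribute covering a triangle $\{1,2,3\}$ and three further attributes covering the edges at vertex $4$, or a fully rainbow $\ck_4$) produce moment expressions involving $\E X^3$ or $\E X^4$, which are \emph{not} controlled by the hypothesis $\E Y^2=O(1)$; each such configuration needs a separate capping device (the ``$a\wedge 1\le a^{4/6}$'' trick of Lemma~\ref{lem.noRK4}, or the crude bound $X_v\le m$) before the second-moment hypothesis can be applied, and there are roughly half a dozen configurations to enumerate. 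The paper sidesteps this entire case analysis by observing that $\sum_{e}D(e)\bigl(D(e)-1\bigr)$ is exactly the number of ordered $4$-cycles counted through their diagonals, hence is at most ${\tilde Z}$, the total number of $4$-cycle tuples; it then bounds $\E{\tilde Z}=O(n)$ by reusing the four covering events $\ca_1,\dots,\ca_4$ already estimated in the proof of Theorem~\ref{thm.algfinite} plus the single new event $\ca_5$ (one attribute shared by all four vertices), whose contribution $(n)_4\,\pr(\ca_5)\le (\E Y)^4 n^2/m=O(n)$ is precisely your monochromatic term. In effect the $4$-cycle count absorbs your $N_{\ck_4}$ and $N_{\ck_4^{-}}$ terms simultaneously and makes the $\ck_4$ enumeration unnecessary; as written, your proof is incomplete without it.
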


%

\begin{proof}
  Let ${\tilde Z}$ denote the number of  4-cycles in $G(n)$, i.e., the number of
  tuples $(v_1, v_2,v_3,v_4)$ of distinct vertices in $[n]$, such that
  $v_1v_2, v_2v_3, v_3v_4, v_1 v_4 \in E(G(n))$. We will prove below that
  \begin{equation} \label{eq.Zcyc}
	  \E {\tilde Z} = O(n).
  \end{equation}
    Consider the running time of the first loop.    
    We can assume that the elements in each list in the adjacency list structure 
are sorted in increasing order (recall that vertices are 
elements of  $V=[n]$).
    Otherwise, given $G(n)$, they can be sorted using any standard sorting 
algorithm in time $O(n + \sum_{v \in [n]} D_v^2$),
    where $D_v = d_{G(n)}(v)$ is the degree of $v$ in $G(n)$.
    The intersection of two lists of lengths $k_1$ and $k_2$ can be found 
in $O(k_1 + k_2)$ time,
    so that expected total time for finding common neighbours is 
    \[ 
      O\left(n + \E \sum_{uv \in E(G(n))} (D_u + D_v)\right) 
=  
O \left(n + \E \sum_{v \in [n]} D_v^2 \right) = O(n).
    \]
    The last estimate follows by (\ref{eq.ED2}) in the proof of Lemma~\ref{lem.degvar}.

    The second loop can be implemented so that the next edge $uv$ with
    largest value of $D(uv)$ is found at each iteration 
(i.e., we do not sort the list of edges in advance). 
    In this way picking
    the next edge requires at most $c e(G(n))$ steps 
$c$ is a universal constant. We recall 
that the number of edges $uv \in E(G)$ with 
$\Gamma(u,v):=\Gamma(u)\cap \Gamma (v)\not=\emptyset$
that fail to induce a clique 
 is at most the number $Z$
 of 
cycles considered in the proof of Theorem~\ref{thm.algfinite} above. Therefore,  
 the total number of steps used in picking $D(uv)$ 
in decreasing order
 is at most 
\begin{displaymath}
 Z\,e(G(n))
=
\sum_{(i,j,k,l)}{\mathbb I}_{{\cal C}(i,j,k,l)} e(G(n)).
\end{displaymath}
Now 
\[
 e(G(n)  = \sum_{s<t:\, \{s,t\}\cap \{i,j,k,l\}=\emptyset} {\mathbb I}_{\{s\sim t\}} + \sum_{s<t:\, \{s,t\}\cap \{i,j,k,l\}\ne\emptyset} {\mathbb I}_{\{s\sim t\}}.
\]
Note, that the second sum on the right is at most $4 n$.
Also, if $\{s,t\} \cap \{i,j,k,l\} = \emptyset$, the events $s \sim t$ and ${\cal C}(i,j,k,l)$ are independent, therefore
\begin{align*}
 \E \left( {\mathbb I}_{\cc(i,j,k,l)}  \sum_{s<t:\, \{s,t\}\cap \{i,j,k,l\}=\emptyset} {\mathbb I}_{\{s\sim t\}} \right) 
        &=  \pr(\cc(i,j,k,l))  \sum_{s<t:\, \{s,t\}\cap \{i,j,k,l\}=\emptyset} \pr(s\sim t) 
	\\
 &\le \pr(\cc(i,j,k,l)) \E e(G(n)).
\end{align*}
Finally, invoking
the simple bound 
$\E e(G(n))=\tbinom{n}{2}\pr(u\sim v)=O(n)$,
and  (\ref{eq.Xcyc})  
we get
\begin{displaymath}
	\E Z\,e(G(n)) \le (\E e(G(n)) + 4 n) \sum_{(i,j,k,l)} \pr(\cc(i,j,k,l)) = (\E e(G(n)) + 4 n) \E Z = O(n).
\end{displaymath}


    Now let us estimate the time of the rest of the iteration of the second loop. 
The total expected time to find common neighbours is again $O(n)$, 
so we only consider the time spent for checking if 
$\Gamma(u,v)$ is a clique. This requires $c \, s^2_{uv}$ steps, where
 we denote $s_{uv} = |\Gamma(u,v)|$. 
Observe that  $u,v$ and $\Gamma(u,v)$
    yield at least $s_{uv} (s_{uv}-1)$ 4-cycles in $G(n)$ of the form
    $(u,x,v,y)$, $x,y \in \Gamma(u,v)$.
    Summing over all edges $uv$ and noticing that each 4-tuple corresponding to 4-cycle in $G(n)$ can be obtained at most once, we get
    \[
    {\tilde Z} 
\ge 
\sum_{uv \in E(G(n))} s_{uv}(s_{uv} -1) 
\ge   
\sum_{uv \in E(G(n))} (s_{uv}^2-1)/2.
    \]
    So using (\ref{eq.Zcyc}) and the fact that $\E e(G(n))=O(n)$ we obtain
\[
\E \sum_{uv \in E(G(n))}  s_{uv}^2
\le 2 \E \tilde{Z} + \E e(G(n)) = O(n).
\]
Finally, let us bound $\E {\tilde Z}$. Let ${\cal A}_i$, $1\le i \le 4$ 
be as in the proof of Theorem~\ref{thm.algfinite}. Let ${\cal A}_5$ 
    be the event that there is $w \in W$ such that $w \in S_1 \cap S_2 \cap S_3 \cap S_4$. Using the union bound
    \[
    \pr({\cal A}_5) \le m \E \frac {X_1 X_2 X_3 X_4} {m^4} = \frac {(\E Y)^4} {m n^2}.
    \]
    Similarly as in the proof of Theorem~\ref{thm.algfinite} 
    (we have to consider three other events similar to $A_2$ and $A_4$),
    \[
    \E {\tilde Z} \le (n)_4 \left( \pr(A_1) + 4 \pr(A_2) + 2 \pr(A_4) + \pr(A_5) \right) 
= O(n).
    \]
\end{proof}

\bigskip

Combining the next lemma with Theorem~\ref{thm.omega.mono}
we can show that \textsc{Mono-Clique} whp finds a clique of size at least $\omega'(G(n))$.

\begin{lemma} \label{lem.twogood}
    Let $\{G(n)\}$ be as in Theorem~\ref{thm.omega.mono} and let $M = M(G(n))$
    be the monochromatic clique of size $\omega'(G(n))$ generated by
    the attribute with the smallest index. 
    Then
    whp $G(n)$ has an edge $uv$ such that $\{u, v\} \cup (\Gamma(u) \cap \Gamma(v)) = M$.
\end{lemma}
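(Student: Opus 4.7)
The plan is to show that, whp, only $O_P(1)$ pairs $\{u,v\}\in\binom{M}{2}$ have a common neighbour outside $M$ in $G(n)$. Since $M$ is a clique, any such ``good'' pair $\{u,v\}$ satisfies $\Gamma(u)\cap\Gamma(v) = M\setminus\{u,v\}$, so $\{u,v\}\cup(\Gamma(u)\cap\Gamma(v)) = M$ and the edge $uv$ is as required. A good pair will exist because $|M|=\omega'(G(n)) \to \infty$ in probability (by Theorem~\ref{thm.omega.mono} together with the classical $M(N,m)\to\infty$ as $N\to\infty$), so $\binom{|M|}{2} \to \infty$ whp, dwarfing any $O_P(1)$ count of bad pairs.

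To bound the number of bad pairs, I will introduce
\[
Z = \#\bigl\{(u,v,x,w):\ u,v,x\in V\text{ distinct},\ w\in W,\ u,v\in T_w,\ x\notin T_w,\ x\sim u,\ x\sim v\bigr\}.
\]
Let $w^*$ denote the attribute with the smallest index achieving $|T_{w^*}|=\omega'(G(n))$. Each bad pair $\{u,v\}\subseteq M$ contributes at least two quadruples to $Z$: both orderings of the pair with $w=w^*$, each paired with at least one external common neighbour $x$. Hence the number of bad pairs is at most $Z/2$, and it suffices to prove $\E Z = O(1)$.

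For the first-moment bound, fix $(u,v,x,w)$ and apply $\ii(S\cap T\ne\emptyset)\le|S\cap T|$. Conditioning on $S_x$ with $w\notin S_x$, and using the independence of $S_u,S_v,S_x$ together with the fact that any $w'\in S_x$ is automatically $\ne w$,
\[
\E\bigl[\ii(w\in S_u)\,|S_u\cap S_x|\bigm| S_x\bigr] = \sum_{w'\in S_x}\Pr(w,w'\in S_u) = |S_x|\cdot\frac{\E[X(X-1)]}{m(m-1)},
\]
and analogously for $v$. Multiplying the two contributions and averaging over $S_x$ gives the per-quadruple bound of order $\E X^2 \cdot (\E X^2/m^2)^2 = (\E X^2)^3/m^4$. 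Using $\E X^2=(m/n)\E Y^2$ together with $\E Y^2=O(1)$ (which follows from $\E Y=\Theta(1)$ and $Var\,Y=O(1)$), summing over the $O(n^3m)$ choices of $(u,v,x,w)$ yields $\E Z = O((\E Y^2)^3) = O(1)$.

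The only slightly delicate point is that $M=T_{w^*}$ is random through $w^*$, which depends on all of the data; summing in the definition of $Z$ over all attributes $w\in W$ (rather than conditioning on $w^*$) is a lossless overcount of the real bad witnesses. After that, Markov's inequality produces $Z = O_P(1)$, and combining with $|M|\to\infty$ in probability closes the argument.
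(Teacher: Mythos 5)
The paper defers the proof of this lemma to its extended version, so there is no in-paper argument to compare against; I am judging your proposal on its own. Your first-moment computation is correct ($\E Z = O((\E Y^2)^3) = O(1)$), summing over all $w\in W$ is indeed a lossless way to dominate the bad pairs of the random clique $M=T_{w^*}$, and a pair of $M$ with no common neighbour outside $M$ does yield the required edge. The gap is the step ``$|M| = \omega'(G(n)) \to \infty$ in probability''. The fact you invoke --- that $M(N,m)\to\infty$ whenever $N\to\infty$ --- is false when $m$ grows fast relative to $N$: if $N = O(m^{1-\delta})$ then $M(N,m)=O_P(1)$, since for $k>1/\delta$ the expected number of bins with $k$ balls is $O(N^k/m^{k-1})=o(1)$. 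The hypotheses of Theorem~\ref{thm.omega.mono} are only $\E Y(n)=\Theta(1)$ and $Var(Y(n))=O(1)$, which by Remark~\ref{rmk.omega.mono} force $n=O(m)$ but do not exclude $m\gg n$. For instance $m=n^2$ with deterministic set size $X\equiv\lceil (m/n)^{1/2}\rceil$ gives $\E Y\to 1$, $Var\,Y\to 0$, hence $N=\Theta(n^{3/2})$ balls in $n^2$ bins and $\omega'(G(n))\in\{3,4\}$ whp. Then $\binom{|M|}{2}\le 6$ while your bound on the number of bad pairs is only $Z/2=O_P(1)$; Markov's inequality gives $\pr(Z\ge t)\le C/t$, which does not vanish for constant $t$, so ``whp a good pair exists'' does not follow.

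The lemma still appears to be true in that regime, but for a reason your global count cannot see: only the quadruples with $w=w^*$ are relevant, and for a fixed pair $u,v$ the expected number of common neighbours supplied by the remaining attributes is $O\bigl(n\cdot \E Y^2(\E Y)^2/n^2\bigr)=O(1/n)$, so (after handling the conditioning on $w^*$) the expected number of bad pairs inside $M$ is $O(|M|^2/n)=o(1)$ and whp \emph{no} pair of $M$ is bad. In short, your argument is complete only under the additional assumption $m=O(n)$ (hence $m=\Theta(n)$), where indeed $\omega'(G(n))\sim \ln n/\ln\ln n\to\infty$; to cover the full range of Theorem~\ref{thm.omega.mono} you must split into the cases $m=O(n)$ and $m/n\to\infty$ and treat the latter (where $|M|$ stays bounded) by a conditional first-moment bound of the kind just sketched rather than by comparing $\binom{|M|}{2}$ with an $O_P(1)$ quantity.
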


The proof is given in the extended version of the paper \cite{extended}.

\section{Equivalence between set size and degree parameters}
\label{sec.tailequivalence}
Here we prove Lemmas \ref{lem.degrees} and \ref{lem.degvar}. In the proof we write
$X=X(n)$, $Y=Y(n)$, and $D_1=D_1(n)$. We denote $X_1,X_2,\dots$ the sizes of subsets 
$S_1,S_2,\dots \subseteq W$ prescribed to the vertices $1,2,\dots\in V=[n]$ of $G(n)$.

\medskip

\begin{proofof}{Lemma~\ref{lem.degrees}}
    We start by showing that if either $\E Y$ or $\E D_1$ converges
    and for some positive sequence $\{a_n\}$  converging to zero 
(we write $a=a_n$ for short), 
\begin{equation}\label{2013_09_02}
 \E Y \I_{Y >(a n)^{1/2}} \to 0
\end{equation}
    then
    \begin{equation} \label{eq.YD1}
       \E Y = (\E D_1)^{1/2} + o(1).
   \end{equation}

    We note that $\E D_1= (n-1)\pr(S_1 \cap S_2 \ne \emptyset)$. We estimate this 
probability
using the  inequalities, 
see Lemma 6 in \cite{bloznelis11},
    \begin{equation} \label{eq.estP}
\frac {X_1 X_2} {m}
\ge 
\pr(S_1 \cap S_2 \ne \emptyset | X_1, X_2)
\ge
\max\left\{0,\left(\frac {X_1 X_2} m - \frac {X_1^2 X_2^2} {m^2}\right)\right\}=:Z. 
  \end{equation}
    Notice that $\E Y = \Omega(1)$. This is clear if $\E Y \to y \in (0; \infty)$. 
    Otherwise, we have $\E D_1 \to d\in (0; \infty)$ and, by the 
first inequality of (\ref{eq.estP}),
    \[
   (n-1) \frac { (\E Y)^2} n 
\ge 
(n-1)\pr(S_1 \cap S_2 \ne \emptyset)
=
\E D_1.
    \]
Furthermore, from $\E Y=\Omega(1)$ and (\ref{2013_09_02}) we conclude that 
$\E X \I_{X\ge (am)^{1/2}}=o(\E X)$.
    Using this bound we estimate $\E Z$ from below
    \begin{align}
        \E Z &
\ge 
\E Z \I_{X_1 X_2 \le a m} 
\ge  
(1-a)m^{-1} \E X_1 X_2 \I_{X_1 X_2 \le am} 
\nonumber
        \\ & 
\ge (1-a) m^{-1} \E X_1 \E X_2 - m^{-1} \E X_1 X_2\I_{X_1 X_2 > am}, 
\label{eq.unifbound}
 \end{align}
where
    \begin{eqnarray}\label{2013-09-04++}
        \E {X_1 X_2  \I_{X_1 X_2 > am}} 
&
\le 
&
\E {X_1 X_2}\left( \I_{X_1 > (am)^{1/2}} + \I_{X_2 > (a m)^{1/2}} \right)
\\
\nonumber 
&
\le 
&
2 \E X \E X \I_{X > (a m)^{1/2}} 
\\
\nonumber
&
 = 
&
o ( (\E X) ^ 2).
    \end{eqnarray}
Hence, $\E Z\ge (1-o(1))(\E X)^2$.  Combining this inequality with (\ref{eq.estP}) we obtain
    \[
    \pr(S_1 \cap S_2 \ne \emptyset) \sim m^{-1}(\E X)^2, 
    \]
thus proving (\ref{eq.YD1}).

 It remains to prove that  (\ref{eq.pldef2})$\Leftrightarrow$(\ref{eq.degheavytail}).
Since both implications
are shown in much the same way, we 
only prove
(\ref{eq.pldef2})$\Rightarrow$(\ref{eq.degheavytail}). 
For this purpose 
we 
fix
$0<{\tilde \eps}<\min\{\eps,\, \eps_0\}$
and show that for each $0<\delta<1$ and each sequence $\{t_n\}$ with
    $n^{1/2-\tilde{\eps}} \le t_n \le n^{1/2+\tilde{\eps}}$ 
\begin{eqnarray}\label{eq.Dlower}        
&&
\liminf_n\bigl(\pr(Y_1(n) \ge t_n)/\pr (D_1 (n) \ge t_n)\bigr)
\ge 
(d^{1/2} (1+\delta))^{-\alpha},
\\
\label{eq.Dupper}
&&
\limsup_n\bigl(\pr(Y_1(n) \ge t_n)/\pr (D_1 (n) \ge t_n)\bigr) 
\le 
(d^{1/2} (1-\delta))^{-\alpha}.
\end{eqnarray}
Here the random variable $Y_1(n):=(n/m)^{1/2}X_1(n)$ has the same distribution as $Y(n)$.
We prove   (\ref{eq.Dlower}) and (\ref{eq.Dupper}) by contradiction.
  
\textit{Proof of  (\ref{eq.Dlower})}. 
    Suppose there is
    an increasing sequence $\{n_k\}$ of positive integers and
    a sequence $\{b_k\}$ with
$ n_k^{1/2 - \tilde{\eps}} \le b_k \le n_k^{1/2 +\tilde{\eps}}$ 
   such that, for some $0<\delta<1$,
\begin{equation}\label{2013_09_04}
\pr(Y_1(n_k) \ge b_k) 
< 
(d^{1/2} (1 + \delta))^{-\alpha} \pr(D_1(n_k) \ge b_k ), \quad k = 1, 2, \dots.
\end{equation}
    Define  $\{l_k\}$ by the relation 
$b_k = d^{1/2} (1 + \delta/2) l_k$, $k\ge 1$. 
Introduce events 
${\cal A}_k=\{D_1(n_k)\ge b_k\}$, ${\cal B}_k=\{Y_1(n_k)\ge l_k\}$ and write
    \begin{equation} \label{eq.Dnksplit}
     \pr( {\cal A}_k) = \pr({\cal A}_k\cap {\cal B}_k) + 
\pr({\cal A}_k\cap{\bar{\cal B}}_k).  
    \end{equation}
  In what follows we drop the subscript $k$ and write $b,l, n,m$ instead 
of $b_k,l_k,n_k,m_k$.
We note that (\ref{eq.pldef2}) together with (\ref{2013_09_04}) imply
\begin{displaymath}
\pr({\cal A}\cap {\cal B}) 
\le 
\pr({\cal B})
\sim  
d^{\alpha/2} (1 + \delta/2)^{\alpha} \pr(Y_1(n) \ge b) 
\le c_1 \pr({\cal A}),
    \end{displaymath}
where the constant $c_1 = \left((1 + \delta/2)/(1 + \delta)\right)^{\alpha} < 1$. 
Next we show that
$\pr({\cal A}\cap{\bar{\cal B}})=O(n^{-10})$ thus obtaining a contradiction to 
(\ref{2013_09_04}),
(\ref{eq.Dnksplit}).    

Denote $x = \left \lfloor (m/n)^{1/2} l \right  \rfloor$. Conditionally, given
the event ${\cal C}=\{X_1(n)=x\}$, the random variable
$D_1(n)$ has binomial distribution $Bin(n-1, p)$ with success probability
$p=\pr(S_1\cap S_2\not=\emptyset\bigr|\, |S_1|=x)$ satisfying $p\sim d^{1/2}l/n$. Indeed, 
the first inequality of
 (\ref{eq.estP}) implies
\begin{equation}\nonumber
 p
\le 
\frac{x\E X_2}{m}
=
\frac{x(m/n)^{1/2}\E Y}{m}
\sim
d^{1/2}\frac{l}{n}.
\end{equation}
Here we used $\E Y\to d^{1/2}>0$. 
The second inequality of  (\ref{eq.estP}) implies, see  (\ref{eq.unifbound}),
\begin{equation}\nonumber
 p\ge \frac{1-a}{m}x\E X_2{\mathbb I}_{\{xX_2<am\}}
=
\frac{1-a}{m}x(\E X_2-r)
\sim
\frac{x\E X_2}{m}.
\end{equation}
Here $r=\E X_2{\mathbb I}_{\{xX_2\ge am\}}=o(\E X_2)$, for $a=a(n_k)=\ln^{-1}n_k$, cf.
(\ref{2013-09-04++}).

Next,  since
$b\sim (1+\delta/2)np$ and $np\sim d^{1/2}l=\Omega(n^{1/2-{\tilde \eps}})$
we obtain, 
by Chernoff's inequality, $\pr({\cal A}|{\cal C})
=O(n^{-10})$. Now, using the  inequality
$\pr({\cal A}|Y_1(n)=y)\le \pr({\cal A}|{\cal C})$, for $y\le l$, we obtain 
\begin{equation}\label{2013-09-04+4}
 \pr({\cal A}\cap {\bar{\cal B}})=\E \pr({\cal A}|Y_1(n)){\mathbb I}_{\{Y(n)\le l\}}
\le \pr({\cal A}|{\cal C})=O(n^{-10}).
\end{equation}

\textit{Proof of  (\ref{eq.Dupper})}.
Suppose there is
    an increasing sequence $\{n_k\}$ of positive integers and
    a sequence $\{b_k\}$ with
$ n_k^{1/2 - \tilde{\eps}} \le b_k \le n_k^{1/2 +\tilde{\eps}}$ 
   such that, for some $0<\delta<1$,
\begin{equation}\label{2013_09_04_}
\pr(Y_1(n_k) \ge b_k) 
>
(d^{1/2} (1 - \delta))^{-\alpha} \pr(D_1(n_k) \ge b_k ), \quad k = 1, 2, \dots.
\end{equation}
    Define  $\{l_k\}$ by the relation 
$b_k = d^{1/2} (1 - \delta/2) l_k$, $k\ge 1$. 
We write
  \begin{equation}\label{eq.Duppereq}
   \pr(D_1(n_k) \ge b_k) =\pr(Y_1(n_k) \ge l_k) \pr(D_1(n_k) \ge b_k | Y_1(n_k) \ge l_k).
  \end{equation}
  We note that, by (\ref{eq.pldef2}) and (\ref{2013_09_04_}), the first term on 
the right is at least 
  $(c_2 + o(1))$ $ \pr (D_1(n_k)$ $ \ge b_k)$
  where the constant $c_2 = \left( (1 - \delta/2)/(1-\delta) \right)^\alpha > 1$.
  Finally, we 
obtain a contradiction, by showing that the second term of (\ref{eq.Duppereq}) 
is $1 - O(n^{-10})$. Here  we proceed as in (\ref{2013-09-04+4}) above. We 
write
\begin{displaymath}
\pr(D_1(n_k)<b_k|Y_1(n_k)\ge l_k)\le \pr(D_1(n_k)<b_k|{\cal C}) 
\end{displaymath}
and show that  binomial probability on the right-hand side is $O(n^{-10})$ using
 Chernoff's inequality.

\end{proofof}


\medskip

\begin{proofof} {Lemma~\ref{lem.degvar}} 
    The identity (\ref{eq.ED12}) follows from (\ref{eq.YD1}) since
    \[
      \E Y \I_{Y > \eps_n n^{1/2}} \le (\E Y^2 \I_{Y > \eps_n n^{1/2}})^{1/2} \to 0.
      \] 
    Let us show (\ref{eq.VarD}). 
Denote $N$ the number of $2$-stars in $G=G(n)$ centered at vertex $1\in V=[n]$. Introduce 
the events ${\cal A}_{ij}=\{i\sim j\}$, $i,j\in V$.  
Write, for short, ${\cal A}={\cal A}_{12}\cap {\cal A}_{13}$. Let ${\tilde \pr}$ denote the
conditional probability given the sizes $X_1,X_2, X_3$ of the random subsets prescribed 
to vertices $1,2,3\in V$.
We remark that (\ref{eq.VarD}) follows from (\ref{eq.ED12}) combined with the 
simple identities
\begin{displaymath}
\E D_1 (D_1 - 1) = 2 \E N=(n-1)(n-2)\pr({\cal A}),
\end{displaymath}
and the inequalities 
\begin{equation}\label{2013_09_06}
(\E Y)^2\E Y^2
\ge
n^2\pr({\cal A})
\ge
 (1-o(1))(\E Y)^2\E Y^2.
\end{equation}
Let us prove (\ref{2013_09_06}). For this purpose we write 
(using the conditional independence of 
events ${\cal A}_{12}$ and ${\cal A}_{13}$, given $X_1,X_2,X_3$)
\begin{displaymath}
\pr({\cal A})
=
\E{\tilde\pr}({\cal A})
=
\E{\tilde \pr}({\cal A}_{12}){\tilde \pr}({\cal A}_{13}) 
\end{displaymath}
and evaluate conditional probabilities ${\tilde \pr}({\cal A}_{ij})$ using
(\ref{eq.estP}). 
From the first inequality of (\ref{eq.estP}) we obtain the first inequality of 
(\ref{2013_09_06})
\begin{displaymath}
\pr({\cal A})
=
\E{\tilde \pr}({\cal A}_{12}){\tilde \pr}({\cal A}_{13})
\le
\E(X_1^2X_2X_3)/m^2
=
(\E Y)^2\E Y^2/n^2.
\end{displaymath}
Thus, even without the assumption (\ref{eq.uni2}) (we use this fact 
this in the proof of Proposition~\ref{prop.monocliquetime}),
we have
\begin{equation} \label{eq.ED2}
    \E D_1 \le \E Y \quad \mbox{and} \quad \E D_1 (D_1-1) \le \E Y^2 \E Y.
\end{equation}

To show the second inequality of (\ref{2013_09_06}) we apply the second inequality 
of (\ref{eq.estP}) and use  truncation. We denote 
${\mathbb I}_{i}={\mathbb I}_{\{X_i\le\eps_nm^{1/2}\}}$,  
${\bar {\mathbb I}}_{i}=1-{\mathbb I}_{i}$ and write, cf. (\ref{eq.unifbound}),
\begin{eqnarray}\nonumber
 \pr(A)
&
\ge
&
\E{\tilde \pr}({\cal A}){\mathbb I}_1{\mathbb I}_2{\mathbb I}_3
\ge
(1-\eps_n^2)^2\E (X_1^2X_2X_3/m^2){\mathbb I}_1{\mathbb I}_2{\mathbb I}_3
\\
\nonumber
&
\ge 
&
(1-\eps_n^2)^2\E (X_1^2X_2X_3/m^2)
(1-{\bar{\mathbb I}}_1-{\bar{\mathbb I}}_2-{\bar{\mathbb I}}_3)
\\
\nonumber
&
=
&
(1-o(1))(\E Y)^2\E Y^2/n^2.
\end{eqnarray}
In the last step we used the fact that $\E Y^2\ge (\E Y)^2=\Omega(1)$ and the bounds
\begin{eqnarray}\nonumber
&&
\E X_1^2 {\bar{\mathbb I}}_1
= 
(m/n)\, \E Y^2 {\mathbb I}_{\{Y > \eps_n n^{1/2}\}} 
= 
o(\E X^2),
\\
\nonumber
&&
\E X_j {\bar{\mathbb I}}_j  
= 
(m/n)^{1/2} \E Y {\mathbb I}_{\{Y > \eps_n n^{1/2}\}} 
= 
o(\E X),
\qquad
j=2,3.
\end{eqnarray}
\end{proofof}

\section{Concluding remarks}

In this work we determined the order of the clique number in $G(n,m,P)$ for
a wide range of $m = m(n)$ and $P = P(n)$. We saw that in sparse power-law random intersection 
graphs with unbounded degree variance, the clustering property of $G(n,m,P)$ has little influence
in the formation of the maximum clique. This suggests that simpler models, 
such as the one in \cite{jln10}, may be preferable 
in the case of 
very heavy degree tails.
However, when the degree variance is bounded, most random graph models,
including the Erd\H{o}s-R\'enyi graph and the model of \cite{jln10} have 
only bounded size cliques whp.
In contrast, we showed that in random intersection graphs the clique number can still diverge slowly. 

We have a kind of ``phase transition'' 
as the tail index $\alpha$ for
 the random subset size (degree) varies, see (\ref{eq.pldef2}).
Assume, for example that $m = \Theta(n)$.
When $\alpha < 2$, the random graph $G(n,m,P)$ whp contains
cliques of only logarithmic size. When $\alpha > 2$, it whp contains a `giant' clique of polynomial size.
But what happens when (\ref{eq.pldef2}) is satisfied with $\alpha = 2$ but the degree variance is unbounded?

We proposed a surprisingly simple algorithm for finding (almost) the largest clique in
sparse random intersection graphs with finite degree variance. The performance of both \textsc{Greedy-Clique}
and \textsc{Mono-Clique} algorithms can be of further interest, since these algorithms do not use the
possibly hidden random subset structure. How well would they perform on arbitrary sparse 
empirical networks? Can we suspect a hidden intersecting sets structure for networks where the \textsc{Mono-Clique} algorithm performs well?

Another direction of possible future research would be to determine the asymptotic clique number in dense random intersection graphs (alternatively, the order of the largest intersecting set in dense random hypergraphs).
For example, even in the random uniform hypergraph case where $m = \Theta(n)$ and the random subset size $X(n) = \Omega(n^{1/2})$ is deterministic, exact asymptotics of the clique number remain open. 

\begin{thebibliography}{44}
  \bibitem{alon03} N. Alon, T. Jiang, Z. Miller and D. Pritkin, 
Properly coloured subgraphs and rainbow subgraphs in edge-colourings with local constraints, 
\textit{Random Struct. Algorithms} \textbf{23} (2003), 409--433.

     \bibitem{bbm2009}  J. Balogh, T. Bohman and D. Mubayi,
 Erd\H{o}s - Ko - Rado in random hypergraphs. 
\textit{Combinatorics, Probability and Computing} 
\textbf{18} (2009), 629--646.

  \bibitem{btu09} M. Behrisch, A. Taraz and M. Ueckerdt, Colouring random intersection graphs and
      complex networks. \emph{SIAM J. Discrete Math.} \textbf{23} (2009), 288--299.

  \bibitem{bianc06} 
G. Bianconi and M. Marsili, 
Emergence of large cliques in random scale-free networks. 
\textit{Europhys. Lett.} 
\textbf{74} 
(2006), 
740--746.
  \bibitem{bloznelis12} 
M. Bloznelis, J. Jaworski and V. Kurauskas, 
Assortativity and clustering coefficient of sparse random intersection graphs, 
\textit{Electronic Journal of Probability}
\textbf{18}, No. 38
(2013), 
1--24.
  \bibitem{bloznelis11} 
M. Bloznelis, 
Degree and clustering coefficient in sparse random intersection graphs, 
\textit{Ann. Appl. Probab.}, 
\textbf{23}, No. 3,
(2013), 
1254--1289.
  \bibitem{bloznelis08} M. Bloznelis, Degree distribution of a typical vertex in a general random intersection graph, \textit{Lithuanian Math. J.} \textbf{48} (2008) 38--45.
\bibitem{extended} M. Bloznelis and V. Kurauskas,
Large cliques in sparse random intersection graphs (extended version), 2013,
\url{http://web.vu.lt/mif/v.kurauskas/files/2013/09/maxcliqueRIGext.pdf}.

  \bibitem{deijfen} M. Deijfen and W. Kets, Random intersection graphs with tunable degree distribution and clustering, \textit{Probab. Eng. Inf. Sci.} \textbf{23} (2009) 661--674.
  \bibitem{galambosseneta} J. Galambos and E. Seneta, Regularly varying sequences, \textit{Proc. Amer. Math. Soc.} \textbf{41}(1973) 110--116.
  \bibitem{gj03} E. Godehardt and J. Jaworski, Two models of random intersection graphs for classification. In: O. Optiz and M. Schwaiger, Editors, \textit{Studies in Classification, Data Analysis and Knowledge Organization} \textbf{22}, Springer, Berlin (2003), 67--82.
  \bibitem{jln10}  S. Janson, T. \L uczak and I. Norros, Large cliques in a power-law random graph, \textit{J. Appl. Probab.} \textbf{47} (2010), 1124--1135. 
  \bibitem{jlr} S. Janson, T. \L uczak, and A. Ruci\'nski, Random Graphs, \textit{Wiley-Interscience Series
      in Discrete Mathematics and Optimization}, Wiley-Interscience, New York, 2000.
  \bibitem{karonski99}  M. Karo\'nski, E.R. Scheinerman and K.B. Singer-Cohen, On random intersection graphs: the subgraph problem, \textit{Comb. Probab. Comput.} \textbf{8} (1999) 131--159.
  \bibitem{kolchin} V. Kolchin, B. Sevstyanov and V.Chistyakov, Random Allocations, \textit{V.H. Winston and Sons}, 1978, Washington D.C.

\bibitem{cmcd98} C. McDiarmid, Concentration, in \textit{Probabilistic Methods for Algorithmic Discrete Mathematics}, M. Habib, C. McDiarmid,
J. Ramirez-Alfonsin and B. Reed Eds., Springer, New York (1998) 195--248.
\bibitem{nrs2012} S. Nikoletseas, C. Raptopoulos and P. G. Spirakis, 
Maximum cliques in graphs with small intersection number and random intersection graphs, 
\textit{Mathematical Foundations of Computer Science 2012}, 
Springer Berlin Heidelberg, 2012. 728 -- 739.
 

  \bibitem{rybstark2010} K. Rybarczyk and D. Stark, Poisson approximation of the number of cliques in random intersection graphs, \emph{J. Appl. Probab.}, \textbf{47} (2010), 826--840.
\end{thebibliography}
\end{document}